\documentclass[11pt,reqno]{amsart}
\usepackage[utf8]{inputenc} 
\usepackage[T1]{fontenc}    
\usepackage[english]{babel} 
\usepackage{amsmath}
\usepackage{amsthm}
\usepackage{amsfonts}
\usepackage{amsbsy,amsfonts,amsmath,amssymb,amscd,amsthm}
\usepackage{mathrsfs,float,color}
\usepackage[mathcal]{euscript}
\usepackage{enumitem}
\usepackage{mathtools}
\usepackage[colorlinks=true,linkcolor=blue,citecolor=green]{hyperref}
\usepackage{tikz}
\usepackage{tikz-cd}
\usepackage{xcolor}

\numberwithin{equation}{section}

\usepackage[a4paper,text={6.1in,8in},centering,top=1.5in]{geometry}
\small\normalsize
\newtheorem{mainthm}{Theorem}
\newtheorem{theorem}{Theorem}[section] 
\newtheorem{proposition}[theorem]{Proposition}
\newtheorem{lemma}[theorem]{Lemma}
\newtheorem{corollary}[theorem]{Corollary}      
\newtheorem{remark}[theorem]{Remark}

\theoremstyle{plain}
\newtheorem{mainprop}{Proposition}
\newtheorem{maincor}[mainprop]{Corollary}

\newtheorem{lem}[theorem]{Lemma}

\newtheorem{prop}[theorem]{Proposition}

\theoremstyle{definition}

\newtheorem{rem}[theorem]{Remark}

\newcommand{\PP}{\mathrm{P}}

\newcommand{\cE}{\mathcal{E}}

\newcommand{\Prob}{\mathrm{Prob}}

\title{Integral representation of Lyapunov exponents}
\author[Barrientos]{Pablo G. Barrientos}
\address{\centerline{Instituto de Matem\'atica e Estat\'istica, UFF}
    \centerline{Niter\'oi,  Brazil}}
\email{pgbarrientos@id.uff.br}
\author[Nisoli]{Isaia Nisoli}
\address{\centerline{Instituto de Matem\'atica, Universidade Federal do Rio de Janeiro}
    \centerline{Av.\ Athos da Silveira Ramos 149, Rio de Janeiro, RJ 21941-909, Brazil}}\email{nisoli@im.ufrj.br}

\begin{document}

\begin{abstract}
We introduce a new operator-theoretic construction of Lyapunov growth for Markov-driven systems. 
The construction is based on an abstract variational principle for asymptotic growth rates arising from a subadditive process generated by an intertwining pair of Markov operators on a measurable bundle with compact fibers: for each invariant base measure, the fiberwise-maximal growth rate equals the supremum of the fiber integral over invariant lifts, and this supremum is attained on an ergodic lift. For random linear bundle morphisms driven by Markovian place-dependent noise, this yields Lyapunov exponents defined intrinsically from the initial law on the state-phase space.  We prove that these exponents coincide with the classical path-space skew-product Lyapunov spectrum, showing that the pointwise exponents depend only on the current noise state and initial position, not on the full noise realization.  We also obtain new asymptotic formulas for the sums of Lyapunov exponents as conditional annealed growth rates along individual directions.  As further consequences of this variational principle, we recover and extend to singular linear bundle morphisms the classical projective variational formulas.
 \end{abstract}

\maketitle

\thispagestyle{empty}
\section{Introduction}
\enlargethispage{1cm}

Lyapunov exponents quantify asymptotic growth rates and play a central role in smooth and random dynamics, probability, and applications.  In many settings, their existence follows from Kingman’s subadditive ergodic theorem~\cite{ruelle1979ergodic}, yet one often needs \emph{representation formulas} that express these exponents as integrals of explicit observables against invariant (or stationary) measures.  Classical instances include Furstenberg-Kifer formulas for random matrix products and the projective integral formulas for sums of Lyapunov exponents of cocycles~\cite{Fur:63,Kif:86}. These formulas provide variational principles, open the door to stability results via perturbation theory for operators, and suggest computational schemes. 

We prove an abstract operator-theoretic variational principle for asymptotic growth rates driven by Markov operators. The abstract framework consists of a measurable bundle \(\pi:\hat X\to X\) with compact fibers, a pair of Markov operators \(P\) on \(B(X)\) and \(\hat P\) on \(B(\hat X)\) intertwining through \(\pi\) via the commutative diagram
\[
\begin{CD}
B(X) @>{ P}>> B( X)\\
@V{\pi^*}VV @VV{\pi^*}V\\
B(\hat X) @>>{\hat P}> B( \hat X)
\end{CD}
\qquad\text{with}\qquad
\pi^*\phi:=\phi\circ\pi,
\]
where \(B(Z)\) denotes the Banach lattice of bounded Borel functions on a space \(Z\). Theorem~\ref{mainthm:A} shows that fiberwise maximization sends additive processes on \(\hat X\) to \(P\)-subadditive sequences on the base.  Theorem~\ref{thm:VP} proves that the resulting growth rate admits a variational representation:
for each \(P\)-invariant probability \(\mu\) on \(X\), the growth rate equals the supremum of \(\int \hat\phi_1\,d\hat\mu\) over all \(\hat P\)-invariant lifts \(\hat\mu\) of \(\mu\), and the supremum is attained on an ergodic lift.
Together with the Markov-operator versions of Kingman's theorem
and its uniform counterpart proved in~\cite{BM},
Theorem~\ref{thm:VP} may be viewed as a relative variational
principle for Lyapunov growth. In the classical relative
variational principle, one fixes an invariant measure on a factor
and optimizes entropy or pressure over its invariant
lifts~\cite{LW77}; here the base measure $\mu$ is fixed, while the
optimization is performed over $\hat P$-invariant lifts of $\mu$
and involves the integral of the fiber potential. When the base
measure is also allowed to vary, the resulting maximal-growth
problem is related, in the setting of matrix cocycles, to the
ergodic theory of joint spectral radii and their associated Mather
sets~\cite{Mor13}. The uniform and dual
counterparts developed in Appendices~\ref{appendix-A} and~\ref{app:dual-min-growth} extend this viewpoint,
respectively, to unconstrained maximal growth and to minimal
directional growth.


The first application developed in this paper concerns the Lyapunov spectra of finite-dimensional random linear bundle morphisms
\(F:T\times\mathcal E\to\mathcal E\) covering a random map
\(f:T\times X\to X\) driven by Markovian place-dependent noise.
Classically, one passes to the path space
\(\Omega=T^{\mathbb N}\) and studies the associated skew-product
cocycle with respect to a driving measure \(m\) on
\(\Omega\times X\). In this formulation, the pointwise Lyapunov
exponents \(\lambda_k(\omega,x)\) depend, a priori, on the entire
noise realization \(\omega=(\omega_i)_{i\geq0}\). We show that
passing to the path-space cocycle is not necessary to define the
spectrum. Instead, the Lyapunov exponents
\(\lambda_k(\omega_0,x)\) can be constructed intrinsically from an
invariant initial law \(\nu\) on the state-phase space
\(Z=T\times X\), using the Markov operator \(P\) on \(Z\) associated
with \(f\). The corresponding growth rate is described by the
abstract variational principle above. Theorem~\ref{mainthm:B} proves
that this intrinsic spectrum agrees with the classical path-space
spectrum:
\[
\lambda_k(\omega,x)=\lambda_k(\omega_0,x)
\quad\text{for $m$-a.e.~$(\omega,x)$}
\quad\text{and}\quad
\lambda_k(m)=\lambda_k(\nu),
\]
where
\[
\lambda_k(m):=\int\lambda_k(\omega,x)\,dm
\qquad\text{and}\qquad
\lambda_k(\nu):=\int\lambda_k(\omega_0,x)\,d\nu.
\]
Moreover, Corollary~\ref{maincor:B} gives the asymptotic
representation
\[
\sum_{i=1}^k \lambda_i(m)
=
\lim_{n\to\infty}
\sup_{[v]}
\frac{1}{n}
\mathbb{E}\bigg[
\log\frac{\bigl\|\wedge^k F^n_{(\omega,x)}v\bigr\|}{\|v\|}
\biggm|
(\omega_0,x,[v])
\bigg]
\quad\text{for $\nu$-a.e.~$(\omega_0,x)$},
\]
where the conditional expectation averages over the future noise
starting from the indicated lifted state. At finite time,
maximizing over directions and averaging over the noise need not
commute. The corollary shows that, after normalization, the two
procedures nevertheless have the same asymptotic growth rate. In
particular, the operator norm can be replaced by the averaged growth
along individual directions, producing a more explicit
representation that is well suited to both theoretical estimates
and computation. In the Bernoulli case, the dependence on the
current noise state also disappears, and one recovers the classical
\(\omega\)-independence of the Lyapunov spectrum
\cite{liu2006smooth}.


\subsection{Main abstract result}\label{ss:thmA-intro}

Let \(\pi:\hat X\to X\) be a \emph{standard Borel bundle with compact metric fibers}. This means that \(\pi\) is a measurable surjection between standard Borel spaces \(\hat X\) and \(X\), and there exist a compact metric space \(Y\), a Borel set \(\mathcal E\subset X\times Y\) with nonempty compact sections
$\mathcal E_x:=\{y\in Y:(x,y)\in\mathcal E\}$,
and a fiber-preserving Borel isomorphism \(\iota:\hat X\to\mathcal E\) such that, for every \(x\in X\), the restriction
\(
\iota:\pi^{-1}(x)\longrightarrow \mathcal E_x
\)
is a homeomorphism.

Let \(\mathscr C_b(\hat X)\) denote the space of \emph{bounded fiberwise continuous functions} on \(\hat X\), that is, bounded measurable functions \(\phi:\hat X\to\mathbb R\) whose restriction to each fiber \(\pi^{-1}(x)\) is continuous. We likewise consider \emph{fiberwise upper semicontinuous} functions, allowing values in \([-\infty,\infty)\). For \(u:\hat X\to[-\infty,\infty]\), define
\[
\mathcal M u(x):=\sup_{z\in\pi^{-1}(x)}u(z).
\]
If \(u\) is fiberwise upper semicontinuous, compactness of the fibers gives
\[
\mathcal M u(x)=\max_{z\in\pi^{-1}(x)}u(z)\in[-\infty,\infty).
\]
Let $B(Z)$ be the Banach lattice of bounded measurable functions on a standard Borel space $Z$. A linear operator $Q: B(Z) \to B(Z)$ is called \emph{$\sigma$-Markov} if 
\begin{enumerate}[label=(\roman*)]
  \item  $Q$ is \emph{Markov}: it is positive ($Q\varphi \geq 0$ if $\varphi \geq 0$) and $Q 1_Z = 1_Z$;
  \item $Q$ is \emph{$\sigma$-order continuous}: $\varphi_n \downarrow 0$ pointwise, then $Q\varphi_n \downarrow 0$.
\end{enumerate}
The $\sigma$-order continuity ensures that the Markov operator $Q$ is induced by a transition probability kernel; see Lemma~\ref{lem:kernel-representation}. By duality, $Q$ induces a linear operator $Q^*$ on the space of probability measures on $Z$. We write \(\mathcal I(Q)\) for the convex set of \(Q^*\)-invariant probability measures (i.e., $Q^*\mu = \mu$); its extreme points are the \emph{ergodic} measures.

A sequence \((\phi_n)_{n\ge1}\) of extended real-valued measurable functions (or equivalence classes of functions) on \(Z\) is called \emph{\(Q\)-subadditive} if
\begin{equation}\label{Qsub}
\phi_{n+m}\le \phi_n+Q^n\phi_m
\qquad\text{for all }m,n\ge1.
\end{equation}
We define \emph{\(Q\)-superadditivity} if $(-\phi_n)_{n\geq 1}$ is $Q$-subadditive 
and \emph{$Q$-additive} if equality holds in~\eqref{Qsub}.
For the following results,  we consider  \(\sigma\)-Markov operators  \(\hat P:B(\hat X)\to B(\hat X)\) and \(P:B(X)\to B(X)\). These operators also act on Borel measurable functions with values in $[-\infty,\infty)$; see Lemma~\ref{lem:extend-markov-univ}. 
For \(\mu\in\mathcal I(P)\), let \(\mathcal I_\mu(\hat P)\) denote the set of probability measures \(\hat\mu\) on \(\hat X\) such that \(\hat P^*\hat\mu=\hat\mu\) and \(\pi_*\hat\mu=\mu\). We call $\hat\mu\in\mathcal I_\mu(\hat P)$ an
\emph{ergodic lift of $\mu$} if it is an extreme point of the
convex set $\mathcal I_\mu(\hat P)$. We also write
\[
\pi^*:B(X)\to B(\hat X),\qquad \pi^*\varphi:=\varphi\circ\pi,
\]
for the pullback operator. This
operator is an isometric embedding of Banach lattices.

\begin{mainthm} \label{mainthm:A}
    Let $P: B(X) \to B(X)$ and $\hat P: B(\hat X) \to B(\hat X)$ be $\sigma$-Markov operators such that  $\hat P\circ \pi^* = \pi^* \circ P$. 
    If $(\hat \phi_n)_{n \ge 1}$ is a $\hat{P}$-additive sequence of measurable functions $\hat\phi_n: \hat X \to [-\infty,\infty)$, then the sequence $(\phi_n)_{n\geq 1}$  defined by $\phi_n \coloneqq \mathcal{M}\hat \phi_n$ is $P$-subadditive. 
\end{mainthm}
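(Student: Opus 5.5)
The argument is pointwise: fix $x\in X$ and $m,n\ge1$, start from additivity $\hat\phi_{n+m}=\hat\phi_n+\hat P^n\hat\phi_m$, and bound each term on $\pi^{-1}(x)$. The key inequality to prove is the domination
\[
\hat P^n u\le \pi^*\bigl(P^n\mathcal M u\bigr)\qquad\text{for measurable }u:\hat X\to[-\infty,\infty),
\]
applied with $u=\hat\phi_m$. With this in hand, for every $z\in\pi^{-1}(x)$,
\[
\hat\phi_{n+m}(z)=\hat\phi_n(z)+\hat P^n\hat\phi_m(z)\le \mathcal M\hat\phi_n(x)+P^n(\mathcal M\hat\phi_m)(x),
\]
and taking the supremum over $z\in\pi^{-1}(x)$ gives $\phi_{n+m}\le\phi_n+P^n\phi_m$. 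The conventions are $-\infty+a=-\infty$, and the terms are never $+\infty$ since the values lie in $[-\infty,\infty)$.

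To prove the domination, note that iterating $\hat P\pi^*=\pi^*P$ gives $\hat P^n\pi^*=\pi^*P^n$, so it suffices to treat $n=1$ and induct. Using monotonicity of $P$, one gets $\hat P^{n}u=\hat P(\hat P^{n-1}u)\le\hat P\pi^*(P^{n-1}\mathcal Mu)=\pi^*P^n\mathcal Mu$.

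For $n=1$, use the pointwise bound $u\le\pi^*\mathcal Mu$. Positivity of $\hat P$, extended to $[-\infty,\infty)$-valued functions via Lemma~\ref{lem:extend-markov-univ}, then gives $\hat Pu\le\hat P\pi^*\mathcal Mu=\pi^*P\mathcal Mu$. For bounded $u$ this is immediate from positivity and intertwining. For general $u$ there are two cases.
\begin{itemize}
\item If $\mathcal M u$ is bounded above, truncate from below: $u\vee(-k)$ is bounded and $\mathcal M(u\vee(-k))=(\mathcal Mu)\vee(-k)$. Letting $k\to\infty$ and using the monotone (downward) continuity built into the extension of the operators (the $\sigma$-order continuity), both sides decrease to the desired limits.
\item If $\mathcal M u$ can be large, also truncate above by $u\wedge k$ and pass to the increasing limit. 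The extension in Lemma~\ref{lem:extend-markov-univ} presumably defines $Q$ on such functions by monotone limits or via the kernel of Lemma~\ref{lem:kernel-representation}, so the inequality persists.
\end{itemize}

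The main obstacle is measurability of $\mathcal M\hat\phi_m$, which is needed for $P^n\mathcal M\hat\phi_m$ to make sense. A supremum over uncountable fibers of a Borel function is in general only universally measurable (it is analytic by projection). I expect Lemma~\ref{lem:extend-markov-univ} is exactly designed to let Markov operators act on universally measurable functions, via the kernel integral with respect to the completed measures $P(x,\cdot)$. So I will argue:
\begin{itemize}
\item $\{\mathcal Mu>c\}=\pi(\{u>c\})$ is analytic, hence universally measurable;
\item the inequalities above hold pointwise with the kernel integrals interpreted through that lemma.
\end{itemize}
If the sequence is only given as equivalence classes, the inequality is understood almost everywhere and the same argument applies on a full-measure set.
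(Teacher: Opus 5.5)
Your proposal is correct and follows the paper's route. Your domination inequality $\hat P^n u\le\pi^*(P^n\mathcal M u)$ is exactly Lemma~\ref{lem:domination} (equivalently $\mathcal M(\hat P^n u)\le P^n\mathcal M u$), obtained as in the paper from $u\le\pi^*\mathcal M u$, positivity, the intertwining extended to universally measurable functions (Remark~\ref{rem:extend-factor}), and truncation from above; your pointwise supremum over the fiber is just the subadditivity of $\mathcal M$ used there. The one thing the paper does more carefully is the order of operations: it runs the induction on the bounded-above truncations $\min\{u,N\}$, where iterated kernel integrals compose without integrability problems, and lets $N\to\infty$ only once at the end, which also makes your lower truncation $u\vee(-k)$ unnecessary.
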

Let \(Q\) be a Markov operator and consider $\nu \in \mathcal I(Q)$. If \(\Psi=(\psi_n)_{n\ge1}\) is a \(Q\)-subadditive sequence with \(\psi_1^+:=\max\{0,\psi_1\}\in L^1(\nu)\), using subadditive Fekete's lemma, we define its \emph{growth rate} with respect to \(\nu\in\mathcal I(Q)\) by
\[
\Lambda(\nu;Q,\Psi)
\coloneqq
\lim_{n\to\infty}\frac1n\int \psi_n\,d\nu  
=
\inf_{n\ge1}\frac1n\int \psi_n\,d\nu\in [-\infty,\infty).
\]
If \(\Psi\) is \(Q\)-additive, then
\[
\Lambda(\nu;Q,\Psi)=\int \psi_1\,d\nu
\]
so the growth rate reduces to the spatial average of the generator. In general, for a merely subadditive process, \(\Lambda(\nu;Q,\Psi)\) should be viewed as the asymptotic finite-time spatial growth, and hence as an abstract Lyapunov exponent. Moreover, if \(\nu\) is ergodic, then the Kingman theorem for Markov operators~\cite[Thm.~D]{BM} yields
\begin{equation} \label{eq:BM}
\Lambda(\nu;Q,\Psi)=\lim_{n\to\infty}\frac1n\psi_n \in [-\infty,\infty)
\qquad\text{for \(\nu\)-a.e.}
\end{equation}
As a consequence of Theorem~\ref{mainthm:A}, given $\mu \in \mathcal{I}(P)$,  one can introduce the growth rate $\Lambda(\mu):=\Lambda(\mu;P,\Phi)$ for the fiberwise-maximum sequence $\Phi=(\phi_n)_{n\ge 1}$. The following result gives a variational principle for this quantity with the convention that $\sup \emptyset = \max \emptyset = -\infty$. 

\begin{mainthm}\label{thm:VP}
Under the assumptions of Theorem~\ref{mainthm:A}, let \(\hat\phi_1:\hat X\to[-\infty,\infty)\) be a fiberwise upper semicontinuous function and define 
\[
\hat\phi_n:=\sum_{k=0}^{n-1}\hat P^k\hat\phi_1
\qquad \text{and} \qquad
\phi_n:=\mathcal M\hat\phi_n \qquad \text{for $n\geq 1$}.
\]
Let \(\mu\in\mathcal I(P)\) be such that 
$ \phi_1^+:=\max\{0,\phi_1\}\in L^1(\mu)$.  
Assume that there is a Borel set \(K\subset\hat X\) such that, for every \(\psi\in\mathscr C_b(\hat X)\),
\begin{enumerate}
\item[(i)] every fiberwise discontinuity point of \(\hat P\psi\) belongs to \(K\);
\item[(ii)] \(\hat\phi_1(z)=-\infty\) for every \(z\in K\).
\end{enumerate}
Then
\[
\Lambda(\mu) 
=
\sup_{\hat\mu\in\mathcal I_\mu(\hat P)}\int\hat\phi_1\,d\hat\mu
=
\max\left\{
\int\hat\phi_1\,d\hat\mu:
\hat\mu\in\mathcal I_\mu(\hat P)\ \text{ergodic}
\right\}.
\]
Moreover, if \(\Lambda(\mu)>-\infty\), every maximizing lift \(\hat\mu\in\mathcal I_\mu(\hat P)\) satisfies \mbox{\(\hat\mu(K)=0\).}
\end{mainthm}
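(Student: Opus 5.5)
The proof has two inequalities: an easy lower bound, and an upper bound obtained by building an invariant lift from fiberwise maximizers.

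\emph{Lower bound.} Fix any $\hat\mu\in\mathcal I_\mu(\hat P)$. Since $\hat\phi_n\le \pi^*\mathcal M\hat\phi_n=\pi^*\phi_n$ pointwise and $\pi_*\hat\mu=\mu$, we get
\[
n\int\hat\phi_1\,d\hat\mu=\int\hat\phi_n\,d\hat\mu\le\int\phi_n\,d\mu .
\]
The equality uses $\hat P^*$-invariance. It requires $\hat\phi_1^+\in L^1(\hat\mu)$, which follows from $\hat\phi_1^+\le\pi^*\phi_1^+$. Dividing by $n$ and letting $n\to\infty$ gives $\sup_{\hat\mu}\int\hat\phi_1\,d\hat\mu\le\Lambda(\mu)$.

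\emph{Upper bound.} We may assume $\Lambda(\mu)>-\infty$; otherwise all three quantities equal $-\infty$ or are $\ge$ an empty supremum, with the conventions. By fiberwise upper semicontinuity of $\hat\phi_n$ (which should follow from (i)--(ii), see below) and compact fibers, a measurable selection theorem (Kuratowski--Ryll-Nardzewski, applied to the Borel set of fiberwise maximizers in $\mathcal E$) gives Borel maps $s_n:X\to\hat X$ with $\pi\circ s_n=\mathrm{id}$ and $\hat\phi_n(s_n(x))=\phi_n(x)$. Set $\hat\nu_n:=(s_n)_*\mu$ and form the Cesàro averages $\hat\mu_n:=\frac1n\sum_{k=0}^{n-1}(\hat P^*)^k\hat\nu_n$. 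Since $\pi_*\hat P^*=P^*\pi_*$ (dual to the intertwining) and $\mu$ is $P^*$-invariant, every $\hat\mu_n$ projects to $\mu$. Moreover
\[
\int\hat\phi_1\,d\hat\mu_n=\frac1n\int\hat\phi_n\,d\hat\nu_n=\frac1n\int\phi_n\,d\mu\ge\Lambda(\mu).
\]
The set of probabilities on $\hat X$ projecting to $\mu$ is compact in the topology generated by $\mathscr C_b(\hat X)$. This is the standard compactness of Young measures / fibered narrow topology, using the embedding $\iota$ into $X\times Y$ with $Y$ compact. So a subnet converges to some $\hat\mu$ with $\pi_*\hat\mu=\mu$.

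\emph{Invariance and the main obstacle.} We must show $\hat P^*\hat\mu=\hat\mu$ and $\int\hat\phi_1\,d\hat\mu\ge\Lambda(\mu)$. The second is upper semicontinuity of $\nu\mapsto\int\hat\phi_1\,d\nu$ in the fibered topology. It holds because $\hat\phi_1$ is fiberwise usc and bounded above by the integrable $\pi^*\phi_1^+$: approximate it from above by a decreasing sequence in $\mathscr C_b$ truncated below, then apply monotone convergence. Invariance is the main obstacle, since $\hat P$ need not preserve $\mathscr C_b(\hat X)$. For $\psi\in\mathscr C_b$ we have $|\int\psi\,d\hat P^*\hat\mu_n-\int\psi\,d\hat\mu_n|\le 2\|\psi\|/n$, but passing $\int\hat P\psi\,d\hat\mu_n\to\int\hat P\psi\,d\hat\mu$ to the limit requires $\hat P\psi$ to be fiberwise continuous $\hat\mu$-a.e. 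By (i) this holds off $K$. So the key is to show $\hat\mu_n(K)$-mass does not survive in the limit. Since $\hat\phi_1=-\infty$ on $K$ by (ii), I will use that $\int\hat\phi_1\,d\hat\mu_n\ge\Lambda(\mu)>-\infty$ uniformly. Truncating, $\int\max(\hat\phi_1,-M)\,d\hat\mu_n\ge\Lambda(\mu)$ for all $M$. Combining this with upper semicontinuity of $\hat\phi_1$ near $K$ bounds the mass near $K$ by $O(1/M)$ uniformly in $n$. This is the delicate point. I would handle it via a fiberwise usc function $g\ge0$ equal to $\infty$ on $K$, or alternatively by a generalized portmanteau argument for functions continuous off a $\hat\mu$-null set, after first proving $\hat\mu(K)=0$ from $\int\hat\phi_1\,d\hat\mu>-\infty$. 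The latter route seems cleanest. First use usc to get $\int\hat\phi_1\,d\hat\mu\ge\Lambda(\mu)>-\infty$, hence $\hat\mu(K)=0$. Then apply the continuous-off-null-set convergence to $\hat P\psi$. This yields $\hat\mu\in\mathcal I_\mu(\hat P)$ as a maximizer. The same argument shows every maximizing lift has $\hat\mu(K)=0$.

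\emph{Ergodic attainment.} The functional $\hat\mu\mapsto\int\hat\phi_1\,d\hat\mu$ is affine and usc on the compact convex set $\mathcal I_\mu(\hat P)$, which is closed by the previous argument restricted to the face where the integral is $>-\infty$. Its maximizers therefore form a nonempty compact face. By Krein--Milman this face has an extreme point, which is extreme in $\mathcal I_\mu(\hat P)$, i.e.\ an ergodic lift. Finally, the fiberwise usc of $\hat\phi_n=\sum_k\hat P^k\hat\phi_1$ used in the selection step should itself come from (i)--(ii). Approximating $\hat\phi_1$ from above by $\mathscr C_b$ functions, $\hat P$ preserves fiberwise usc at points off $K$. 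Where discontinuities occur, the $-\infty$ values on $K$ make usc automatic. If this fails, I would instead select $\varepsilon$-maximizers, which only needs measurability and suffices for the Cesàro argument.
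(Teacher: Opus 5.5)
Your proposal is correct and follows essentially the same route as the paper. That route is: the bound $\int\hat\phi_1\,d\hat\mu\le\Lambda(\mu)$ from $\hat\phi_n\le\pi^*\phi_n$; Ces\`aro averages of pushforwards of $\mu$ under fiberwise maximizing sections; Young-measure compactness and upper semicontinuity of $\nu\mapsto\int\hat\phi_1\,d\nu$; then $\hat\nu(K)=0$ from finiteness, so that invariance passes to the limit via convergence for integrands fiberwise continuous off a null set; and finally an extreme point of the compact convex face of maximizers.

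Your fallback to $\varepsilon$-maximizing sections is the safer choice. Exact maximizers require fiberwise upper semicontinuity of $\hat\phi_n$ for $n\ge2$, which the paper uses tacitly and your sketch does not establish: hypothesis (i) says nothing about $\hat P^k\psi$ for $k\ge2$, and such functions can be discontinuous off $K$.
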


The intertwining condition between $P$ and $\hat P$ admits an intrinsic reformulation. Let 
\begin{equation}\label{eq:set-F}
\mathcal B_\pi
\coloneqq
\{u\in B(\hat X):u(z)=u(z') \text{ whenever } \pi(z)=\pi(z')\}
\end{equation}
be the closed subspace of bounded \(\pi\)-fiberwise constant functions. The pullback \(\pi^*\) identifies \(B(X)\) isometrically, and as a lattice, with \(\mathcal B_\pi\). By Lemma~\ref{lem:markov-factorization}, a \(\sigma\)-Markov operator \(\hat P:B(\hat X)\to B(\hat X)\) satisfies
\(
\hat P(\mathcal B_\pi)\subset \mathcal B_\pi
\)
if and only if there exists a unique \(\sigma\)-Markov operator \(P:B(X)\to B(X)\) such that
\(
\hat P\circ \pi^*=\pi^*\circ P.
\)
In that case,
$P=(\pi^*)^{-1}\circ \hat P\circ \pi^*$.
This intrinsic reformulation makes Theorem~\ref{mainthm:A} applicable directly to any bundle Markov operator that preserves fiberwise constant functions. In the next subsection, we apply it to projective Markov operators associated with the exterior power of random linear bundle morphisms. 

\subsection{Random dynamics and bundle morphisms}\label{s:random-morphisms}
Throughout, let \((X,\mathscr B)\) be a standard measurable space, called the \emph{phase space}. 
A discrete random dynamical system is determined by a measurable map $f:T\times X\to X$,
where \((T,\mathscr A)\) is a standard Borel space,  called the \emph{state space}. Given \(\omega=(\omega_i)_{i\ge0}\in\Omega:=T^{\mathbb N}\), we write
\[
f_\omega^0=\mathrm{id},
\qquad
f_\omega^n=f_{\omega_{n-1}}\circ\cdots\circ f_{\omega_0},
\qquad n\ge1,
\]
where \(f_t:=f(t,\cdot)\).

\subsubsection{Noise}
Randomness is specified by an ergodic shift-invariant probability measure \(\mathbb P\) on \(\Omega\). The basic examples are ergodic Markov measures. Such a measure is determined by a transition probability kernel \(Q(t,A)\) on \(T\times\mathscr A\) and an ergodic stationary law \(p\). In particular, Bernoulli measures \(\mathbb P=p^{\mathbb N}\) correspond to  \(Q(t,A)=p(A)\) for every \(t\in T\) and \(A\in\mathscr A\). In all these cases, the noise is \emph{place-independent}: the law of the next state \(\omega_n\) depends at most on the previous state \(\omega_{n-1}\), and not on the current position in \(X\).


A more general situation arises when the noise depends on the current position in phase space. Given \(x\in X\), define
\[
x_0=x,
\qquad
x_n=f_{\omega_{n-1}}(x_{n-1})=f_\omega^n(x),
\qquad n\ge1.
\]
In an \emph{iid place-dependent random iteration}, one has $x_0\sim \mu$ and $\omega_n\sim p_{x_n}(\cdot)$, 
where \(x\mapsto p_x\) is a measurable family of probability measures on \(T\), and \(\mu\) is an initial law on \(X\). More generally, a \emph{Markov place-dependent random iteration} is given by
\begin{equation}\label{eq:kernel-q}
(\omega_0,x_0)\sim \nu,
\qquad
\omega_n\sim Q_{x_n}(\omega_{n-1},\cdot),
\end{equation}
where \(Q_x(t,A)\) is a measurable family of transition probability kernels on \(T\times\mathscr A\), indexed by \(x\in X\), and \(\nu\) is an initial law on \(T\times X\). The iid place-dependent case is recovered by taking \(d\nu=p_x\,d\mu(x)\) and \(Q_x(t,\cdot)=p_x(\cdot)\).

\subsubsection{Driving measure}
We consider the \emph{one-step skew-product}
\[
\mathcal F:\Omega\times X\to\Omega\times X,
\qquad
\mathcal F(\omega,x)=(\sigma(\omega),f_{\omega_0}(x)),
\]
where \(\sigma\) is the shift on \(\Omega\). The orbit \(x_n=f_\omega^n(x)\) is precisely the fiberwise orbit of \((\omega,x)\) under \(\mathcal F\). The relevant invariant objects are ergodic \(\mathcal F\)-invariant probability measures \(m\) on \(\Omega\times X\). For Bernoulli random maps, these measures are product measures \(m=\mathbb P\times\mu\), where \(\mathbb P\) is Bernoulli. For Markovian random maps, the first marginal of \(m\) on \(\Omega\) is the corresponding Markov measure \(\mathbb P\), and one has a disintegration
$dm=\mu_{\omega_0}\,d\mathbb P(\omega)$, 
where the fiber measures depend only on the first coordinate \(\omega_0\); see~\cite{BMNNT}. In both cases, the initial law on~\(T\times X\)~is
\[
\nu=(\pi_0)_*m,
\qquad
\pi_0(\omega,x)=(\omega_0,x).
\]

For place-dependent random iterations, the construction of \(m\) requires an auxiliary Markov chain on the extended space $(Z,\mathscr{C})=(T\times X,\mathscr{A}\otimes \mathscr{B})$. Define
\begin{equation}\label{eq:barf}
\bar f:T\times Z\to Z,
\qquad
\bar f_s(t,x)=(s,f_t(x)).
\end{equation}
Let \(\tilde{\mathbb P}\) be the Markov measure on \(\tilde\Omega:=Z^{\mathbb N}\), determined by the transition kernel
\begin{equation}\label{eq:Markov-chain}
q(z,C)=\int 1_C\circ \bar{f}_s(t,x) \,Q_{f_t(x)}(t,ds),
\qquad z=(t,x)\in Z,
\end{equation}
on $Z\times \mathscr{C}$ and the stationary law \(\nu\). We then define the \emph{driving measure}
$$m=\tilde{\pi}_*\tilde{\mathbb{P}} \ \ \text{on $\Omega \times X$} \ \ \ \text{where \ \
$\tilde{\pi}: \tilde{\Omega}\to \Omega \times X$, \ \  $\tilde{\pi}((t_n,x_n)_{n\geq 0})=((t_n)_{n\geq 0},x_0)$.}
$$
Moreover, by construction, $\nu=(\pi_0)_*m$. 

\subsubsection{Random linear bundle morphisms}
Let \(\pi:\mathcal E\to X\) be a measurable rank-\(d\) vector bundle with a measurable Euclidean norm \(\|\cdot\|\), and for \(k=1,\dots,d\), write \(\wedge^k\mathcal E\) for its \(k\)-th exterior power bundle. A measurable map
$F:T\times\mathcal E\to\mathcal E$
covering \(f\) is called a \emph{random linear bundle morphism} if, for each \(t\in T\), the map \(F_t:\mathcal E\to\mathcal E\) is a linear bundle morphism covering \(f_t\). Equivalently,
\[
F_{t,x}:\mathcal E_x\to\mathcal E_{f_t(x)}
\quad\text{is linear}
\qquad\text{and}\qquad
\pi\circ F_t=f_t\circ\pi.
\]

To introduce Lyapunov exponents, we fix an invariant probability
measure on the base. Ergodicity is needed only when one wants the
pointwise exponents to be almost surely constant. There are two
equivalent approaches:
\begin{enumerate}[leftmargin=1cm]
\item[(a)] by using the initial law \(\nu\) on \(T\times X\);
\item[(b)] by passing to the skew-product \(\mathcal F\) and using the driving measure \(m\) on \(\Omega\times X\).
\end{enumerate}
The classical approach (b) proceeds by extending the bundle to \(\bar\pi:\bar{\mathcal E}\to \Omega\times X\), where \(\bar{\mathcal E}:=\Omega\times\mathcal E\), and reinterpreting \(F\) as a bundle morphism covering \(\mathcal F\) via  \(F_{(\omega,x)}:=F_{\omega_0,x}\). Its iterates are
\[
F^n_{(\omega,x)}=
F_{\mathcal F^{n-1}(\omega,x)}\circ\cdots\circ F_{(\omega,x)},
\qquad n\ge1,
\]
and we assume
\begin{equation}\label{eq:integrability-m}
\int \log^+\|F_{(\omega,x)}\|\,dm<\infty.
\end{equation}
Assume first that \(m\) is ergodic. Following~\cite{ruelle1979ergodic}, a standard consequence of Kingman's subadditive ergodic theorem is the existence of numbers 
\begin{equation} \label{eq:lypunov-exponents}
        \infty>\lambda_1(m) \ge \lambda_2(m) \ge \dots \ge \lambda_d(m) \ge -\infty,
\end{equation}
called the \emph{Lyapunov exponents} of $m$, such that for every $k=1,\dots,d$ and $m$-a.e.~$(\omega,x)\in \Omega \times X$
\[
    \lambda_k(m) = \lim_{n \to \infty} \frac{1}{n} \log \sigma_k(F^n_{(\omega,x)}) 
    \quad \text{and} \quad 
    \sum_{i=1}^k \lambda_i(m) = \lim_{n \to \infty} \frac{1}{n} \log \|\wedge^k F^n_{(\omega,x)}\|,
\]
where $\sigma_k(L)$ denotes the $k$-th singular value of a linear map $L$. Furthermore, the sum of exponents satisfies 
\[
   \sum_{i=1}^k \lambda_i(m) = \lim_{n \to \infty} \frac{1}{n} \int \log \|\wedge^k F^n_{(\omega,x)}\| \, dm = \inf_{n \ge 1} \frac{1}{n} \int \log \|\wedge^k F^n_{(\omega,x)}\| \, dm.
\] 
For the pointwise Lyapunov spectrum in the non-ergodic case, see Appendix~\ref{ss:lyapunov-spectrum}.
The drawback of this approach is that it introduces the auxiliary path space \(\Omega\). In particular, without ergodicity one obtains pointwise exponents \(\lambda_k(\omega,x)\), which a priori depend on the full path~\(\omega\). However, for Bernoulli random maps, this dependence is known to disappear~\cite[Thm.~3.2]{liu2006smooth}. 

 Approach (a) works directly on the state-phase space \(Z=T\times X\). 
This approach avoids the auxiliary path space and directly shows, in the non-ergodic case, that the pointwise exponents depend only on the current noise state and position, not on the full path \(\omega\).
Below, we describe this construction in detail.

\subsubsection{Lyapunov exponents}\label{ss:spectrum-lyapunov-random}
Associated with the Markov place-dependent random iteration of \(f\), or equivalently with \(\bar f\) in~\eqref{eq:barf}, we have 
\begin{equation}\label{eq:fully-place-P}
P\varphi(t,x)
=
\int \varphi\bigl(s,f_t(x)\bigr)\,Q_{f_t(x)}(t,ds), \ \  (t,x)\in T\times X, \  \varphi \in B(Z).
\end{equation}
We assume that the initial law \(\nu\) is \(P^*\)-invariant, and that
\begin{equation}\label{eq:integrability-nu}
\int \log^+\|F_{t,x}\|\,d\nu<\infty.
\end{equation}
Since \(\nu=(\pi_0)_*m\) and \(F_{(\omega,x)}=F_{t,x}\) when \(\omega_0=t\), this is equivalent to~\eqref{eq:integrability-m}.

As an application of Theorem~\ref{mainthm:A} and Kingman’s subadditive theorem for Markov operators~\cite[Thm.~D]{BM} we give the following: 
\begin{mainprop} \label{mainprop:kingman1}
For every $k=1,\dots,d$, there is a $P$-invariant function $\Lambda_k$ with $\Lambda_k^+ \in L^1(\nu)$,
\[
\Lambda_k(\omega_0,x)=\lim_{n\to\infty}
\sup_{[v]\in\PP(\wedge^k\mathcal E_x)}
\frac1n
\mathbb E\biggl[
\log\frac{\|\wedge^kF^n_{(\omega,x)}v\|}{\|v\|}
\,\biggm|\,
(\omega_0,x,[v])
\biggr]
\quad\text{for \(\nu\)-a.e.\ \((\omega_0,x)\)}
\]
and 
\[
\Lambda_k(\nu):=\int \Lambda_k \, d\nu = \lim_{n\to \infty} \frac1n\int \sup_{[v]\in\PP(\wedge^k\mathcal E_x)}
\mathbb E\biggl[
\log\frac{\|\wedge^kF^n_{(\omega,x)}v\|}{\|v\|}
\,\biggm|\,
(\omega_0,x,[v])
\biggr] \, d\nu.
\]
Moreover, if $\nu$ is ergodic, then $\Lambda_k(\nu)=\Lambda_k(\omega_0,x)$ for $\nu$-a.e.~$(\omega_0,x)$.
\end{mainprop}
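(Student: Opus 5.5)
The plan is to realize the conditional expectation as the iterates of a projective Markov operator and then apply Theorem~\ref{mainthm:A} together with the Markov-operator Kingman theorem.

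\textbf{Step 1: the lifted chain.} Set $\hat X:=\{(t,x,[v]): (t,x)\in Z,\ [v]\in\PP(\wedge^k\mathcal E_x)\}$. This is a standard Borel bundle over $Z$ with compact fibers, since a measurable trivialization of $\mathcal E$ gives fibers homeomorphic to a fixed Grassmannian-type projective space. On $\hat X$ define
\[
\hat P\psi(t,x,[v])=\int \psi\bigl(s,f_t(x),[\wedge^kF_{t,x}v]\bigr)\,Q_{f_t(x)}(t,ds).
\]
When $\wedge^kF_{t,x}v=0$ the direction is undefined, so we send it to a fixed measurable section. This makes $\hat P$ a $\sigma$-Markov operator, by dominated convergence in the kernel. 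Since $\hat P$ sends functions independent of $[v]$ to $P$ of them, we have $\hat P\circ\pi^*=\pi^*\circ P$.

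\textbf{Step 2: an additive sequence.} Put
\[
\hat\phi_1(t,x,[v])=\log\bigl(\|\wedge^kF_{t,x}v\|/\|v\|\bigr)\in[-\infty,\infty)
\qquad\text{and}\qquad
\hat\phi_n=\sum_{j<n}\hat P^j\hat\phi_1 .
\]
The operators act on such functions by Lemma~\ref{lem:extend-markov-univ}; we use $\hat\phi_1\le\log^+\|F_{t,x}\|$ to control the positive parts. By the Markov property of the chain $(t_n,x_n)$ from~\eqref{eq:Markov-chain}, the cocycle identity
\[
\log\|\wedge^kF^n v\|=\sum_{j<n}\hat\phi_1\bigl(t_j,x_j,[\wedge^kF^jv]\bigr)
\]
holds, with the convention that everything after a vanishing step is $-\infty$. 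Hence
\[
\hat\phi_n(t,x,[v])=\mathbb E\bigl[\log(\|\wedge^kF^n_{(\omega,x)}v\|/\|v\|)\,\big|\,(\omega_0,x,[v])\bigr].
\]
This identification is the one delicate bookkeeping point. I would check it by induction on $n$ using $\hat\phi_{n+1}=\hat\phi_1+\hat P\hat\phi_n$ and the kernel description of $\hat P^n$ from Lemma~\ref{lem:kernel-representation}. The sequence $(\hat\phi_n)$ is $\hat P$-additive by construction.

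\textbf{Step 3: subadditivity and Kingman.} Theorem~\ref{mainthm:A} shows that $\phi_n:=\mathcal M\hat\phi_n$ is $P$-subadditive, and $\phi_n$ is exactly the supremum over $[v]$ appearing in the statement. The integrability follows from $\phi_1\le\log^+\|F_{t,x}\|$, which gives $\phi_1^+\in L^1(\nu)$ by~\eqref{eq:integrability-nu}. The measurability of $\phi_n$ is the point I expect to be the main obstacle. Fiberwise upper semicontinuity of $\hat\phi_n$ holds because $\hat\phi_1$ is upper semicontinuous in $[v]$ and the relevant kernels are continuous in $[v]$ away from the kernel of $\wedge^kF$. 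Combining this with a measurable-selection or countable-dense-supremum argument in a trivialization should make $\mathcal M\hat\phi_n$ Borel, or at least universally measurable, which suffices $\nu$-a.e.

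With this in hand, the Kingman theorem for Markov operators~\cite[Thm.~D]{BM} applied to $(\phi_n)$ and the $P^*$-invariant measure $\nu$ gives a $P$-invariant function $\Lambda_k=\lim\phi_n/n$ $\nu$-a.e. It also gives $\Lambda_k^+\in L^1(\nu)$ and $\int\Lambda_k\,d\nu=\lim\frac1n\int\phi_n\,d\nu$, which is the second displayed formula. When $\nu$ is ergodic, $P$-invariant functions are $\nu$-a.e.\ constant, so $\Lambda_k\equiv\Lambda_k(\nu)$; this is also~\eqref{eq:BM}.
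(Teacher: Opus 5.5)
Your proposal is correct and follows the paper's own proof: the factorization $\hat P_k\circ\pi_k^*=\pi_k^*\circ P$, then the telescoping identification of $\hat\phi_{k,n}$ with the conditional expectation (Lemma~\ref{lem:finite-horizon-expansion}), then Theorem~\ref{mainthm:A} and Kingman's theorem for Markov operators. Sending degenerate directions to a fixed measurable section instead of the paper's cemetery is harmless here, since the telescoped sum is already $-\infty$ at the degenerate step and no term can be $+\infty$.

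Two small corrections. First, the bound should read $\phi_1\le\log\|\wedge^kF_{t,x}\|\le k\log^+\|F_{t,x}\|$; the factor $k$ is needed. Second, measurability of $\mathcal M\hat\phi_n$ needs no fiberwise semicontinuity, and a countable-dense-supremum argument would in fact miss maxima of upper semicontinuous functions. The function $\mathcal M\hat\phi_n$ is universally measurable because its superlevel sets are projections of Borel sets (Lemma~\ref{lem:propiedades-M}), and Lemma~\ref{lem:extend-markov-univ} lets $P$ act on such functions.
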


We define the \emph{pointwise Lyapunov exponents} of \(F\) with respect to \(\nu\) by setting \(\Lambda_0(t,x)=0\) and defining for each \(k=1,\dots,d\), 
\begin{equation} \label{eq:def-Lyapunov-law}
\begin{aligned}
\lambda_k(t,x) &\coloneqq \Lambda_k(t,x)-\Lambda_{k-1}(t,x)
\ \ \text{if }\Lambda_{k-1}(t,x)>-\infty,  \quad 
\lambda_k(t,x)\coloneqq -\infty
\ \ \text{otherwise.}
\end{aligned}
\end{equation}
Consequently,
\[
\Lambda_k(t,x)=\sum_{i=1}^k\lambda_i(t,x),
\qquad
\lambda_k(\nu):=\int \lambda_k(t,x)\,d\nu, 
\qquad 
\Lambda_k(\nu)=\sum_{i=1}^k\lambda_i(\nu).
\]

Next step is to show that these new approach to introduce Lyapunov exponents with respect to the initial law of a random linear bundle morphisms rescats the classical Lyapunov exponents using the driving measure.

\subsubsection{Representation and variational formula}
In the invertible case, the induced cocycle on the projective bundle associated with the \(k\)-th exterior power is globally defined, and the corresponding logarithmic potential is finite. The singular case is more delicate: kernel directions admit no projective image, and the associated logarithmic potential becomes singular on the degenerate locus. We resolve this by adjoining a \emph{cemetery section} and extending the induced projective dynamics by sending every degenerate direction to this equivariant section. More precisely, let $\hat{\mathcal E}_k$ be the projective bundle \(\PP(\wedge^k\mathcal E)\) with a \emph{cemetery section}  \(\Delta=\sqcup_{x\in X} \Delta_x \) adjoined when $F$ is singular. We denote by $\pi_k:\hat{\mathcal E}_k \to X $ the bundle projection.  We write
$$ \hat Z_k:=T\times \hat{\mathcal E}_k, \qquad
\pi_k(t,x,[v])=(t,x).$$
Define the random projective bundle map
$\hat F_k:T\times \hat Z_k\to \hat Z_k$
covering \(\bar f\) by
\begin{equation} \label{eq:random-bundle-map}
\hat F_{k,s}(t,x,[v])=(s,f_t(x),[\wedge^kF_{t,x}v]),
\qquad s\in T,
\end{equation}
with  
$[\wedge^kF_{t,x}v]=\Delta_{f_t(x)}$ if $v\in \mathrm{ker}(\wedge^kF_{t,x})$ or $[v]=\Delta_x$.  The set of points \((t,x,[v])\) such that \(\wedge^kF_{t,x} v=0\), together with the cemetery section \(T\times \Delta\) are called \emph{degeneracy locus} of $\hat F_k$.
Associated with the Markov place-dependent random iteration of \(\hat F_k\), we obtain the Markov operator \(\hat P_k:B(\hat Z_k)\to B(\hat Z_k)\),
\begin{equation}\label{eq:fully-place-Phat}
\hat P_k\psi(t,x,[v])
=
\int \psi\bigl(\hat F_{k,s}(t,x,[v])\bigr)
\,Q_{f_t(x)}(t,ds).
\end{equation}
We also define the potential
\begin{equation}\label{eq:fully-place-phihat}
\hat\phi_k(t,x,[v])
=
\log\frac{\|\wedge^kF_{t,x}v\|}{\|v\|}
\in[-\infty,\infty),
\end{equation}
with the convention that \(\hat\phi_k(t,x,[v])=-\infty\) if \(v\in\ker(\wedge^kF_{t,x})\) or \([v]=\Delta_x\). 

\begin{mainthm}\label{mainthm:B}
Let \(F:T\times\mathcal E\to\mathcal E\) be a random linear bundle morphism covering a random map \(f:T\times X\to X\). Consider a measurable family \(Q_x(t,A)\), $x\in X$,  of transition probability kernels on \(T\times\mathscr A\), and let \(\nu\) be a \(P^*\)-invariant probability measure on \(T\times X\) satisfying~\eqref{eq:integrability-nu}, where \(P\) is defined by~\eqref{eq:fully-place-P}. If \(m\) is the induced driving measure on \(\Omega\times X\), then
\[
\lambda_k(\omega,x)=\lambda_k(\omega_0,x)
\quad\text{for \(m\)-a.e.\ \((\omega,x)\),}
\quad
\lambda_k(m)=\lambda_k(\nu),
\quad
k=1,\dots,d,
\]
and
\[
\sum_{i=1}^k\lambda_i(m) =\Lambda_k(\nu)
=
\sup\biggl\{
\int 
\hat\phi_k
\,d\hat\nu
:\ \hat\nu\in\mathcal I(\hat P_k),\ (\pi_k)_*\hat\nu=\nu
\biggr\},
\]
where $\hat P_k$ and $\hat \phi_k$ are defined in~\eqref{eq:fully-place-Phat} and~\eqref{eq:fully-place-phihat}, respectively. Moreover, the supremum is attained on an ergodic \((\hat P_k)^*\)-invariant lift of \(\nu\), and any maximizing measure has zero mass on the degeneracy locus whenever \(\lambda_k(m)>-\infty\).
\end{mainthm}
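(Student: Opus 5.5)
Our plan has two parts. First, we obtain the variational formula by applying Theorem~\ref{thm:VP} to the bundle $\pi_k:\hat Z_k\to Z$, with $P$ from~\eqref{eq:fully-place-P} and $\hat P_k$ from~\eqref{eq:fully-place-Phat}. Second, we identify the resulting growth rate with the path-space exponents of $m$.

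\emph{Setting up Theorem~\ref{thm:VP}.} We check the following points in order.
\begin{enumerate}
\item[(1)] $\hat Z_k$ is a standard Borel bundle with compact fibers $\PP(\wedge^k\mathcal E_x)\sqcup\Delta_x$. Use a measurable orthonormal frame to trivialize it.
\item[(2)] The operators intertwine: $\hat P_k\circ\pi_k^*=\pi_k^*\circ P$. This holds because $\hat F_{k,s}$ covers $\bar f_s$.
\item[(3)] $\hat\phi_k$ is fiberwise upper semicontinuous. Indeed, $[v]\mapsto \log\|\wedge^kF_{t,x}v\|/\|v\|$ is continuous with values in $[-\infty,\infty)$, and $\Delta_x$ is an isolated point carrying the value $-\infty$.
\item[(4)] Take $K$ to be the degeneracy locus. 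Off $K$, the map $[v]\mapsto[\wedge^kF_{t,x}v]$ is continuous, so for $\psi\in\mathscr C_b(\hat Z_k)$ the function $\hat P_k\psi$ is fiberwise continuous there by dominated convergence. On $K$ we have $\hat\phi_k=-\infty$, so conditions (i)--(ii) hold.
\item[(5)] $\phi_1^+=\log^+\|\wedge^kF_{t,x}\|\le k\log^+\|F_{t,x}\|$, which lies in $L^1(\nu)$ by~\eqref{eq:integrability-nu}.
\item[(6)] By cocycle telescoping, $\hat\phi_{k,n}:=\sum_{j<n}\hat P_k^j\hat\phi_k$ equals the conditional expectation $\mathbb E[\log\|\wedge^kF^n v\|/\|v\|\mid(\omega_0,x,[v])]$, with the value $-\infty$ once the trajectory hits $\Delta$. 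This is where the extension of Markov operators to $[-\infty,\infty)$-valued functions (Lemma~\ref{lem:extend-markov-univ}) is used.
\end{enumerate}
With these in hand, Theorem~\ref{thm:VP} gives
\[
\Lambda_k(\nu)=\sup_{\hat\nu}\int\hat\phi_k\,d\hat\nu,
\]
the supremum is attained by an ergodic lift, and maximizers satisfy $\hat\nu(K)=0$. Here $\Lambda_k(\nu)$ is the quantity of Proposition~\ref{mainprop:kingman1}.

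\emph{Identification with $m$.} Let $\Lambda_k^m(\omega,x)=\lim_n\frac1n\log\|\wedge^kF^n_{(\omega,x)}\|$, which exists by Kingman's theorem for $\mathcal F$ (non-ergodic version, Appendix~\ref{ss:lyapunov-spectrum}). We aim for pointwise equality $\Lambda_k^m(\omega,x)=\Lambda_k(\omega_0,x)$ $m$-a.e.; differencing in $k$ then yields $\lambda_k(\omega,x)=\lambda_k(\omega_0,x)$ and, after integrating, $\lambda_k(m)=\lambda_k(\nu)$. We prove it in two inequalities, first in integrated form.

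\emph{Upper bound.} Since $\|\wedge^kF^nv\|\le\|\wedge^kF^n\|\,\|v\|$, taking conditional expectation and then the supremum over $[v]$ gives $\phi_n(\omega_0,x)\le \mathbb E[\log\|\wedge^kF^n\|\mid\omega_0,x]$. Integrating against $\nu$, dividing by $n$ and letting $n\to\infty$ yields $\Lambda_k(\nu)\le\int\Lambda_k^m\,dm$.

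\emph{Lower bound.} We construct a $\hat P_k$-invariant lift achieving $\int\Lambda^m_k\,dm$. Work on the extended skew product over $\tilde\Omega$, or over $\Omega\times X$, acting on $\PP(\wedge^k\mathcal E)$. Take Cesàro averages of measures supported on the top Oseledets-type direction, i.e.\ on $[\wedge^kF^n u_n]$, where $u_n$ is a maximizing vector of $\wedge^kF^n_{(\omega,x)}$. By the standard Furstenberg/Ledrappier argument, any weak limit of these averages, fiberwise in the compact fibers, is $\mathcal F$-invariant with integral of $\hat\phi_k$ at least $\int\Lambda_k^m\,dm$. Its projection to $T\times\hat{\mathcal E}_k$ through $(\omega,x,[v])\mapsto(\omega_0,x,[v])$ is then $\hat P_k$-invariant. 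This uses that the future noise given $(\omega_0,x)$ is governed by $q$, i.e.\ the Markov structure of $\tilde{\mathbb P}$. Since $\hat\phi_k$ depends only on $(\omega_0,x,[v])$, the integral is preserved. Hence $\int\Lambda_k^m\,dm\le\Lambda_k(\nu)$.

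\emph{From integrated to pointwise.} To pass to pointwise equality, apply the integrated equality to $m$ restricted to sets $\{\Lambda_k\in A\}$ (lifted to $\Omega\times X$). These sets are $\mathcal F$-invariant because $\Lambda_k$ is $P$-invariant. Alternatively, use ergodic decomposition of $\nu$ together with the ergodic case of Proposition~\ref{mainprop:kingman1}. Combined with the pointwise inequality $\Lambda_k(\omega_0,x)\le \mathbb E[\Lambda_k^m\mid\omega_0,x]$, and with invariance forcing $\Lambda_k^m$ to be measurable with respect to $(\omega_0,x)$ a.e., this gives the result.

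The main obstacle is the lower bound. The difficulties are: performing the weak-limit construction in a measurable bundle with only fiberwise continuity, using relative compactness of lifts with fixed marginal (upper semicontinuity of $\hat\nu\mapsto\int\hat\phi_k\,d\hat\nu$, as in the proof of Theorem~\ref{thm:VP}); handling $-\infty$ values through truncation $\max(\hat\phi_k,-M)$; and verifying rigorously that projecting an $\mathcal F$-invariant measure on the path-space projective bundle yields a $\hat P_k$-invariant measure. This last point is exactly where the hypothesis that the driving measure comes from the Markov chain~\eqref{eq:Markov-chain} enters.
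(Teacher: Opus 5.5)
You verify the hypotheses of Theorem~\ref{thm:VP} in (1)--(6), prove the upper bound $\Lambda_k(\nu)\le\Sigma_k(m)$, and reduce the pointwise statement to the integrated one; all of this is in line with the paper. For the upper bound you apply $\|\wedge^kF^nv\|\le\|\wedge^kF^n\|\,\|v\|$ directly to $\mathcal M\hat\phi_{k,n}$ instead of to invariant lifts, which is fine.

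The gap is in the lower bound, at the step ``its projection to $T\times\hat{\mathcal E}_k$ is then $\hat P_k$-invariant''. Write $\hat{\mathcal F}(\omega,x,[v])=(\sigma\omega,f_{\omega_0}(x),[\wedge^kF_{\omega_0,x}v])$. Let $\hat\nu$ be the image of an $\hat{\mathcal F}$-invariant lift $\hat m$ of $m$ under $(\omega,x,[v])\mapsto(\omega_0,x,[v])$. Invariance gives $\int\psi\,d\hat\nu=\int\psi(\omega_1,f_{\omega_0}(x),[\wedge^kF_{\omega_0,x}v])\,d\hat m$. To identify this with $\int\hat P_k\psi\,d\hat\nu$, you need $\omega_1$ to be distributed as $Q_{f_{\omega_0}(x)}(\omega_0,\cdot)$ conditionally on $(\omega_0,x,[v])$ under $\hat m$. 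The Markov structure of $\tilde{\mathbb P}$ gives this law conditionally on $(\omega_0,x)$ only. It says nothing once you also condition on a fiber coordinate correlated with the future noise.

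The claim is false for general invariant lifts. Take $X$ a point and an iid pair of invertible $2\times2$ matrices with $\lambda_1>\lambda_2$. The slow Oseledets direction $V_2(\omega)$ depends on the whole future and is equivariant, so $\int\delta_{(\omega,V_2(\omega))}\,dm$ is $\hat{\mathcal F}$-invariant. Given $\omega_0=t$, the law of $V_2(\omega)=A(t)^{-1}V_2(\sigma\omega)$ is $(A(t)^{-1})_*\eta_2$, where $\eta_2$ is the law of $V_2$; in general this depends on $t$. But $\hat P_1\psi(t,[v])=\int\psi(s,[A(t)v])\,dp(s)$ forces every $\hat P_1^*$-invariant measure to be a product $p\times\hat\mu$. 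Your measures are anticipating from the outset, since $u_n(\omega,x)$ depends on $\omega_1,\dots,\omega_{n-1}$. Nothing in the proposal shows that the Ces\`aro limit forgets this dependence.

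The missing idea is how the paper proves $\Sigma_k(m)\le\Lambda_k(\nu)$ with no path-space lift at all. It uses the elementary estimate of Lemma~\ref{lem:projective-averaging}: $\log\|L\|\le C_D+\int\log(\|Lv\|/\|v\|)\,d\rho_V([v])$, with $\rho_V$ the $O(V)$-invariant probability on $\PP(V)$. This follows from $\|Lv\|\ge|\langle v,e_1\rangle|\,\|L\|$ and the integrability of $\log|\langle v,e_1\rangle|$ on the sphere.

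Apply it to $L=\wedge^kF^n_{(\omega,x)}$, condition on $Z_0=(t,x)$, and use Lemma~\ref{lem:finite-horizon-expansion} and Fubini. This gives $\mathbb E[\log\|\wedge^kF^n\|\mid Z_0=(t,x)]\le C_{D_k}+\mathcal M\hat\phi_{k,n}(t,x)$. Integrating against $\nu$, dividing by $n$ and letting $n\to\infty$ gives the lower bound, and the variational formula then follows from Theorem~\ref{thm:VP} alone.

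In your language, the lemma replaces the anticipating maximizing direction $u_n(\omega,x)$ by a direction drawn from $\rho_{k,x}$ independently of the noise, at bounded cost. Ces\`aro averages started from such directions project exactly to $\frac1n\sum_{j<n}(\hat P_k^*)^j$ of the initial measure, so invariance becomes automatic. The comparison of integrals you would then still need is precisely this lemma.
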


Theorem~\ref{mainthm:B} identifies \(\sum_{i=1}^k\lambda_i(m)\) with the growth rate \(\Lambda_k(\nu)\). Combining this identification with Proposition~\ref{mainprop:kingman1}  yields the following asymptotic representation. Here the expectation $\mathbb{E}[\cdot]$ and the conditional expectation $\mathbb{E}[\cdot | \cdot ]$ are taken with respect to the driving measure $m$.

\begin{maincor}\label{maincor:B}
Under the assumptions of Theorem~\ref{mainthm:B}, if \(\nu\) is ergodic, then
\begin{align*}
\sum_{i=1}^k &\lambda_i(m)
=
\lim_{n\to\infty}\frac1n
\mathbb E\biggl[
\sup_{[v]\in\PP(\wedge^k\mathcal E_x)}
\log \frac{\|\wedge^kF^n_{(\omega,x)}v\|}{\|v\|}
\biggr]
\\[0.25cm]
&=
\lim_{n\to\infty}
\sup_{[v]\in\PP(\wedge^k\mathcal E_x)}
\frac1n
\mathbb E\biggl[
\log\frac{\|\wedge^kF^n_{(\omega,x)}v\|}{\|v\|}
\,\biggm|\,
(\omega_0,x,[v])
\biggr]
\quad\text{for \(\nu\)-a.e.\ \((\omega_0,x)\).}
\end{align*}
\end{maincor}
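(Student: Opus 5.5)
The two equalities come from two different sources, and I would prove them separately. The second (conditional) one is exactly Proposition~\ref{mainprop:kingman1} combined with Theorem~\ref{mainthm:B}. Since $\nu$ is ergodic, Proposition~\ref{mainprop:kingman1} gives $\Lambda_k(\omega_0,x)=\Lambda_k(\nu)$ for $\nu$-a.e.\ $(\omega_0,x)$. It also gives
\[
\Lambda_k(\omega_0,x)=\lim_{n\to\infty}\sup_{[v]}\frac1n\mathbb E\bigl[\log\|\wedge^kF^n_{(\omega,x)}v\|/\|v\|\,\bigm|\,(\omega_0,x,[v])\bigr].
\]
Theorem~\ref{mainthm:B} identifies $\Lambda_k(\nu)$ with $\sum_{i\le k}\lambda_i(m)$, which settles the second line. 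The only bookkeeping is to check that the conditional expectation with respect to $m$ given $(\omega_0,x,[v])$ is the same as $\hat\phi_{k,n}=\sum_{j<n}\hat P_k^j\hat\phi_k$ evaluated at $(\omega_0,x,[v])$. This follows from the Markov structure of $\tilde{\mathbb P}$: the future noise $(\omega_j)_{j\ge1}$ given $(\omega_0,x)$ is generated by the kernel $q$, and the telescoping identity
\[
\log\frac{\|\wedge^kF^n v\|}{\|v\|}=\sum_{j<n}\hat\phi_k\circ(\text{$j$-th projective iterate})
\]
holds, including the $-\infty$ convention on the degeneracy locus.

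For the first equality, I would write $\psi_n(\omega,x)=\log\|\wedge^kF^n_{(\omega,x)}\|=\sup_{[v]}\log\|\wedge^kF^n v\|/\|v\|$. The classical subadditive theory already recorded in the paper gives $\sum_{i\le k}\lambda_i(m)=\lim_n\frac1n\int\psi_n\,dm$. Here $m$ is $\mathcal F$-invariant with $\log^+$-integrability~\eqref{eq:integrability-m}, and the identity holds even for non-ergodic $m$ because the integrated limit equals $\int\sum_{i\le k}\lambda_i(\omega,x)\,dm$. So it remains only to note that $\mathbb E[\sup_{[v]}\cdots]=\int\psi_n\,dm$ by definition. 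The first line is therefore just the integrated Kingman formula, and ergodicity of $\nu$ is not even needed for it.

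As a consistency check, I would record the two inequalities that sandwich the quantities. Jensen (sup of expectations is at most expectation of sup) gives
\[
\sup_{[v]}\mathbb E[\cdot\mid(\omega_0,x,[v])]\le\mathbb E[\psi_n\mid(\omega_0,x)],
\]
so $\phi_n\le\mathbb E[\psi_n\mid\pi_0]$. Integrating against $\nu$ gives $\Lambda_k(\nu)\le\sum_{i\le k}\lambda_i(m)$. Equality then comes from Theorem~\ref{mainthm:B} rather than from any direct commutation of sup and expectation.

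The main obstacle is the measure-theoretic identification of the conditional expectation with $\hat P_k^j$-iterates. This requires checking that $m=\tilde\pi_*\tilde{\mathbb P}$ disintegrates over $(\omega_0,x)$ with conditional law of $(\omega_j)_{j\ge1}$ given by the place-dependent kernels $Q_{x_j}(\omega_{j-1},\cdot)$, uniformly in the lifted direction $[v]$. I would carry it out by induction on $n$ using $\hat P_k$ in~\eqref{eq:fully-place-Phat}, truncating $\hat\phi_k$ from below at $-N$ and passing to the limit by monotone convergence; the upper bound needed for that limit comes from the integrability of $\log^+\|F\|$.
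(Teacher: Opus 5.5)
Your proposal is correct and follows essentially the same route as the paper. The first line is the classical integrated Kingman formula for $\sum_{i\le k}\lambda_i(m)$, since $\sup_{[v]}\log(\|\wedge^kF^n_{(\omega,x)}v\|/\|v\|)=\log\|\wedge^kF^n_{(\omega,x)}\|$. The second line is Proposition~\ref{mainprop:kingman1} under ergodicity of $\nu$, combined with the identity $\sum_{i\le k}\lambda_i(m)=\Lambda_k(\nu)$ from Theorem~\ref{mainthm:B}; the conditional-expectation bookkeeping you flag as the main obstacle is exactly Lemma~\ref{lem:finite-horizon-expansion}.
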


The asymptotic representation in Corollary~\ref{maincor:B} entails a notable exchange of supremum and expectation: the asymptotic growth rate obtained by first maximizing over directions and then averaging over the noise equals the rate obtained by first averaging along individual directions and then maximizing. This exchange is nontrivial, the sup and the expectation do not commute in general, but here it emerges as a direct consequence of the variational structure. Moreover, it replaces the evaluation of operator norms at each iterate with the tracking of individual directions under conditional expectations, a formulation naturally suited to Monte Carlo approximation (cf.~\cite{EckmannRuelle85}).

\subsection{Further consequences and organization}

The preceding results admit several complementary formulations and
specializations. First, replacing projective directions in
\(P(\wedge^k\mathcal E_x)\) by \(k\)-dimensional subspaces of
\(\mathcal E_x\) gives an equivalent Grassmannian variational
principle in terms of the Jacobian. 
This formulation is stated and proved in
Proposition~\ref{cor:met-linear-grassmann}.  

In the Bernoulli case, the invariant law on the state-phase space has
the form \(p\times\mu\), where \(\mu\) is stationary on \(X\).
Theorem~\ref{mainthm:B} then reduces to a variational principle over
stationary projective lifts of \(\mu\). It also yields annealed
pointwise representations and recovers the classical independence of
the Lyapunov spectrum from the noise realization. These consequences
are stated in Proposition~\ref{prop:bernoulli-variational} and
Corollary~\ref{cor:bernoulli-sup-int}. 

For edge-Markovian systems, the map and the bundle morphism depend on
a transition \(t\to s\), rather than only on the current state. By
coding edges as vertices and comparing the corresponding invariant
lifts, we obtain a variational principle involving an annealed
projective potential. This is the content of
Proposition~\ref{prop:edge-variational}.

Finally, when the phase space consists of one point, the preceding
results give Furstenberg-type formulas for iid, vertex-Markov, and
edge-Markov products of possibly singular matrices. These formulas
hold in arbitrary dimension under the standard logarithmic moment
condition.

\subsubsection{Organization of the paper.}
\S\ref{sec:proof-thm-B} proves Proposition~\ref{mainprop:kingman1},
Theorem~\ref{mainthm:B}, and the Grassmannian formulation in
Proposition~\ref{cor:met-linear-grassmann}. \S\ref{sec:Prop-IV} treats the
Bernoulli case, including Proposition~\ref{prop:bernoulli-variational}
and Corollary~\ref{cor:bernoulli-sup-int}. \S\ref{sec:Prop-VI} proves the
edge-Markovian result, Proposition~\ref{prop:edge-variational}. In \S\ref{s:linear} we specify the previous results to the classical framework of linear cocycles. The abstract variational principles are proved in~\S\ref{sec:ThmA}.
Appendix~\ref{ss:lyapunov-spectrum} recalls the classical pointwise Lyapunov spectrum in the
nonergodic case. Appendix~\ref{appendix:Young-measure} develops the required Young-measure
topology on standard Borel bundles with compact fibers.
Appendix~\ref{appendix-A} contains the uniform counterpart of
Theorem~\ref{thm:VP}, and Appendix~\ref{app:dual-min-growth} establishes the dual
variational principle for minimal growth.

\section{Lyapunov exponents for random bundle morphisms} \label{sec:proof-thm-B}

In this section we prove Proposition~\ref{mainprop:kingman1} and
Theorem~\ref{mainthm:B}, and then establish the Grassmannian variational formulation 
in Proposition~\ref{cor:met-linear-grassmann}. We first verify the
factorization and regularity properties of the projective lift. 

\subsection{Factorization and regularity}
{We first establish the factorization property for operators defined by transition kernels, thereby extending Lemma~\ref{lem:markov-factorization} to the random-map setting. We begin by recalling the notion of a kernel: given measurable spaces $(T,\mathscr{A})$ and $(X,\mathscr{B})$, a
\emph{transition probability kernel} is a map $\kappa:X\times\mathscr{A}\to[0,1]$ such that
$\kappa(x,\cdot)$ is a probability measure on $(T,\mathscr{A})$ for every $x\in X$, and
$x\mapsto \kappa(x,A)$ is measurable for every $A\in\mathscr{A}$.}

\begin{lemma}\label{lem:markov-factor-preserves-Fpi}
Let $\pi:\hat X\to X$ be a surjective measurable map between standard Borel spaces, and let
$(T,\mathscr{A})$ be a measurable space. Let $\kappa(x,\cdot)$ be a transition probability kernel
on $X\times\mathscr{A}$. Let $f:T\times X\to X$ and $F:T\times\hat X\to\hat X$ be random maps such that
$\pi\circ F_t=f_t\circ\pi$ for all $t\in T$. Define operators on $B(X)$ and $B(\hat X)$ by
\[
(P\varphi)(x)=\int \varphi\bigl(f_t(x)\bigr)\,\kappa(x,dt),
\qquad
(\hat P\psi)(\hat x)=\int \psi\bigl(F_t(\hat x)\bigr)\,\kappa(\pi(\hat x),dt).
\]
Then $P$ and $\hat P$ are $\sigma$-Markov operators and $\hat P\circ\pi^*=\pi^*\circ P$.
\end{lemma}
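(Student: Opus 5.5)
The plan is to verify directly the three properties defining a $\sigma$-Markov operator for $P$ and $\hat P$, and then to check the intertwining identity pointwise. Two points need care. One is that $P\varphi$ and $\hat P\psi$ are again bounded Borel functions, so that the operators actually map into $B(X)$ and $B(\hat X)$. The other is $\sigma$-order continuity, which will follow from monotone convergence. I will treat $f$ and $F$ as jointly measurable maps $T\times X\to X$ and $T\times\hat X\to\hat X$, as in the definition of a random map.

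\textbf{Measurability.} This is the step where I expect the main obstacle, although it is standard. The function $(x,t)\mapsto\varphi(f_t(x))$ is $\mathscr B\otimes\mathscr A$-measurable and bounded. I will show that $x\mapsto\int g(x,t)\,\kappa(x,dt)$ is measurable for every bounded $\mathscr B\otimes\mathscr A$-measurable $g$, using a monotone class argument:
\begin{itemize}
\item for $g=1_{B\times A}$ the integral equals $1_B(x)\kappa(x,A)$, which is measurable by the definition of a kernel;
\item the class of sets $C$ for which the integral of $1_C$ is measurable is a $\lambda$-system containing the $\pi$-system of rectangles, so by Dynkin's theorem it is all of $\mathscr B\otimes\mathscr A$;
\item simple functions and uniform limits then give all bounded measurable $g$.
\end{itemize}
For $\hat P$ I apply the same argument to the kernel $\hat\kappa(\hat x,\cdot):=\kappa(\pi(\hat x),\cdot)$. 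This is a transition kernel on $\hat X\times\mathscr A$ because $\pi$ is measurable, and $(\hat x,t)\mapsto\psi(F_t(\hat x))$ is jointly measurable.

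\textbf{The Markov and $\sigma$-order continuity properties.} Linearity and positivity are immediate from linearity and positivity of the integral. The normalization $P1_X=1_X$ and $\hat P1_{\hat X}=1_{\hat X}$ holds because $\kappa(x,\cdot)$ is a probability measure. Boundedness follows from $|P\varphi|\le\|\varphi\|_\infty$. For $\sigma$-order continuity, suppose $\varphi_n\downarrow0$ pointwise. Then for each fixed $x$ we have $\varphi_n(f_t(x))\downarrow0$ for every $t$, and the sequence is dominated by the integrable constant $\|\varphi_1\|_\infty$. Dominated (or monotone) convergence under the probability $\kappa(x,\cdot)$ gives $P\varphi_n(x)\downarrow0$, with monotonicity in $n$ coming from positivity. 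The same argument works for $\hat P$.

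\textbf{Intertwining.} For $\varphi\in B(X)$ and $\hat x\in\hat X$, the hypothesis $\pi\circ F_t=f_t\circ\pi$ gives
\[
\hat P(\pi^*\varphi)(\hat x)=\int\varphi\bigl(\pi(F_t(\hat x))\bigr)\,\kappa(\pi(\hat x),dt)=\int\varphi\bigl(f_t(\pi(\hat x))\bigr)\,\kappa(\pi(\hat x),dt)=(P\varphi)(\pi(\hat x)),
\]
which is exactly $\pi^*(P\varphi)(\hat x)$. This proves $\hat P\circ\pi^*=\pi^*\circ P$.
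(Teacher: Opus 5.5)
Your proof is correct, and the intertwining computation is the same as the paper's. The $\sigma$-Markov part follows a different, more elementary route.

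The paper introduces the evaluation maps $f^x(t)=f_t(x)$ and $F^{\hat x}(t)=F_t(\hat x)$, and defines the pushforward kernels $q(x,\cdot)=(f^x)_*\kappa(x,\cdot)$ and $\hat q(\hat x,\cdot)=(F^{\hat x})_*\kappa(\pi(\hat x),\cdot)$. By change of variables it rewrites $P$ and $\hat P$ as integration against $q$ and $\hat q$. It then invokes the kernel-representation lemma (Lemma~\ref{lem:kernel-representation}) to conclude that both operators are $\sigma$-Markov.

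That packaging has two advantages. It exhibits the operators in the kernel form used later, for instance in Lemma~\ref{lem:extend-markov-univ} and in the Markov-chain interpretation. It also delegates positivity, normalization and $\sigma$-order continuity to a cited result.

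However, for $q$ and $\hat q$ to be transition kernels, one needs $x\mapsto q(x,B)=\int 1_B(f_t(x))\,\kappa(x,dt)$ to be measurable, and likewise for $\hat q$. The paper leaves this implicit. Your Dynkin/monotone-class argument for $x\mapsto\int g(x,t)\,\kappa(x,dt)$ is exactly what supplies it.

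Your direct check of the axioms by dominated convergence makes the argument self-contained. Conversely, your measurability step already yields the kernel the paper uses. It shows that $q(x,B):=P1_B(x)$ is measurable in $x$ and is a probability measure in $B$.
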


\begin{proof}
For $x\in X$ and $\hat x\in\hat X$ let $f^x:T\to X$ and $F^{\hat x}:T\to\hat X$ be the evaluation maps
$f^x(t)=f_t(x)$ and $F^{\hat x}(t)=F_t(\hat x)$. Define probability kernels on $X$ and $\hat X$ by
$q(x,\cdot):=(f^x)_*\kappa(x,\cdot)$ and $\hat q(\hat x,\cdot):=(F^{\hat x})_*\kappa(\pi(\hat x),\cdot)$.
Then, by change of variables,
\[
P\varphi(x)=\int \varphi(y)\,q(x,dy),\qquad
\hat P\psi(\hat x)=\int \psi(\hat y)\,\hat q(\hat x,d\hat y),
\]
so $P$ and $\hat P$ are $\sigma$-Markov by Lemma~\ref{lem:kernel-representation}. {To verify the intertwining relation, let $\varphi\in B(X)$ and set $x=\pi(\hat x)$. Using $\pi\circ F_t=f_t\circ\pi$,}
\[
(\hat P\pi^*\varphi)(\hat x)
=\int (\varphi\circ\pi)\bigl(F_t(\hat x)\bigr)\,\kappa(x,dt)
=\int \varphi\bigl(f_t(x)\bigr)\,\kappa(x,dt)
=(\pi^*P\varphi)(\hat x),
\]
hence $\hat P\circ\pi^*=\pi^*\circ P$.
\end{proof}

We now verify the hypotheses needed to apply Theorem~\ref{thm:VP} to the random bundle morphisms introduced in \S\ref{s:random-morphisms}. 
%
To do this, we keep the notations $f:T\times X \to X$, $F:T\times \mathcal{E}\to \mathcal{E}$, $Z=T\times X$,  $\hat{\mathcal{E}}_k$ and $\hat Z_k:=T\times \hat{\mathcal E}_k$ and ${\pi}_k:\hat Z_k\to Z$. The random bundle morphism $\hat{F}_{k}$, the operators $P$, $\hat P_k$ and the potential $\hat\phi_k$
are defined by~\eqref{eq:fully-place-P},~\eqref{eq:random-bundle-map}, \eqref{eq:fully-place-Phat} and \eqref{eq:fully-place-phihat}, respectively. To apply Lemma~\ref{lem:markov-factor-preserves-Fpi}, we consider the kernel 
$\kappa((t,x),A):=Q_{f_t(x)}(t,A)$ on $Z\times \mathscr{A}$.  Clearly,
\begin{align*}
    P\varphi(t,x) &=\int \varphi(\bar{f}_s(t,x))\, \kappa((t,x),ds) \ \ \text{and} \\
\hat P_k\psi(t,x,[v]) &=\int \psi(\hat F_{k,s}(t,x,[v]))\, \kappa(\pi_k(t,x,[v]),ds). 
\end{align*}
Moreover, since ${\pi}_k\circ \hat{F}_{k,s}=\bar f_s\circ {\pi}_k$ for all $s\in T$, Lemma~\ref{lem:markov-factor-preserves-Fpi} yields the factorization
\begin{equation}\label{eq:Pk-factorization}
\hat P_k\circ {\pi}_k^*= {\pi}_k^*\circ P.
\end{equation}
The following lemma verifies for the data \(({\pi}_k,\hat P_k,P,\hat\phi_k,\nu)\) the regularity assumptions of Theorem~\ref{thm:VP}. In particular, once proved, all the hypotheses of Theorem~\ref{thm:VP} hold. 


\enlargethispage{0.2cm}
\begin{lemma}\label{lem:regularity-for-VP}
{Let
\(
K_k := \{(t,x,[v]) : \wedge^k F_{t,x}v = 0\}\cup \Delta
\)
be the degeneracy locus. Then}
\begin{enumerate}[label=(\roman*), leftmargin=0.9cm]
\item For every $\psi\in\mathscr C_b(\hat Z_k)$, every fiberwise discontinuity point of $\hat P_k\psi$ belongs to $K_k$;
\item $\hat\phi_k$ is fiberwise upper semicontinuous and $\hat\phi_k|_{K_k}= -\infty$;
\item $(\mathcal M\hat\phi_k)^+\in L^1(\nu)$.
\end{enumerate}
\end{lemma}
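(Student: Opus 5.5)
We verify the three items separately, working fiber by fiber. Fix $(t,x)\in Z$; the fiber $\pi_k^{-1}(t,x)$ is the compact space $\PP(\wedge^k\mathcal E_x)$, with the cemetery point $\Delta_x$ adjoined as an isolated point when $F$ is singular. The linear map $L:=\wedge^kF_{t,x}$ is fixed on this fiber, so all three items reduce to statements about one linear map between finite-dimensional Euclidean spaces.

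\emph{Item (ii).} On $\PP(\wedge^k\mathcal E_x)$ the function $[v]\mapsto \|Lv\|/\|v\|$ is well defined and continuous with values in $[0,\infty)$. Its logarithm is therefore continuous as a map into $[-\infty,\infty)$, hence upper semicontinuous, and it equals $-\infty$ exactly on $\PP(\ker L)$. At the isolated point $\Delta_x$, upper semicontinuity is automatic, and $\hat\phi_k=-\infty$ there by convention. So $\hat\phi_k$ is fiberwise u.s.c. and $\hat\phi_k|_{K_k}=-\infty$. Measurability of $\hat\phi_k$ follows from measurability of $F$ and of the norm.

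\emph{Item (i).} Let $\psi\in\mathscr C_b(\hat Z_k)$ and let $[v_j]\to[v]$ in the fiber over $(t,x)$ with $(t,x,[v])\notin K_k$, so that $Lv\neq0$. The projective action $[w]\mapsto[Lw]$ is continuous on the open set $\PP(\wedge^k\mathcal E_x)\setminus\PP(\ker L)$. Hence $[Lv_j]\to[Lv]$ in the fiber over $(s,f_t(x))$ for every $s$, and by fiberwise continuity of $\psi$ we get $\psi(\hat F_{k,s}(t,x,[v_j]))\to\psi(\hat F_{k,s}(t,x,[v]))$ for each $s$. The kernel $Q_{f_t(x)}(t,\cdot)$ does not depend on $[v]$, and $\psi$ is bounded, so dominated convergence gives $\hat P_k\psi(t,x,[v_j])\to\hat P_k\psi(t,x,[v])$. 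Thus every fiberwise discontinuity point lies in $K_k$. The point to note is that $\hat F_{k,s}(t,x,[v])$ lands in the fiber over $(s,f_t(x))$, whose base point $s$ varies, and fiberwise continuity of $\psi$ is used there. Since the base is fixed along the sequence, only continuity within each fiber is needed.

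\emph{Item (iii).} We have $\mathcal M\hat\phi_k(t,x)=\log\|\wedge^kF_{t,x}\|\le k\log\|F_{t,x}\|$, with value $-\infty$ if $L=0$. This uses the standard bound $\|\wedge^kA\|=\sigma_1\cdots\sigma_k\le\|A\|^k$. Hence $(\mathcal M\hat\phi_k)^+\le k\log^+\|F_{t,x}\|$, which lies in $L^1(\nu)$ by~\eqref{eq:integrability-nu}.

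The only step that needs some care is item (i): confirming that adjoining the cemetery section keeps the fibers compact and makes $\Delta_x$ isolated, and that points of $\PP(\ker L)$ are already excluded by membership in $K_k$. If $\Delta_x$ were not isolated, one would instead have to check the behaviour of $\psi$ near $\Delta$. With the disjoint-union topology, though, the argument goes through as above.
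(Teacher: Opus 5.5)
Your proof is correct and follows the paper's argument essentially step by step. It obtains fiberwise upper semicontinuity from the continuity of $[v]\mapsto\|\wedge^kF_{t,x}v\|/\|v\|$, with $\Delta_x$ isolated, and integrability from the bound $(\mathcal M\hat\phi_k)^+\le k\log^+\|F_{t,x}\|$. Item (i) comes from continuity of the projective action off the closed set $K_k\cap\pi_k^{-1}(t,x)$ combined with fiberwise continuity of $\psi$ and dominated convergence.
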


\begin{proof}
Fix $(t,x)\in Z$ and identify the fiber $\pi_k^{-1}(t,x)$ with $(\hat{\mathcal E}_{k})_x$. On $(\hat{\mathcal E}_{k})_x$ we have
\[
\hat\phi_k(t,x,[v])=\log\frac{\|\wedge^kF_{t,x}v\|}{\|v\|},
\]
with the convention $\log 0=-\infty$, and $\hat\phi_k(t,x,\Delta_x)=-\infty$ in the singular case. The map
$[v]\mapsto \|\wedge^kF_{t,x}v\|/\|v\|$ is continuous on $(\hat{\mathcal E}_{k})_x\setminus\ker(\wedge^kF_{t,x})$ and vanishes
on $\ker(\wedge^kF_{t,x})$, hence $[v]\mapsto \hat\phi_k(t,x,[v])$ is upper semicontinuous on the whole fiber. Moreover, by definition, $\hat\phi_k=-\infty$ on $K_k$, proving (ii).

We also have $\hat\phi_k(t,x,[v])\le \log\|\wedge^kF_{t,x}\|\le k\log^+\|F_{t,x}\|$ for all $(t,x,[v])$. Therefore \(\mathcal M\hat\phi_k(t,x)\le \log\|\wedge^kF_{t,x}\|\), and hence
\[
(\mathcal M\hat\phi_k)^+(t,x)\le \log^+\|\wedge^kF_{t,x}\|\le k\log^+\|F_{t,x}\|.
\]
The right-hand side is $\nu$-integrable by \eqref{eq:integrability-nu}, so $(\mathcal M\hat\phi_k)^+\in L^1(\nu)$ and we get~(iii).

{It remains to prove (i).} Let $\psi\in\mathscr C_b(\hat Z_k)$ and fix $(t,x)\in Z$. For $[v]\in (\hat{\mathcal E}_{k})_x$ with
$(t,x,[v])\notin K_k$, we have $\wedge^kF_{t,x}v\neq 0$ and the projective map
\[
\Theta_{t,x}:(\hat{\mathcal E}_{k})_x\setminus\ker(\wedge^kF_{t,x})\to (\hat{\mathcal E}_{k})_{f_t(x)},\qquad
\Theta_{t,x}([v])=[\wedge^kF_{t,x}v],
\]
is continuous. If $[v_n]\to[v]$ in the fiber with $(t,x,[v])\notin K_k$, then $\Theta_{t,x}([v_n])\to\Theta_{t,x}([v])$.
Using~\eqref{eq:fully-place-Phat},
\[
(\hat P_k\psi)(t,x,[v])=\int \psi\bigl(s,f_t(x),\Theta_{t,x}([v])\bigr)\,Q_{f_t(x)}(t,ds).
\]
For each fixed $s\in T$, the map $\zeta\mapsto \psi(s,f_t(x),\zeta)$ is continuous on the fiber over $f_t(x)$
because $\psi$ is Carath\'eodory, hence the integrand converges pointwise in $s$ along $[v_n]\to[v]$.
Since $\psi$ is bounded, dominated convergence yields $(\hat P_k\psi)(t,x,[v_n])\to(\hat P_k\psi)(t,x,[v])$.
Thus \(\hat P_k\psi\) is continuous on \(\pi_k^{-1}(t,x)\setminus K_k\). Moreover,
\[
K_k\cap\pi_k^{-1}(t,x)
=
\PP\bigl(\ker(\wedge^kF_{t,x})\bigr)\sqcup\{\Delta_x\}.
\]
The projectivized kernel is closed and \(\Delta_x\) is isolated; hence
\(K_k\cap\pi_k^{-1}(t,x)\) is closed in the fiber. Thus, $\pi_k^{-1}(t,x)\setminus K_k$  is open and since \(\hat P_k\psi\) is
fiberwise continuous at every point outside \(K_k\), it follows that its fiberwise discontinuity points belong to $K_k$ 
which proves~\textup{(i)}.
\end{proof}

\subsection{Proof of Proposition~\ref{mainprop:kingman1}}
\label{ss:proof-mainprop-kingman1}

Fix \(k\in\{1,\dots,d\}\). For \(n\geq1\), define
\begin{equation}\label{eq:def-projective-maximal-process}
\hat\phi_{k,n}
:=
\sum_{j=0}^{n-1}\hat P_k^{\,j}\hat\phi_k,
\qquad
\phi_{k,n}
:=
\mathcal M\hat\phi_{k,n}.
\end{equation}
The sequence \((\hat\phi_{k,n})_{n\geq1}\) is \(\hat P_k\)-additive. Together with the factorization $\hat P_k\circ\pi_k^*=\pi_k^*\circ P$
obtained in~\eqref{eq:Pk-factorization}, Theorem~\ref{mainthm:A} therefore
implies that \((\phi_{k,n})_{n\geq1}\) is \(P\)-subadditive.

We next identify the functions \(\hat\phi_{k,n}\) and \(\phi_{k,n}\) in terms
of finite-time directional growth. To facilitate the proof, we adopt the probabilistic interpretation of~\eqref{eq:kernel-q} as a Markov chain $(Z_n)_{n\ge0}$ on $Z$ with transition kernel $q(z,C)$ as defined in~\eqref{eq:Markov-chain}. Accordingly, we denote by $\mathbb{E}[\ \cdot \ | \ Z_0=(t,x) ]$ the expectation with respect to the path measure of the chain starting at $(\omega_0,x_0)=(t,x)$. This represents the average value over all possible future paths (trajectories) of the noise, specifically conditioned on the fact that we start at the specific configuration $(\omega_0,x_0)=(t,x)$. Similarly, let $(\tilde Z_n)_{n\ge 0}$ denote the lifted Markov chain on the projective state space $\hat Z_k=T\times \hat{\mathcal{E}}_k$.  That is, \(\tilde Z_j=(\omega_j,x_j,[v_j])\) with \(\tilde Z_0=(t,x,[v])\) driven by the random bundle morphisms \(\hat F_{k,s}\), where the parameter \(s\) corresponds to the next noise state. More precisely,
$\tilde Z_{j+1}=\hat F_{k,\omega_{j+1}}(\tilde Z_j)$,
where \(\omega_{j+1}\) is chosen according to \(Q_{x_{j+1}}(\omega_j,\cdot)\) and \(x_{j+1}=f_{\omega_j}(x_j)\).  With this notation, for any measurable function $g$ bounded from above and any $j\geq 1$, the Markov operator satisfies
$$
(\hat P_k^j g)(z) = \mathbb{E}\big[g(\tilde Z_j) \mid \tilde Z_0 = z\big], \quad \text{for } z=(t,x,[v]).
$$
The kernel representation of $\hat P_k$ gives the same identity for nonnegative measurable functions and, for
extended-real-valued functions whose positive part has finite
conditional expectation. We will use as $g$ the function $\hat \phi_k$, and to simplify notation we admit the convention that
\[
\log\frac{\|\wedge^kF^n_{(\omega,x)}v\|}{\|v\|}=-\infty
\]
whenever either \(\tilde Z_j=(\omega_j,x_j,\Delta_{x_j})\) for some \(0\le j\le n-1\), or \(\wedge^kF^n_{(\omega,x)}v=0\).

\begin{lemma} \label{lem:finite-horizon-expansion}
For every $(t,x,[v])\in \hat Z_k$ and $n\geq 1$, we have
\begin{align*}
 \hat\phi_{k,n}(t,x,[v]) &=\mathbb{E}\bigg[ \log\frac{\|\wedge^k F^n_{(\omega,x)}v\|}{\|v\|} \ \Big| \ \tilde Z_0=(t,x,[v])\bigg] =\int
\log\frac{\bigl\|\wedge^k F^n_{(\omega,x)}v\big\|}{\|v\|}
\; dQ^{n}_{t,x}
\end{align*}
where $Q^n_{t,x}$ is the probability measure on $T^{n-1}$ {generated along the random orbit segment \(x_i=f_{\omega_{i-1}}(x_{i-1})\), \(i=1,\dots,n-1\), starting at \((\omega_0,x_0)=(t,x)\), namely}
\begin{equation} \label{eq:Qn}
    dQ^{n}_{t,x}(\omega_1,\dots,\omega_{n-1})
:=
Q_{x_1}(t,d\omega_1)\,Q_{x_2}(\omega_1,d\omega_2)\cdots Q_{x_{n-1}}(\omega_{n-2},d\omega_{n-1}).
\end{equation}
\end{lemma}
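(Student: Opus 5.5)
The plan is to expand $\hat\phi_{k,n}=\sum_{j=0}^{n-1}\hat P_k^{\,j}\hat\phi_k$ term by term using the Markov-chain representation $(\hat P_k^j g)(z)=\mathbb E[g(\tilde Z_j)\mid\tilde Z_0=z]$. Then we identify $\sum_j\hat\phi_k(\tilde Z_j)$ with the telescoping logarithmic growth along the random orbit. Concretely, writing $\tilde Z_j=(\omega_j,x_j,[v_j])$ with $v_0=v$ and $v_{j+1}=\wedge^kF_{\omega_j,x_j}v_j$, we have $\hat\phi_k(\tilde Z_j)=\log(\|v_{j+1}\|/\|v_j\|)$ as long as no degeneration has occurred. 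Summing gives
\[
\sum_{j=0}^{n-1}\hat\phi_k(\tilde Z_j)=\log\frac{\|\wedge^kF^n_{(\omega,x)}v\|}{\|v\|},
\]
since $\wedge^k$ is functorial, so that $\wedge^kF^n_{(\omega,x)}=\wedge^kF_{\omega_{n-1},x_{n-1}}\circ\cdots\circ\wedge^kF_{\omega_0,x_0}$. If some $v_{j+1}=0$, or some $[v_j]=\Delta_{x_j}$, then the corresponding term is $-\infty$. Every later state stays in the cemetery section, so it also contributes $-\infty$. The adopted convention makes the right-hand side $-\infty$ in exactly that case, so the identity holds pathwise in $[-\infty,\infty)$.

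Next we exchange sum and expectation. Each $\hat\phi_k(\tilde Z_j)$ is bounded above by $k\log^+\|F_{\omega_j,x_j}\|$. I would first handle the positive parts. Pointwise in $z$, finiteness of $\mathbb E[\log^+\|F_{\omega_j,x_j}\|\mid\tilde Z_0=z]$ is not guaranteed, so I would avoid needing it. The plan is to work with extended values: by Lemma~\ref{lem:extend-markov-univ}, $\hat P_k^j$ acts on functions with values in $[-\infty,\infty)$ via the kernel, and linearity of the integral for functions in $[-\infty,\infty]$ with well-defined sums holds whenever the positive parts are integrable. If some positive part is not integrable, both sides should be interpreted consistently through the same kernel convention, presumably yielding $+\infty$ on both sides. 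I expect this bookkeeping to be the main obstacle. The cleanest route is to note that the $n$-step path law of $(\tilde Z_0,\dots,\tilde Z_{n-1})$ under $\mathbb E[\cdot\mid\tilde Z_0=z]$ has each marginal equal to $\hat q^{\,j}(z,\cdot)$. Then $\mathbb E\big[\sum_j g(\tilde Z_j)\big]=\sum_j\mathbb E[g(\tilde Z_j)]$ holds by additivity of the integral on the path space whenever the sum is defined, and since every term is bounded above by an integrable function or else both sides are infinite in the same way, it is defined.

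Finally, to get the explicit integral against $Q^n_{t,x}$, observe that $\log(\|\wedge^kF^n_{(\omega,x)}v\|/\|v\|)$ depends only on $(\omega_0,\dots,\omega_{n-1})$ and $x$. The positions $x_i=f_{\omega_{i-1}}(x_{i-1})$ are deterministic functions of the noise, and the directions are deterministic given the noise. Starting the chain at $(\omega_0,x_0)=(t,x)$, the law of $(\omega_1,\dots,\omega_{n-1})$ is obtained by iterating the kernel $q$ from~\eqref{eq:Markov-chain}. Each step draws $\omega_{i}\sim Q_{x_i}(\omega_{i-1},\cdot)$ with $x_i$ already determined, which is exactly the product~\eqref{eq:Qn}. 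This is verified by a short induction on $n$ using Fubini and the definition of $\hat P_k$ in~\eqref{eq:fully-place-Phat}. The noise $\omega_n$ is irrelevant because $F^n$ involves only $\omega_0,\dots,\omega_{n-1}$.
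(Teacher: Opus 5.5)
Your proposal is correct and follows the paper's proof essentially step by step. Write $\hat\phi_{k,n}(z)=\sum_{j<n}\mathbb E[\hat\phi_k(\tilde Z_j)\mid\tilde Z_0=z]$, move the sum inside the expectation, telescope pathwise (the cemetery and kernel cases give $-\infty$ on both sides), and then reduce the conditional expectation to an integral over $(\omega_1,\dots,\omega_{n-1})$; since the positions $x_i$ are deterministic given the noise, this yields exactly $Q^n_{t,x}$.

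Your integrability bookkeeping is more careful than the paper's, which swaps sum and expectation without comment. However, your guess that non-integrable positive parts yield $+\infty$ on both sides is not justified in general, because cancellations between non-integrable increments can make one side $-\infty$ or undefined while the other is finite. The clean fix is to note that the exchange is valid whenever $\mathbb E[\log^+\|F_{\omega_j,x_j}\|\mid Z_0=(t,x)]<\infty$ for all $j<n$. This holds for $\nu$-a.e.\ $(t,x)$ by $P$-invariance of $\nu$ and the log-moment assumption, and that is all the later arguments use.
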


\begin{proof}
By definition of \(\hat\phi_{k,n}\) and of the Markov operator \(\hat P_k\), for \(z=(t,x,[v])\),
\[
\hat\phi_{k,n}(z)
=
\sum_{j=0}^{n-1} \mathbb{E}\big[\hat\phi_k(\tilde Z_j) \bigm| \tilde Z_0 = z\big]
=
\mathbb{E}\bigg[ \sum_{j=0}^{n-1} \hat\phi_k(\tilde Z_j) \biggm| \tilde Z_0 = z \bigg].
\]

If \(\tilde Z_j\) hits the cemetery for some \(0\le j\le n-1\), then \(\hat\phi_k(\tilde Z_j)=-\infty\), and therefore
\[
\sum_{j=0}^{n-1}\hat\phi_k(\tilde Z_j)=-\infty,
\]
which agrees with the above convention. Thus, we may assume that \(\tilde Z_j=(\omega_j,x_j,[v_j])\) lies in the projective part for every \(0\le j\le n-1\). In this case,
\[
\hat\phi_k(\tilde Z_j)
=
\log \frac{\| \wedge^k F_{\omega_j, x_j} v_j \|}{\|v_j\|}
=
\log \frac{\|v_{j+1}\|}{\|v_j\|}.
\]
If \(v_n=0\), then
\[
\hat\phi_k(\tilde Z_{n-1})
=
\log \frac{\|v_n\|}{\|v_{n-1}\|}
=
-\infty,
\]
so again
\[
\sum_{j=0}^{n-1}\hat\phi_k(\tilde Z_j)=-\infty,
\]
which agrees with the convention. Otherwise, \(v_n\neq0\), and the sum inside the expectation telescopes:
\[
\sum_{j=0}^{n-1} \log \frac{\|v_{j+1}\|}{\|v_j\|}
=
\log \frac{\|v_n\|}{\|v_0\|}
=
\log \frac{\|\wedge^k F^n_{(\omega,x)}v\|}{\|v\|}.
\]
Hence, in all cases,
\[
\sum_{j=0}^{n-1}\hat\phi_k(\tilde Z_j)
=
\log \frac{\|\wedge^k F^n_{(\omega,x)}v\|}{\|v\|},
\]
which proves the first equality.

For the second equality, we compute the expectation using the transition kernel of the base chain \((Z_j)_{j\ge0}\), where \(Z_j=(\omega_j,x_j)\). Recall that the kernel on \(Z\) is
\[
q((t,x),ds\times dy)=Q_{f_t(x)}(t,ds)\,\delta_{f_t(x)}(dy).
\]
Thus, conditioned on the initial state \(Z_0=(t,x)\) and the noise path \(\omega=(\omega_0,\dots,\omega_{n-1})\), the spatial trajectory is deterministic, namely \(x_{j+1}=f_{\omega_j}(x_j)\). Therefore, the expectation over future paths reduces to integration over the noise coordinates:
\begin{align*}
\mathbb{E}\big[ \Phi(\omega, x) \mid \tilde Z_0=z\big]
&=
\int  \Phi(\omega, x) \, q(Z_0, dZ_1) \cdots q(Z_{n-2}, dZ_{n-1}) \\
&=
\int_{T^{n-1}} \Phi(\omega, x) \, Q_{x_1}(t, d\omega_1) \cdots Q_{x_{n-1}}(\omega_{n-2}, d\omega_{n-1}).
\end{align*}
By the definition in~\eqref{eq:Qn}, this product measure is exactly \(Q^n_{t,x}\). Substituting
$$
\Phi(\omega,x)=\log\frac{\|\wedge^k F^n_{(\omega,x)}v\|}{\|v\|}
$$
completes the proof.
\end{proof}

For \(n=1\), the fiberwise maximum is $\phi_{k,1}(t,x)
=
\mathcal M\hat\phi_k(t,x)
=
\log\|\wedge^kF_{t,x}\|$.
Consequently,
\[
\phi_{k,1}^+(t,x)
=
\log^+\|\wedge^kF_{t,x}\|
\leq
k\log^+\|F_{t,x}\|,
\]
and hence \(\phi_{k,1}^+\in L^1(\nu)\) by~\eqref{eq:integrability-nu}. We may therefore apply Kingman's subadditive theorem for Markov
operators~\cite[Thm.~D]{BM} to the \(P\)-subadditive sequence
\((\phi_{k,n})_{n\geq1}\). It gives a \(P\)-invariant measurable function
\[
\Lambda_k:Z\longrightarrow[-\infty,\infty)
\]
with \(\Lambda_k^+\in L^1(\nu)\) such that
\begin{equation}\label{eq:pointwise-kingman-projective}
\Lambda_k(t,x)
=
\lim_{n\to\infty}\frac1n\phi_{k,n}(t,x)
\qquad
\text{for \(\nu\)-a.e. \((t,x)\),}
\end{equation}
and
\begin{equation}\label{eq:integrated-kingman-projective}
\int\Lambda_k\,d\nu
=
\lim_{n\to\infty}\frac1n\int\phi_{k,n}\,d\nu
=
\inf_{n\geq1}\frac1n\int\phi_{k,n}\,d\nu.
\end{equation}
We set
\[
\Lambda_k(\nu):=\int\Lambda_k\,d\nu.
\]
From Lemma~\ref{lem:finite-horizon-expansion} and
\eqref{eq:pointwise-kingman-projective} we get
\[
\Lambda_k(t,x)
=
\lim_{n\to\infty}
\sup_{[v]\in\PP(\wedge^k\mathcal E_x)}
\frac1n
\mathbb E\biggl[
\log\frac{\|\wedge^kF^n_{(\omega,x)}v\|}{\|v\|}
\,\biggm|\,
(\omega_0,x,[v]) 
\biggr]\qquad \text{for \(\nu\)-a.e.~\((\omega_0,x)\).}
\]
 Likewise,
\eqref{eq:integrated-kingman-projective} gives
\[
\Lambda_k(\nu)
=
\lim_{n\to\infty}\frac1n
\int
\sup_{[v]\in\PP(\wedge^k\mathcal E_x)}
\mathbb E\biggl[
\log\frac{\|\wedge^kF^n_{(\omega,x)}v\|}{\|v\|}
\,\biggm|\,
(\omega_0,x,[v])
\biggr]
\,d\nu.
\]
These are the first two assertions of
Proposition~\ref{mainprop:kingman1}.

If \(\nu\) is ergodic, the \(P\)-invariant function \(\Lambda_k\) is
constant \(\nu\)-almost everywhere. Its constant value is its integral,
namely \(\Lambda_k(\nu)\). Therefore, $\Lambda_k(t,x)=\Lambda_k(\nu)$ for \(\nu\)-a.e.~\((t,x)\),
which proves the final assertion of Proposition~\ref{mainprop:kingman1}.

\subsection{Proof of Theorem~\ref{mainthm:B}} {Fix \(k\in\{1,\dots,d\}\). To prove the theorem, we first show that \(\Sigma_k(m)=\Lambda_k(\nu)\). We use the following descriptions of these quantities:}
\begin{align*}
     \Sigma_k(m) = \lim_{n\to \infty} \frac{1}{n} \int \log \|\wedge^k F^n_{(\omega,x)}\|\, dm
\ \ \text{and}  \ \ 
\Lambda_k(\nu) =\sup \big\{\int \hat\phi_k\, d\hat\nu:  \hat\nu \in \mathcal{I}_\nu(\hat P_k)\big\}.
\end{align*}
Proposition~\ref{mainprop:kingman1} identifies $\Lambda_k(\nu)$
with the growth rate of the $P$-subadditive sequence
$(\phi_{k,n})_{n\geq1}$. By
Lemma~\ref{lem:regularity-for-VP} and
Theorem~\ref{thm:VP}, this growth rate admits the variational
representation above.



\subsubsection{Inequality $\Lambda_k(\nu)\leq \Sigma_k(m)$}

Now we fix $\hat \nu \in \mathcal{I}_\nu(\hat P_k)$. From Lemma~\ref{lem:finite-horizon-expansion}, we have the pointwise inequality
\[
\hat\phi_{k,n}(t,x,[v]) \le \mathbb{E}\big[\Psi_n \mid Z_0=(t,x)\big] \qquad \text{for } (t,x,[v]) \in \hat Z_k,
\]
where $\Psi_n$ is the random variable corresponding to $\log\|\wedge^k F^{n}_{(\omega,x)}\|$. {Here \(\Psi_n\) depends on the noise path \(\omega\) and the initial position \(x\).} Integrating the inequality against $\hat\nu$ and noting that the right-hand side does not depend on the fiber coordinate $[v]$, we use the projection property $(\pi_k)_*\hat\nu = \nu$ to obtain
\begin{equation}\label{eq:proof-ineq-1}
\int \hat\phi_{k,n} \, d\hat\nu \le \int \mathbb{E}\big[\Psi_n \mid Z_0=z\big] \, d\nu.
\end{equation}
{We now identify the right-hand side with the integral against the driving measure \(m\). Recall that \(\tilde{\mathbb P}\) is the Markov measure on the path space \(\tilde\Omega=Z^{\mathbb N}\) with initial law \(\nu\) and transition kernel \(q(z,C)\). By the defining property of the Markov measure, integrating the conditional expectation against the initial law yields the global expectation:}
\[
\int \mathbb{E}\big[\Psi_n \mid Z_0=z\big] \, d\nu = \int (\Psi_n \circ \tilde{\pi}) \, d\tilde{\mathbb{P}},
\]
where we view $\Psi_n$ as a function on $\Omega \times X$ and lift it to $\tilde{\Omega}=Z^{\mathbb{N}}$ via the projection $\tilde{\pi}((t_i, x_i)_{i\ge 0}) = ((t_i)_{i\ge 0}, x_0)$. {Finally, since the driving measure is defined by}
$
m = \tilde{\pi}_* \tilde{\mathbb{P}}$, 
we get
\[
\int (\Psi_n \circ \tilde{\pi}) \, d\tilde{\mathbb{P}} = \int \Psi_n \, dm = \int \log\|\wedge^k F^{n}_{(\omega,x)}\| \, dm.
\]
Combining this with~\eqref{eq:proof-ineq-1} and observing that $\int \hat\phi_{k,n}\,d\hat\nu = n \int \hat\phi_k\,d\hat\nu$ because of $\hat\nu \in \mathcal{I}(\hat P_k)$, we obtain
\[
\int \hat\phi_k\,d\hat\nu \leq \frac{1}{n} \int \log\|\wedge^k F^{n}_{(\omega,x)}\| \, dm.
\]
{Letting \(n\to\infty\), the right-hand side converges to \(\Sigma_k(m)\). Hence,
$\int \hat\phi_k\,d\hat\nu\le \Sigma_k(m)$
for every invariant lift \(\hat\nu\). Taking the supremum over \(\hat\nu\in\mathcal I_\nu(\hat P_k)\) yields
$
\Lambda_k(\nu)\le \Sigma_k(m)$.}

\subsubsection{Inequality $\Sigma_k(m)\leq\Lambda_k(\nu)$}

If $\Sigma_k(m)=-\infty$, there is nothing to prove. Assume from now on that
$\Sigma_k(m)>-\infty$. The reverse inequality will follow from an elementary
averaging estimate on finite-dimensional projective spaces.

We first introduce the probability measure used in that estimate. Let $V$ be
a $D$-dimensional Euclidean space and denote by
$S(V):=\{v\in V:\|v\|=1\}$
its unit sphere. Let $\sigma_V$ be the normalized Riemannian volume measure on
$S(V)$, so that $\sigma_V(S(V))=1$, and consider the canonical quotient map
$q_V:S(V)\longrightarrow P(V)$ given by $q_V(v)=[v]$. We define
\begin{equation}\label{eq:def-projective-spherical-measure}
\rho_V:=(q_V)_*\sigma_V.
\end{equation}
Thus, for every bounded Borel function $g:P(V)\to\mathbb R$,
\[
\int_{P(V)}g([v])\,d\rho_V([v])
=
\int_{S(V)}g([v])\,d\sigma_V(v).
\]
The measure $\rho_V$ is a Borel probability measure on $P(V)$. It is invariant
under the natural action of the orthogonal group $O(V)$, because $\sigma_V$ is
$O(V)$-invariant. In particular, $\rho_V$ does not depend on the choice of an
orthonormal basis of $V$. Equivalently, it is the canonical probability measure
on the compact homogeneous space
\[
P(V)\simeq O(V)/(O(1)\times O(D-1)).
\]
This is the projective measure obtained from normalized spherical measure; see,
for instance, \cite[Chapter~3]{Mattila1995} for the corresponding spherical
measure construction.

\begin{lemma}\label{lem:projective-averaging}
For every integer $D\geq1$, there exists a constant $C_D<\infty$ with the
following property. If $V$ and $V'$ are $D$-dimensional Euclidean spaces and
$L:V\to V'$ is linear, then
\begin{equation*}\label{eq:projective-averaging}
\log\|L\|
\leq
C_D+
\int
\log\frac{\|Lv\|}{\|v\|}
\,d\rho_V([v]).
\end{equation*}
Here the integrand is independent of the nonzero representative $v$ of $[v]$.
If $L=0$, both sides are understood as $-\infty$.
\end{lemma}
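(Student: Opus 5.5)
Since the statement is invariant under composing $L$ with orthogonal maps and under rescaling, I would first reduce to a normal form. Via a singular value decomposition, choose orthonormal bases of $V$ and $V'$ so that $L$ becomes a diagonal map with singular values $s_1\ge s_2\ge\dots\ge s_D\ge0$. The measure $\rho_V$ is $O(V)$-invariant, and $\|Lv\|$ is unchanged by composing $L$ on the left with an isometry. Hence both sides of the inequality depend only on $(s_1,\dots,s_D)$. The case $L=0$ is covered by the stated convention, so assume $s_1=\|L\|>0$. Dividing $L$ by $s_1$ shifts both sides by $\log s_1$, so we may assume $\|L\|=1$.

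The key pointwise estimate is
\[
\|Lv\|\ge s_1|\langle v,e_1\rangle| = |\langle v,e_1\rangle|,
\]
where $e_1$ is the top right singular vector. This holds because $\|Lv\|^2=\sum_i s_i^2\langle v,e_i\rangle^2\ge s_1^2\langle v,e_1\rangle^2$. Therefore, for unit vectors $v$,
\[
\int\log\frac{\|Lv\|}{\|v\|}\,d\rho_V([v])\ \ge\ \int_{S(V)}\log|\langle v,e_1\rangle|\,d\sigma_V(v).
\]
By rotation invariance, the right-hand side is a number $-C_D$ that depends only on $D$. So the claim holds with this $C_D$, provided $C_D<\infty$.

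The main point is to verify that $\int_{S^{D-1}}\log|v_1|\,d\sigma(v)>-\infty$. For $D=1$ the integrand is $\log 1=0$. For $D\ge2$, the pushforward of $\sigma$ under $v\mapsto v_1$ has density proportional to $(1-u^2)^{(D-3)/2}$ on $[-1,1]$. Near $u=0$ this density is bounded, and $\log|u|$ is integrable there. Near $u=\pm1$ the density is integrable (the exponent is $>-1$) while $\log|u|$ is bounded. Hence the integral is finite.

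Alternatively, one can avoid the density computation by using the symmetry $\int\log|v_1|\,d\sigma=\frac1D\int\sum_i\log|v_i|\,d\sigma$. Together with the finiteness of $\int\log|g_1|$ for a Gaussian vector $g$, writing $v=g/\|g\|$ and using $\mathbb E\log\|g\|<\infty$ then gives the bound.

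We also need the integrand to be well defined. The function $[v]\mapsto\log(\|Lv\|/\|v\|)$ is bounded above by $\log\|L\|$, so its integral exists in $[-\infty,\infty)$. This justifies integrating the pointwise lower bound. The set where $\|Lv\|=0$ is contained in $\{v_1=0\}$, which has measure zero, so it causes no additional difficulty.
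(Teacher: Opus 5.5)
Your proof is correct and follows the paper's argument. It uses the same singular-value bound $\|Lv\|\ge\|L\|\,|\langle v,e_1\rangle|$, integrated against $\rho_V$, with orthogonal invariance making $C_D=-\int\log|\langle v,e_1\rangle|\,d\sigma_V(v)$ depend only on $D$. The only (cosmetic) difference is how you check that $\log|v_1|$ is integrable: you use the explicit marginal density $(1-u^2)^{(D-3)/2}$ or a Gaussian representation, whereas the paper uses the tail bound $\sigma_V(\{|a_1|<r\})\le A_D r$ together with the layer-cake formula.
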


\begin{proof}
The assertion is immediate when $L=0$, so assume that $L\neq0$. Let
\[
\|L\|=\sigma_1(L)\geq\sigma_2(L)\geq\cdots\geq\sigma_D(L)\geq0
\]
be the singular values of $L$. Thus there exist orthonormal bases
$e_1,\dots,e_D$ of $V$ and $e'_1,\dots,e'_D$ of $V'$ such that
$Le_i=\sigma_i(L)e'_i$, $i=1,\dots,D$.  Let $v\in S(V)$ and write
\[
v=\sum_{i=1}^D a_i(v)e_i.
\]
Then
\[
\|Lv\|^2
=
\sum_{i=1}^D \sigma_i(L)^2|a_i(v)|^2
\geq
\sigma_1(L)^2|a_1(v)|^2.
\]
Therefore $\|Lv\|\geq |a_1(v)|\,\|L\|$, 
and hence, with the convention $\log0=-\infty$,
\begin{equation}\label{eq:coordinate-lower-bound}
\log\|Lv\|
\geq
\log\|L\|+\log|a_1(v)|.
\end{equation}

The function $v\mapsto |a_1(v)|=|\langle v,e_1\rangle|$ is even, so it descends
to a well-defined Borel function on $P(V)$. Moreover,
\[
\int \bigl|\log|a_1(v)|\bigr|\,d\sigma_V(v)<\infty.
\]
Indeed, if $D=1$, then $|a_1(v)|=1$ and the integral is zero. If $D\geq2$,
standard spherical coordinates give a constant $A_D<\infty$ such that
\[
\sigma_V\bigl(\{v\in S(V):|a_1(v)|<r\}\bigr)\leq A_Dr,
\qquad 0<r<1.
\]
Hence,
\begin{align*}
\int_{S(V)}-\log|a_1(v)|\,d\sigma_V(v)
&=
\int_0^\infty
\sigma_V\bigl(\{|a_1|<e^{-s}\}\bigr)\,ds \leq
A_D\int_0^\infty e^{-s}\,ds
<\infty.
\end{align*}
By the orthogonal invariance of $\sigma_V$, the number
\[
c_D
:=
\int \log|\langle v,e_1\rangle|\,d\sigma_V(v)
=
\int \log|a_1([v])|\,d\rho_V([v])
\]
depends only on $D$, and $c_D>-\infty$. Integrating
\eqref{eq:coordinate-lower-bound} and using \eqref{eq:def-projective-spherical-measure},
we obtain
\[
\int 
\log\frac{\|Lv\|}{\|v\|}
\,d\rho_V([v])
\geq
\log\|L\|+c_D.
\]
The conclusion follows with $C_D:=-c_D$.
\end{proof}

We apply Lemma~\ref{lem:projective-averaging} to the linear map
$L=\wedge^kF^n_{(\omega,x)}
:
\wedge^k\mathcal E_x
\longrightarrow
\wedge^k\mathcal E_{f_\omega^n(x)}$. 
Set
\[
D_k:=\dim(\wedge^k\mathbb R^d)=\binom{d}{k}.
\]
For each $x\in X$, let $\rho_{k,x}$ be the probability measure on
$P(\wedge^k\mathcal E_x)$ defined by \eqref{eq:def-projective-spherical-measure}
using the Euclidean structure of the fiber $\wedge^k\mathcal E_x$.


Lemma~\ref{lem:projective-averaging} now gives, for every $n\geq1$ and
$m$-a.e. $(\omega,x)$,
\begin{equation}\label{eq:projective-averaging-cocycle}
\log\|\wedge^kF^n_{(\omega,x)}\|
\leq
C_{D_k}
+
\int
\log\frac{\|\wedge^kF^n_{(\omega,x)}v\|}{\|v\|}
\,d\rho_{k,x}([v]).
\end{equation}
The positive parts of the functions in
\eqref{eq:projective-averaging-cocycle} are integrable. Indeed, for every
nonzero $v\in\wedge^k\mathcal E_x$,
\begin{align*}
\log^+
\frac{\|\wedge^kF^n_{(\omega,x)}v\|}{\|v\|}
\leq
\log^+\|\wedge^kF^n_{(\omega,x)}\| \leq
k\log^+\|F^n_{(\omega,x)}\| \leq
k\sum_{j=0}^{n-1}\log^+\|F_{\omega_j,x_j}\|,
\end{align*}
where $x_j=f_\omega^j(x)$. The last function is $m$-integrable by
\eqref{eq:integrability-m} and the invariance of $m$. Consequently, conditional
expectations and Fubini's theorem may be applied to the extended-real-valued
functions appearing below.

Conditioning~\eqref{eq:projective-averaging-cocycle} on $Z_0=(\omega_0,x_0)=(t,x)$ gives
\begin{align*}
&\mathbb E\left[
\log\|\wedge^kF^n_{(\omega,x)}\|
\;\middle|\;
Z_0=(t,x)
\right] \leq
C_{D_k}
+
\int
\mathbb E\left[
\log\frac{\|\wedge^kF^n_{(\omega,x)}v\|}{\|v\|}
\;\middle|\;
Z_0=(t,x)
\right]
\,d\rho_{k,x}([v]).
\end{align*}
For every fixed $[v]\in P(\wedge^k\mathcal E_x)$, Lemma~\ref{lem:finite-horizon-expansion}
identifies the conditional expectation in the integrand with
$\hat\phi_{k,n}(t,x,[v])$. Hence
\begin{align*}
\mathbb E\left[
\log\|\wedge^kF^n_{(\omega,x)}\|
\;\middle|\;
Z_0=(t,x)
\right]
&\leq
C_{D_k}
+
\int
\hat\phi_{k,n}(t,x,[v])
\,d\rho_{k,x}([v])\\
&\leq
C_{D_k}
+
\sup_{[v]\in P(\wedge^k\mathcal E_x)}
\hat\phi_{k,n}(t,x,[v])=
C_{D_k}+\mathcal M\hat\phi_{k,n}(t,x).
\end{align*}
Integrating with respect to the initial law $\nu$ and using the construction of
the driving measure~$m$, we obtain
\begin{equation}\label{eq:integrated-projective-average}
\int
\log\|\wedge^kF^n_{(\omega,x)}\|
\,dm(\omega,x)
\leq
C_{D_k}
+
\int \mathcal M\hat\phi_{k,n}\,d\nu.
\end{equation}
Divide \eqref{eq:integrated-projective-average} by $n$ and let $n\to\infty$.
By the definition of $\Sigma_k(m)$ and of $\Lambda_k(\nu)$,
\[
\Sigma_k(m)
\leq
\lim_{n\to\infty}
\frac1n\int_Z\mathcal M\hat\phi_{k,n}\,d\nu
=
\Lambda_k(\nu).
\]
This proves the desired inequality.

\subsubsection{Lyapunov exponent and pointwise Lyapunov exponents}
\label{ss:ergodic-decomposition}
The previous subsections show that $\Sigma_k(m)=\Lambda_k(\nu)$ for every $k=1,\dots,d$. Hence, by the convention used to define Lyapunov exponents from the partial sums,
\[
\lambda_k(m)=\lambda_k(\nu)
\qquad\text{for every }k=1,\dots,d.
\]

Now we show the pointwise version. Recall that for every \(k=1,\dots,d\),
\[
\Sigma_k(\omega,x):=\lim_{n\to\infty}\frac1n\log\|\wedge^kF^n_{(\omega,x)}\|
\qquad\text{for \(m\)-a.e.\ }(\omega,x),
\]
with \(\Sigma_0(\omega,x)=0\), and
\[
\Lambda_k(t,x):=\lim_{n\to\infty}\frac1n(\mathcal M\hat\phi_{k,n})(t,x)
\qquad\text{for \(\nu\)-a.e.\ }(t,x),
\]
with \(\Lambda_0(t,x)=0\). The corresponding exponents \(\lambda_k(t,x)\)  and \(\lambda_k(\omega,x)\)  are defined 
from partial sums using $\Lambda_k(t,x)$ and $\Sigma_k(\omega,x)$  as  in~\eqref{eq:def-Lyapunov-law} and~\eqref{eq:def-lyapunov-sigma} respectively.

Let $\nu=\int \nu_\alpha\,d\eta(\alpha)$ be the ergodic decomposition of $\nu$ into $P$-ergodic
$P^*$-invariant probability measures. For each $\alpha$, let $m_\alpha$ be the driving measure on
$\Omega\times X$ induced by $\nu_\alpha$ through the same construction as $m$; by linearity of the
construction one has the corresponding decomposition
\[
m=\int m_\alpha\,d\eta(\alpha)
\qquad\text{and}\qquad
(\pi_0)_*m_\alpha=\nu_\alpha.
\]

Fix $\alpha$. Since $\nu_\alpha$ is $P$-ergodic, Kingman {subadditive} ergodic theorem for Markov operators~\cite[Thm.~D]{BM} yields that
$\Lambda_k(t,x)= \Lambda_k(\nu_\alpha)$ for $\nu_\alpha$-a.e.\ $(t,x)$.
On the other hand, the associated $m_\alpha$ is $\mathcal F$-ergodic, hence Kingman's theorem also gives
$\Sigma_k(\omega,x)= \Sigma_k(m_\alpha)$ for $m_\alpha$-a.e.\ $(\omega,x)$.
{Applying the equality of growth rates proved above, we obtain $\Sigma_k(m_\alpha)=\Lambda_k(\nu_\alpha)$ for every $k=1,\dots,d$.}
Therefore, for $m_\alpha$-a.e.\ $(\omega,x)$,
\[
\Sigma_k(\omega,x)=\Sigma_k(m_\alpha)=\Lambda_k(\nu_\alpha)=\Lambda_k(\omega_0,x),
\]
because $(\omega_0,x)$ is $\nu_\alpha$-distributed under $m_\alpha$. Integrating over \(\alpha\) yields the equality for \(m\)-a.e.\ \((\omega,x)\). The corresponding statement for \(\lambda_k\) follows by the convention used to define the exponents from the partial sums.

\subsection{Grassmannian formulation}

It is often convenient to formulate Theorem~\ref{mainthm:B} and Corollary~\ref{maincor:B} on the Grassmann bundle. Let $\pi_{G_k}:\mathrm G_k(\mathcal E)\to X$
be the bundle of \(k\)-dimensional subspaces of the fibers of \(\mathcal E\). The random linear bundle morphism \(F\) induces
\[
\hat F_{{{G}_k},s}(t,x,W)\coloneqq (s,f_t(x),F_{t,x}W)
\]
whenever \(W\in \mathrm{G}_k(\mathcal E_x)\) and
\(\dim(F_{t,x}W)=k\). We adjoin an isolated equivariant cemetery
section \(\Delta\) and send every \((t,x,W)\) with
\(\dim(F_{t,x}W)<k\) to \(T\times\Delta\). We call this set of points the \emph{degeneracy locus} of $\hat F_{G_k}$. Associated with the Markov place-dependent random iteration of \(\hat F_{G_k}\), we introduce the analogous Markov operator $\hat P_{G_k}$ to~\eqref{eq:fully-place-Phat}.
We also consider the potential
\[
\hat\phi_{G_k}(t,x,W)\coloneqq \log \mathrm{Jac}_k(F_{t,x}|_W),  \quad \text{where } \  \mathrm{Jac}_k(F_{t,x}|_W):= \|\wedge^k (F_{t,x}|_W)\|
\]
with the convention \(\hat\phi_{G_k}=-\infty\) on \(T\times\Delta\) and whenever \(\dim(F_{t,x}W)<k\).

\begin{mainprop}\label{cor:met-linear-grassmann}
In the setting of Theorem~\ref{mainthm:B},
\begin{align*}
\sum_{i=1}^k \lambda_i(m)
&=  \lim_{n\to\infty} \frac{1}{n} \int
\sup_{W\in\mathrm{G}_k(\mathcal E_x)}
\mathbb E\biggl[
\log \mathrm{Jac}_k(F^n_{(\omega,x)}|_W )
\,\biggm|\,
(\omega_0,x,W)
\biggr] \, d\nu \\
&=
\sup\left\{\int \hat \phi_{G_k}\,d\hat\nu:
\text{$\hat\nu \in \mathcal{I}(\hat P_{G_k})$, \ $(\pi_{G_k})_*\hat\nu=\nu$}\right\}
\end{align*} 
where the limit may be replaced by the infimum.
Moreover, the supremum is attained on an ergodic $\hat P_{{G}_k}$-invariant measure lift of $\nu$, and any maximizing measure has zero mass on the degeneracy locus whenever $\lambda_k(m)>-\infty$. Also, if $\nu$ is ergodic, then
\begin{align*}
\sum_{i=1}^k &\lambda_i(m)
=
\lim_{n\to\infty}
\sup_{W\in\mathrm{G}_k(\mathcal E_x)}
\frac1n
\mathbb E\biggl[
\log \mathrm{Jac}_k(F^n_{(\omega,x)}|_W)
\,\biggm|\,
(\omega_0,x,W)
\biggr]
\quad\text{for \(\nu\)-a.e.\ \((\omega_0,x)\).}
\end{align*}
\end{mainprop}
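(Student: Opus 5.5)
The plan is to repeat the proof of Proposition~\ref{mainprop:kingman1} and Theorem~\ref{mainthm:B} with the projective bundle $\hat{\mathcal E}_k$ replaced by the Grassmann bundle $\mathrm G_k(\mathcal E)$ with the cemetery section adjoined.

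\emph{Step 1: reduce to the abstract framework.} First, Lemma~\ref{lem:markov-factor-preserves-Fpi}, applied with the kernel $\kappa((t,x),\cdot)=Q_{f_t(x)}(t,\cdot)$, shows that $\hat P_{G_k}$ is $\sigma$-Markov and satisfies $\hat P_{G_k}\circ\pi_{G_k}^*=\pi_{G_k}^*\circ P$. Next, I would check the analogue of Lemma~\ref{lem:regularity-for-VP} with
\[
K_{G_k}:=\{(t,x,W):\dim F_{t,x}W<k\}\cup (T\times\Delta).
\]
\begin{enumerate}[label=(\roman*)]
\item The function $W\mapsto \log\|\wedge^k(F_{t,x}|_W)\|$ is upper semicontinuous on $\mathrm G_k(\mathcal E_x)$. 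Indeed, $\|\wedge^k(F|_W)\|=\|\wedge^kF\,w\|/\|w\|$ for any decomposable $w$ representing $W$ under the Plücker embedding, so the function is continuous and vanishes exactly on $\{\dim FW<k\}$.
\item That set is closed, so $\hat P_{G_k}\psi$ is fiberwise continuous off $K_{G_k}$. This uses continuity of $W\mapsto F_{t,x}W$ where the rank is $k$, together with dominated convergence.
\item $(\mathcal M\hat\phi_{G_k})^+\le k\log^+\|F_{t,x}\|$, which is $\nu$-integrable.
\end{enumerate}
With these, Theorem~\ref{mainthm:A}, Theorem~\ref{thm:VP} and Kingman's theorem~\cite[Thm.~D]{BM} give a $P$-invariant function $\Lambda_{G_k}$. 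They also give
\[
\Lambda_{G_k}(\nu)=\lim_n\tfrac1n\int\mathcal M\hat\phi_{G_k,n}\,d\nu=\inf_n\tfrac1n\int\mathcal M\hat\phi_{G_k,n}\,d\nu,
\]
the variational formula with an ergodic maximizer, and the statement that maximizing lifts give zero mass to $K_{G_k}$ when $\Lambda_{G_k}(\nu)>-\infty$.

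\emph{Step 2: identify the iterates.} The telescoping argument of Lemma~\ref{lem:finite-horizon-expansion} carries over because of multiplicativity of Jacobians on invariant subspaces: $\mathrm{Jac}_k(F^n|_W)=\prod_j\mathrm{Jac}_k(F_{\omega_j,x_j}|_{W_j})$ with $W_{j+1}=F_{\omega_j,x_j}W_j$. This gives
\[
\hat\phi_{G_k,n}(t,x,W)=\mathbb E\big[\log\mathrm{Jac}_k(F^n_{(\omega,x)}|_W)\mid(\omega_0,x,W)\big],
\]
which yields the first displayed formula and, when $\nu$ is ergodic, the last pointwise formula.

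\emph{Step 3: show $\Lambda_{G_k}(\nu)=\Sigma_k(m)$.} The upper bound $\le$ follows exactly as before, from $\mathrm{Jac}_k(L|_W)\le\|\wedge^kL\|$ and $\hat\nu$-invariance. The reverse inequality is where I expect the main obstacle: decomposable vectors form a measure-zero subset of $\PP(\wedge^k\mathcal E_x)$, so Lemma~\ref{lem:projective-averaging} cannot be applied directly. I would replace it with a Grassmannian averaging estimate. Take the $O(d)$-invariant probability on $\mathrm G_k(V)$ and let $E_k$ be the top-$k$ singular subspace of $L^*$-side, i.e.\ of $L$. Then
\[
\mathrm{Jac}_k(L|_W)\ge\|\wedge^kL\|\cdot|\det(\mathrm{proj}_{E_k}|_W)|,
\]
and the aim is to prove $\int\log|\det(\mathrm{proj}_{E_k}|_W)|\,dW=:c_{d,k}>-\infty$. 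To get this, I would realize $W$ as the span of $k$ i.i.d.\ Gaussian vectors. The determinant then becomes a ratio of Gram determinants of Gaussian matrices, and these have finite logarithmic moments; by orthogonal invariance the constant $c_{d,k}$ depends only on $(d,k)$. With this estimate in hand, the conditioning and integration steps leading to \eqref{eq:integrated-projective-average} go through verbatim, giving $\Sigma_k(m)\le\Lambda_{G_k}(\nu)$. Combined with Theorem~\ref{mainthm:B}, this completes the proof.
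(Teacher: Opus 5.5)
Your proposal is correct and follows the paper's proof essentially step for step: factorization via Lemma~\ref{lem:markov-factor-preserves-Fpi}, the regularity check with the degeneracy locus $K_{G_k}$, telescoping of Jacobians, and a Grassmannian averaging estimate in which your $|\det(\mathrm{proj}_{E_k}|_W)|$ is exactly the paper's Pl\"ucker coordinate $|a_{I_0}(W)|$. There are two minor differences. You obtain $\Lambda_{G_k}(\nu)\le\Sigma_k(m)$ directly from $\mathrm{Jac}_k(L|_W)\le\|\wedge^kL\|$, whereas the paper pushes Grassmannian lifts into projective ones through the Pl\"ucker embedding and invokes Theorem~\ref{mainthm:B}. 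Your Gaussian Gram-determinant argument also proves the integrability of $\log|a_{I_0}|$, which the paper only asserts.
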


\begin{proof}[Proof of Proposition~\ref{cor:met-linear-grassmann}]

Fix $k\in\{1,\dots,d\}$. As in the statement, we use $\pi_{G_k}$ also for the projection $\pi_{G_k}:T\times \mathrm G_k(\mathcal E)\longrightarrow Z=T\times X$, $\pi_{G_k}(t,x,W)=(t,x)$,
with the isolated cemetery section adjoined. For $n\geq1$, set
\begin{equation}\label{eq:grassmann-additive-process}
\hat\phi_{G_k,n}
:=
\sum_{j=0}^{n-1}\hat P_{G_k}^{\,j}\hat\phi_{G_k},
\qquad
\phi_{G_k,n}
:=
\mathcal M\hat\phi_{G_k,n}.
\end{equation}
Thus,
\[
\phi_{G_k,n}(t,x)
=
\sup_{W\in \mathrm G_k(\mathcal E_x)}
\hat\phi_{G_k,n}(t,x,W).
\]
We first verify that Theorem~\ref{thm:VP} applies to the Grassmannian lift and then identify the corresponding growth rate with
\[
\Sigma_k(m):=\sum_{i=1}^k\lambda_i(m).
\]

\paragraph{\emph{The Grassmannian growth rate}}
Recall that
$$ \hat F_{G_k,s}(t,x,W)
=
(s,f_t(x),F_{t,x}W)$$
whenever $\dim(F_{t,x}W)=k$, and that the image is the cemetery section otherwise. Since
\[
\pi_{G_k}\circ\hat F_{G_k,s}
=
\bar f_s\circ\pi_{G_k},
\qquad
\bar f_s(t,x)=(s,f_t(x)),
\]
Lemma~\ref{lem:markov-factor-preserves-Fpi} gives
\begin{equation}\label{eq:grassmann-factorization}
\hat P_{G_k}\circ\pi_{G_k}^*
=
\pi_{G_k}^*\circ P.
\end{equation}

Let \(K_{G_k}\) denote the degeneracy locus of \(\hat F_{G_k}\),
namely the set of points \((t,x,W)\) for which
\(\dim(F_{t,x}W)<k\), together with the cemetery section. The
regularity verification is the same as in
Lemma~\ref{lem:regularity-for-VP}, but we record it here for
completeness. Fix \((t,x)\in Z\). On the fiber
\(\mathrm G_k(\mathcal E_x)\), the map
$ W\mapsto\operatorname{Jac}_k(F_{t,x}|_W)$
is continuous and nonnegative. Therefore
\[
K_{G_k}\cap\pi_{G_k}^{-1}(t,x)
=
\left\{
W:\operatorname{Jac}_k(F_{t,x}|_W)=0
\right\}
\sqcup\{\Delta_x\}
\]
is closed in the fiber. Consequently,
\[
W\longmapsto
\hat\phi_{G_k}(t,x,W)
=
\log \mathrm{Jac}_k(F_{t,x}|_W),
\]
with the convention $\log 0=-\infty$, is upper semicontinuous and equals $-\infty$ on $K_{G_k}$. Moreover, outside $K_{G_k}$ the map $W\mapsto F_{t,x}W$ is continuous. Hence, for every $\psi\in\mathscr C_b(T\times\mathrm G_k(\mathcal E))$, the function
\[
\hat P_{G_k}\psi(t,x,W)
=
\int
\psi\bigl(s,f_t(x),F_{t,x}W\bigr)
\,Q_{f_t(x)}(t,ds)
\]
is continuous in \(W\) outside \(K_{G_k}\), by dominated
convergence. Since the complement of \(K_{G_k}\) is open in each
fiber, every fiberwise discontinuity point of \(\hat P_{G_k}\psi\)
belongs to \(K_{G_k}\).

Finally, the classical singular-value characterization of exterior powers gives
\begin{align*}
\mathcal M\hat\phi_{G_k}(t,x)
&=
\sup_{W\in\mathrm G_k(\mathcal E_x)}
\log\mathrm{Jac}_k(F_{t,x}|_W)=
\log\|\wedge^kF_{t,x}\|.
\end{align*}
Therefore,
\[
(\mathcal M\hat\phi_{G_k})^+(t,x)
\leq
\log^+\|\wedge^kF_{t,x}\|
\leq
k\log^+\|F_{t,x}\|,
\]
and the last function is $\nu$-integrable by~\eqref{eq:integrability-nu}. Thus all the assumptions of Theorem~\ref{thm:VP} are satisfied. In particular, the sequence $(\phi_{G_k,n})_{n\geq1}$ is $P$-subadditive and its growth rate
\begin{equation}\label{eq:grassmann-growth-rate}
\Lambda_{G_k}(\nu)
:=
\lim_{n\to\infty}
\frac1n\int \phi_{G_k,n}\,d\nu
=
\inf_{n\geq1}
\frac1n\int \phi_{G_k,n}\,d\nu
\end{equation}
satisfies
\begin{equation}\label{eq:grassmann-VP}
\Lambda_{G_k}(\nu)
=
\sup\left\{
\int\hat\phi_{G_k}\,d\hat\nu:
\hat\nu\in\mathcal I(\hat P_{G_k}),\
(\pi_{G_k})_*\hat\nu=\nu
\right\}.
\end{equation}
The supremum is attained on an ergodic $(\hat P_{G_k})^*$-invariant lift of $\nu$.

We next identify the finite-time process in~\eqref{eq:grassmann-additive-process}. We use the same Markov-chain notation as in the proof of Lemma~\ref{lem:finite-horizon-expansion}. Starting from $(t,x,W)$, let
$(\omega_j,x_j,W_j)$, $j\geq0$,  be the lifted Grassmannian chain, where
\[
(\omega_0,x_0,W_0)=(t,x,W),
\qquad
x_{j+1}=f_{\omega_j}(x_j),
\qquad
W_{j+1}=F_{\omega_j,x_j}W_j,
\]
until the orbit reaches the cemetery section. By the definition of $\hat P_{G_k}$,
\begin{align*}
\hat\phi_{G_k,n}(t,x,W)
&=
\sum_{j=0}^{n-1}
\mathbb E\bigl[
\hat\phi_{G_k}(\omega_j,x_j,W_j)
\bigm|
(\omega_0,x_0,W_0)=(t,x,W)
\bigr]\\
&=
\mathbb E\biggl[
\sum_{j=0}^{n-1}
\hat\phi_{G_k}(\omega_j,x_j,W_j)
\biggm|
(\omega_0,x_0,W_0)=(t,x,W)
\biggr].
\end{align*}
If degeneration occurs before time $n$, both the sum and the logarithmic Jacobian below are interpreted as $-\infty$. Otherwise, the multiplicativity of Jacobians yields
\begin{align*}
\sum_{j=0}^{n-1}
\hat\phi_{G_k}(\omega_j,x_j,W_j)
&=
\sum_{j=0}^{n-1}
\log\mathrm{Jac}_k(F_{\omega_j,x_j}|_{W_j})
=\log\mathrm{Jac}_k(F^n_{(\omega,x)}|_W).
\end{align*}
Consequently,
\begin{equation}\label{eq:grassmann-finite-horizon}
\hat\phi_{G_k,n}(t,x,W)
=
\mathbb E\biggl[
\log\mathrm{Jac}_k(F^n_{(\omega,x)}|_W)
\biggm|
(\omega_0,x_0,W_0)=(t,x,W)
\biggr].
\end{equation}
Since the conditional law of the future depends on the initial lifted state only through $(t,x)$ and the initial plane $W$ is fixed, this is precisely the conditional expectation denoted in the statement by conditioning on $(\omega_0,x,W)$. Taking the supremum over the fiber gives
\begin{equation}\label{eq:grassmann-fiber-maximum}
\phi_{G_k,n}(t,x)
=
\sup_{W\in\mathrm G_k(\mathcal E_x)}
\mathbb E\biggl[
\log\mathrm{Jac}_k(F^n_{(\omega,x)}|_W)
\biggm|
(\omega_0,x,W)
\biggr].
\end{equation}
Thus the first limit appearing in Proposition~\ref{cor:met-linear-grassmann} is exactly the growth rate $\Lambda_{G_k}(\nu)$ defined in~\eqref{eq:grassmann-growth-rate}.

\paragraph{\em Comparison with the projective growth rate}
We first prove
\begin{equation}\label{eq:grassmann-less-projective}
\Lambda_{G_k}(\nu)\leq \Sigma_k(m).
\end{equation}
For each $x\in X$, consider the Pl\"ucker embedding
\[
\iota_x:\mathrm G_k(\mathcal E_x)
\longrightarrow
\mathbb P(\wedge^k\mathcal E_x),
\qquad
\iota_x(W)=[w_1\wedge\cdots\wedge w_k],
\]
where $w_1,\dots,w_k$ is any basis of $W$. This is well defined because a different basis changes the wedge product by a nonzero scalar. Extending $\iota_x$ by sending the cemetery point to the cemetery point, we obtain a fiber-preserving measurable map
\[
\iota:T\times\mathrm G_k(\mathcal E)
\longrightarrow
\hat Z_k,
\qquad
\iota(t,x,W)=(t,x,\iota_x(W)).
\]
The definitions of the projective and Grassmannian lifts imply
\begin{equation}\label{eq:plucker-intertwining}
\iota\circ\hat F_{G_k,s}
=
\hat F_{k,s}\circ\iota
\qquad\text{for every }s\in T,
\end{equation}
and, if $\xi_W=w_1\wedge\cdots\wedge w_k$ is any nonzero decomposable representative of $W$, then
\begin{equation}\label{eq:plucker-potentials}
\hat\phi_{G_k}(t,x,W)
=
\log\frac{\|\wedge^kF_{t,x}\xi_W\|}{\|\xi_W\|}
=
\hat\phi_k\bigl(\iota(t,x,W)\bigr).
\end{equation}

Let $\hat\nu_G\in\mathcal I(\hat P_{G_k})$ satisfy $(\pi_{G_k})_*\hat\nu_G=\nu$, and set $\hat\nu:=\iota_*\hat\nu_G$. The intertwining relation~\eqref{eq:plucker-intertwining} implies that $\hat\nu\in\mathcal I(\hat P_k)$, and clearly $(\pi_k)_*\hat\nu=\nu$. Moreover, by~\eqref{eq:plucker-potentials},
\[
\int\hat\phi_{G_k}\,d\hat\nu_G
=
\int\hat\phi_k\,d\hat\nu.
\]
The variational formula in Theorem~\ref{mainthm:B} therefore gives
\[
\int\hat\phi_{G_k}\,d\hat\nu_G
\leq
\Lambda_k(\nu)
=
\Sigma_k(m).
\]
Taking the supremum over all admissible $\hat\nu_G$ and using~\eqref{eq:grassmann-VP} proves~\eqref{eq:grassmann-less-projective}.

\paragraph{\em The reverse inequality}
We now prove
\begin{equation}\label{eq:projective-less-grassmann}
\Sigma_k(m)\leq\Lambda_{G_k}(\nu).
\end{equation}
The key point is the following finite-dimensional estimate.

\begin{lem}\label{claim:grassmannian-averaging}
There is a constant $C_{d,k}<\infty$, depending only on $d$ and $k$, such that for every pair of $d$-dimensional Euclidean spaces $V,V'$ and every linear map $L:V\to V'$,
\begin{equation}\label{eq:grassmannian-averaging}
\log\|\wedge^kL\|
\leq
C_{d,k}
+
\int_{\mathrm G_k(V)}
\log\mathrm{Jac}_k(L|_W)\,d\rho_V(W),
\end{equation}
where $\rho_V$ is the normalized orthogonally invariant probability measure on $\mathrm G_k(V)$. If $\operatorname{rank}L<k$, both sides are understood as $-\infty$.
\end{lem}

\begin{proof}
If $\operatorname{rank}L<k$, then $\wedge^kL=0$ and $\mathrm{Jac}_k(L|_W)=0$ for every $W\in\mathrm G_k(V)$, so there is nothing to prove. Assume that $\operatorname{rank}L\geq k$. Let
\[
\sigma_1(L)\geq\cdots\geq\sigma_d(L)\geq0
\]
be the singular values of $L$. Choose orthonormal singular-vector bases $e_1,\dots,e_d$ of $V$ and $e'_1,\dots,e'_d$ of $V'$ such that $Le_i=\sigma_i(L)e'_i$.
For a multi-index $I=(i_1<\cdots<i_k)$, write
\[
e_I=e_{i_1}\wedge\cdots\wedge e_{i_k},
\qquad
e'_I=e'_{i_1}\wedge\cdots\wedge e'_{i_k}.
\]
If $W\in\mathrm G_k(V)$, choose an orthonormal basis $w_1,\dots,w_k$ of $W$ and put
$\xi_W=w_1\wedge\cdots\wedge w_k$.
Then $\|\xi_W\|=1$, $\xi_W$ is determined up to sign, and it admits an expansion
\[
\xi_W=\sum_{|I|=k}a_I(W)e_I,
\qquad
\sum_{|I|=k}|a_I(W)|^2=1.
\]
Let $I_0=(1,\dots,k)$. Since
\[
\wedge^kL(e_{I_0})
=
\sigma_1(L)\cdots\sigma_k(L)e'_{I_0},
\qquad
\|\wedge^kL\|
=
\sigma_1(L)\cdots\sigma_k(L),
\]
we obtain
\begin{align*}
\mathrm{Jac}_k(L|_W)
=
\|\wedge^kL(\xi_W)\|
\geq
|a_{I_0}(W)|\,\sigma_1(L)\cdots\sigma_k(L)=
|a_{I_0}(W)|\,\|\wedge^kL\|.
\end{align*}
Hence
\[
\log\mathrm{Jac}_k(L|_W)
\geq
\log\|\wedge^kL\|
+
\log|a_{I_0}(W)|.
\]
The function $W\mapsto a_{I_0}(W)$ is a nonzero Pl\"ucker coordinate. Its absolute value is well defined independently of the choice of the sign of $\xi_W$, and $\log|a_{I_0}|$ is integrable with respect to the smooth invariant probability measure $\rho_V$. By orthogonal invariance, the number
\[
c_{d,k}
:=
\int_{\mathrm G_k(V)}
\log|a_{I_0}(W)|\,d\rho_V(W)
\]
depends only on $d$ and $k$, and $c_{d,k}>-\infty$. Integrating the preceding inequality and setting $C_{d,k}:=-c_{d,k}$ proves~\eqref{eq:grassmannian-averaging}.
\end{proof}

We apply Lemma~\ref{claim:grassmannian-averaging} to
$L=F^n_{(\omega,x)}:
\mathcal E_x\longrightarrow\mathcal E_{f^n_\omega(x)}$.
Choose a Borel orthonormal frame of $\mathcal E$ and use it to identify every fiber $\mathcal E_x$ measurably with $\mathbb R^d$. The normalized invariant measure on $\mathrm G_k(\mathbb R^d)$ then induces a measurable family $(\rho_x)_{x\in X}$, where $\rho_x$ is the normalized orthogonally invariant probability measure on $\mathrm G_k(\mathcal E_x)$. For every $n\geq1$ and $m$-a.e. $(\omega,x)$, Lemma~\ref{claim:grassmannian-averaging} gives
\begin{equation}\label{eq:averaging-applied-to-cocycle}
\log\|\wedge^kF^n_{(\omega,x)}\|
\leq
C_{d,k}
+
\int_{\mathrm G_k(\mathcal E_x)}
\log\mathrm{Jac}_k(F^n_{(\omega,x)}|_W)\,d\rho_x(W).
\end{equation}
The positive parts of the functions in~\eqref{eq:averaging-applied-to-cocycle} are integrable. Indeed,
\[
\log^+\mathrm{Jac}_k(F^n_{(\omega,x)}|_W)
\leq
\log^+\|\wedge^kF^n_{(\omega,x)}\|
\leq
k\sum_{j=0}^{n-1}\log^+\|F_{\omega_j,x_j}\|,
\]
and the last function is $m$-integrable by~\eqref{eq:integrability-m} and the invariance of $m$. Therefore we may take conditional expectations and apply Fubini's theorem to the extended-real-valued functions involved. Conditioning~\eqref{eq:averaging-applied-to-cocycle} on $Z_0=(\omega_0,x_0)=(t,x)$, and then using~\eqref{eq:grassmann-finite-horizon}, gives
\begin{align*}
\mathbb E\bigl[
\log\|\wedge^kF^n_{(\omega,x)}\|
\bigm|
Z_0=(t,x)
\bigr]
&\leq
C_{d,k}
+
\int
\mathbb E\biggl[
\log\mathrm{Jac}_k(F^n_{(\omega,x)}|_W)
\biggm|
(\omega_0,x,W)
\biggr]
\,d\rho_x(W)\\
&\leq
C_{d,k}
+
\sup_{W\in\mathrm G_k(\mathcal E_x)}
\mathbb E\biggl[
\log\mathrm{Jac}_k(F^n_{(\omega,x)}|_W)
\biggm|
(\omega_0,x,W)
\biggr]\\
&=
C_{d,k}+\phi_{G_k,n}(t,x).
\end{align*}
Integrating with respect to $\nu$ and using the construction of the driving measure $m$, we obtain
\begin{equation}\label{eq:grassmann-integrated-comparison}
\int
\log\|\wedge^kF^n_{(\omega,x)}\|\,dm
\leq
C_{d,k}
+
\int\phi_{G_k,n}\,d\nu.
\end{equation}
Divide~\eqref{eq:grassmann-integrated-comparison} by $n$ and let $n\to\infty$. By the classical exterior-power formula and~\eqref{eq:grassmann-growth-rate},
$\Sigma_k(m)
\leq
\Lambda_{G_k}(\nu)$,
which proves~\eqref{eq:projective-less-grassmann}.

Combining~\eqref{eq:grassmann-less-projective} and~\eqref{eq:projective-less-grassmann}, we conclude that
\begin{equation}\label{eq:grassmann-projective-equality}
\Lambda_{G_k}(\nu)
=
\Sigma_k(m)
=
\sum_{i=1}^k\lambda_i(m).
\end{equation}
Together with~\eqref{eq:grassmann-growth-rate},~\eqref{eq:grassmann-VP}, and~\eqref{eq:grassmann-fiber-maximum}, this proves the two formulas in the first part of Proposition~\ref{cor:met-linear-grassmann}, as well as the assertion that the limit may be replaced by the infimum and that the supremum is attained on an ergodic invariant lift.

If $\nu$ is ergodic, Theorem~\ref{mainthm:A} and Kingman's theorem for Markov operators~\cite[Thm.~D]{BM} applied to the $P$-subadditive sequence $(\phi_{G_k,n})_{n\geq1}$ give
\[
\lim_{n\to\infty}
\frac1n\phi_{G_k,n}(t,x)
=
\Lambda_{G_k}(\nu)
=
\sum_{i=1}^k\lambda_i(m)
\qquad
\text{for $\nu$-a.e. $(t,x)$}.
\]
Using~\eqref{eq:grassmann-fiber-maximum}, this is exactly the pointwise formula in the proposition.

Finally, suppose that $\lambda_k(m)>-\infty$. Since the Lyapunov exponents are ordered,
\[
\lambda_1(m)\geq\cdots\geq\lambda_k(m)>-\infty,
\]
and therefore $\Sigma_k(m)>-\infty$. By~\eqref{eq:grassmann-projective-equality}, $\Lambda_{G_k}(\nu)>-\infty$. The last assertion of Theorem~\ref{thm:VP} then implies that every maximizing lift in~\eqref{eq:grassmann-VP} gives zero mass to the degeneracy locus $K_{G_k}$. This completes the proof of Proposition~\ref{cor:met-linear-grassmann}.
\end{proof}

\section{Bernoulli random bundle morphisms}
\label{sec:Prop-IV}

Consider a Bernoulli measure $\mathbb{P}=p^\mathbb{N}$ on $\Omega=T^\mathbb{N}$.
Hence the Markov operator~\eqref{eq:fully-place-P} becomes
\[
P\varphi(t,x)=\int \varphi\bigl(s,f_t(x)\bigr)\,dp(s),
\qquad
(t,x)\in Z=T\times X,\ \varphi\in B(Z).
\]
Moreover, by~\cite[Thm.~3.4 and Lem.~3.7]{BMNNT}, every \(P^*\)-invariant measure \(\nu\) on \(Z\) is of the form \(\nu=p\times\mu\), where \(\mu\) is \emph{\(f\)-stationary}.  That is, $\mu=\int (f_t)_*\mu  \, dp$. We stress that \(\mu\) depends implicitly on \(p\).  We then consider the random map
$$
\bar F_k: T\times \hat{\mathcal{E}}_k \to \hat{\mathcal{E}}_k, \quad \bar F_{k,t}(x,[v])=(f_t(x),[\wedge^kF_{t,x}v]),
$$
with $[\wedge^kF_{t,x}v]=\Delta_{f_t(x)}$ if $v\in \mathrm{ker}(\wedge^kF_{t,x})$ or $[v]=\Delta_x$, and the annealed potential
\begin{equation}\label{eq:bernoulli-phik}
\bar\phi_k(x,[v])
:=
\int
\hat \phi_k(t,x,[v])
\,dp\in[-\infty,\infty),
\qquad
(x,[v])\in \hat{\mathcal E}_k.
\end{equation}
A probability measure \(\hat\mu\) on \(\hat{\mathcal E}_k\) is called \emph{\(\bar F_k\)-stationary} if $\hat \mu = \int  (\bar F_{k,t})_*\hat\mu \, dp$.

Let \(\mu\) be an \(f\)-stationary measure satisfying~\eqref{eq:integrability-nu} with respect to \(\nu=p\times\mu\). Applying the subadditive ergodic theorem for Markov operators~\cite[Thm.~D]{BM}
yields the Lyapunov exponents $$\lambda_1(\mu)\ge\cdots\ge\lambda_d(\mu) \quad \text{with} \quad \lambda_k(\mu)=\int \lambda_k(x)\, d\mu \  \ \text{for $k=1,\dots,d$}
$$
where, setting \(\|\wedge^0F^n_{(\bar\omega,x)}\|:=1\), if $\lambda_k(x)>-\infty$, then
\begin{equation} \label{eq:annealead_exponents}
\lambda_k(x)=\lim_{n\to\infty}\frac1n
\int
\log
\frac{\|\wedge^kF^n_{(\omega,x)}\|}
{\|\wedge^{k-1}F^n_{(\omega,x)}\|}
\,d\mathbb P
\qquad\text{for \(\mu\)-a.e.\ \(x \in X\)}.
\end{equation}
The next result shows that these exponents coincide with those obtained from the driving measure \(m=\mathbb P\times\mu\).

\begin{mainprop}\label{prop:bernoulli-variational}
Let \(F:T\times \mathcal E\to\mathcal E\) be a random linear bundle morphism covering a random map \(f:T\times X\to X\). Consider a probability measure $p$ on \(T\), and let \(\mu\) be an \(f\)-stationary probability measure satisfying~\eqref{eq:integrability-nu} with \(\nu=p\times\mu\). Let \(m=\mathbb P\times\mu\) be the corresponding driving measure on \(\Omega\times X\). Then,
\[
\lambda_k(\omega,x)=\lambda_k(x)
\quad\text{for \(m\)-a.e.\ \((\omega,x)\),}
\qquad
\lambda_k(m)=\lambda_k(\mu),
\quad
k=1,\dots,d,
\]
and

\begin{equation}\label{eq:bernoilli-VP}
\sum_{i=1}^k\lambda_i(m)
=
\sup\Bigl\{
\int \bar\phi_k\,d\hat\mu:
\hat\mu \ \text{is $\bar{F}_k$-stationary and }
(\pi_k)_*\hat\mu=\mu
\Bigr\}.
\end{equation}
Moreover, the supremum is attained by an ergodic \(\bar F_k\)-stationary lift \(\hat\mu\) of \(\mu\), and any maximizing measure has zero mass on the degeneracy locus whenever \(\lambda_k(m)>-\infty\).
\end{mainprop}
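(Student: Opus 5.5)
Our plan is to deduce Proposition~\ref{prop:bernoulli-variational} from Theorem~\ref{mainthm:B} applied with the Bernoulli kernel $Q_x(t,\cdot)=p$ and $\nu=p\times\mu$. We then translate each object on $Z=T\times X$ and $\hat Z_k=T\times\hat{\mathcal E}_k$ into an object on $X$ and $\hat{\mathcal E}_k$.

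\textbf{Step 1: pointwise exponents.} Theorem~\ref{mainthm:B} gives $\lambda_k(\omega,x)=\lambda_k(\omega_0,x)$ for $m$-a.e.\ $(\omega,x)$ and $\lambda_k(m)=\lambda_k(\nu)$. It remains to show that $\Lambda_k(t,x)$ does not depend on $t$ and agrees with the annealed limit in~\eqref{eq:annealead_exponents}. The function $\Lambda_k$ is $P$-invariant, and $P\varphi(t,x)=\int\varphi(s,f_t(x))\,dp(s)$ depends on $t$ only through $f_t(x)$. Writing $\Lambda_k=P\Lambda_k$ $\nu$-a.e., we get $\Lambda_k(t,x)=\bar\Lambda_k(f_t(x))$, where $\bar\Lambda_k(y):=\int\Lambda_k(s,y)\,dp(s)$. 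Invariance then gives $\bar\Lambda_k(x)=\int\bar\Lambda_k(f_t(x))\,dp(t)$. Combining this with $\Lambda_k=\bar\Lambda_k\circ f_t$, we conclude that $\Lambda_k(t,x)=\bar\Lambda_k(x)$ for $\nu$-a.e.\ $(t,x)$. One route is to apply Kingman along the ergodic decomposition, where $\Lambda_k$ is constant on each ergodic component $p\times\mu_\alpha$. Alternatively, one can argue directly from Lemma~\ref{lem:finite-horizon-expansion}: in the Bernoulli case the law of $(\omega_1,\dots,\omega_{n-1})$ is $p^{n-1}$, independent of $t$. The $t$-dependence then sits only in the first factor $F_{t,x}$, whose contribution is $O(1)$ after dividing by $n$, by the integrability assumption and a Borel--Cantelli-type argument. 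Comparing $\log\|\wedge^kF^n\|$ with the process of Theorem~\ref{mainthm:B} through the inequalities of its proof, we identify $\bar\Lambda_k(x)$ with the annealed limit, and hence $\lambda_k(\omega,x)=\lambda_k(x)$ and $\lambda_k(m)=\lambda_k(\mu)$.

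\textbf{Step 2: variational formula.} This is the main point. We establish a correspondence between $\mathcal I_\nu(\hat P_k)$ and $\bar F_k$-stationary lifts of $\mu$.
\begin{itemize}
\item \emph{From stationary lifts to invariant lifts.} Given an $\bar F_k$-stationary $\hat\mu$ with $(\pi_k)_*\hat\mu=\mu$, set $\hat\nu:=p\times\hat\mu$. The identity $\hat P_k\psi(t,x,[v])=\int\psi(s,\bar F_{k,t}(x,[v]))\,dp(s)$ shows that $\hat\nu$ is $\hat P_k^*$-invariant: integrating against $p\times\hat\mu$ yields $\int\!\!\int\psi(s,\xi)\,dp(s)\,d\bigl(\int(\bar F_{k,t})_*\hat\mu\,dp(t)\bigr)(\xi)$, which equals $\int\psi\,d\hat\nu$ by stationarity. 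Moreover $\int\hat\phi_k\,d\hat\nu=\int\bar\phi_k\,d\hat\mu$ by Fubini, which is justified because positive parts are integrable.
\item \emph{From invariant lifts to stationary lifts.} Given $\hat\nu\in\mathcal I_\nu(\hat P_k)$, invariance gives $\hat\nu=\hat P_k^*\hat\nu=p\times\hat\mu$, where $\hat\mu:=\int(\bar F_{k,t})_*\hat\nu_t\,d\hat\nu$. Here the notation is informal: $\hat\mu$ is the image of $\hat\nu$ under the map $(t,x,[v])\mapsto\bar F_{k,t}(x,[v])$. Since $\hat\nu=p\times\hat\mu$, the pushforward of $p\times\hat\mu$ under $(t,\xi)\mapsto\bar F_{k,t}(\xi)$ equals $\hat\mu$, so $\hat\mu$ is stationary, and it projects to $\mu$.
\end{itemize}
Thus $\hat\nu\mapsto\hat\mu$ is an affine bijection that preserves the objective functional. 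Theorem~\ref{mainthm:B} then yields~\eqref{eq:bernoilli-VP}. Because the bijection is affine, extreme points correspond, so the attained ergodic maximizer transfers. The degeneracy locus of $\hat F_k$ is $T\times\{$points with $\wedge^kF_{t,x}v=0\}$ together with the cemetery. Any maximizing $\hat\nu=p\times\hat\mu$ gives zero mass to this locus, so $\hat\mu$ gives zero mass to the cemetery and, for $p$-a.e.\ $t$, to the $t$-kernel set; this is how we read the degeneracy locus in the statement.

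\textbf{Main obstacle.} We expect the main obstacle to be Step 1, specifically the $t$-independence of $\Lambda_k$ and its identification with~\eqref{eq:annealead_exponents}, in particular handling $-\infty$ values. We plan to rely on the ergodic-decomposition argument of \S\ref{ss:ergodic-decomposition} combined with the product structure.
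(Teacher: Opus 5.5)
Your proposal is correct and follows the paper's proof. It uses Theorem~\ref{mainthm:B} with $Q_x(t,\cdot)=p$, the correspondence $\hat\nu=p\times\hat\mu$ between $\mathcal I_\nu(\hat P_k)$ and $\bar F_k$-stationary lifts of $\mu$ (which you derive directly from $\hat\nu=\hat P_k^*\hat\nu$, whereas the paper cites \cite{BMNNT}), Fubini to pass from $\hat\phi_k$ to $\bar\phi_k$, and the ergodic decomposition $\nu=\int p\times\mu_\alpha\,d\eta(\alpha)$ for the pointwise identity. Commit to that ergodic-decomposition route in Step~1: the sketched direct alternative fails for singular morphisms, because $\log\|\wedge^k(L\circ F_{t,x})\|$ admits no lower bound of the form $\log\|\wedge^kL\|-C(t,x)$ (it may even equal $-\infty$), so the first factor's contribution is not $O(1)$.
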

\begin{proof}
    {We apply} Theorem~\ref{mainthm:B} with $Q_x(t,ds)=p(ds)$ and $\nu=p\times\mu$.
This yields
\[
\Lambda_k(\nu)
=
\sup\Bigl\{\int \hat\phi_k\,d\hat\nu:\
\hat\nu\in\mathcal I(\hat P_k)\ \text{and}\ (\pi_k)_*\hat\nu=\nu\Bigr\}.
\]

Now invoke the Bernoulli reduction from \cite[Thm.~3.4 and Lem.~3.7]{BMNNT}:
stationary measures for the transition probability associated with the Markovian place-dependent iteration correspond to $\mathbb P$-Markovian invariant measures for the skew-product. Moreover, in the Bernoulli case, any such invariant $\mathbb P$-Markovian measure must be a product.
In particular, any admissible $\hat\nu$ above must be of the form
$\hat\nu = p\times \hat\mu$
for some probability measure $\hat\mu$ on $\hat{\cE}_k$, and the constraint $({\pi}_k)_*\hat\nu=\nu$
is equivalent to $(\pi_k)_*\hat\mu=\mu$.

A direct computation shows that $\hat\nu=p\times\hat\mu$ is $\hat P_k^*$-invariant if and only if
$\hat\mu$ is $\bar{P}_k^*$-invariant: for every bounded measurable $\psi$ on $\hat{\cE}_k$,
\[
\int \psi\,d\hat\mu
=
\int \psi\,d(p\times\hat\mu)
=
\int\hat P_k\psi\,d(p\times\hat\mu)
=
\int \bar{P}_k\psi\,d\hat\mu.
\]
Finally,
\[
\int_{\hat Z_k}\hat\phi_k\,d(p\times\hat\mu)
=
\int_{\hat{\cE}_k}\Bigl(\int_T \hat\phi_k(t,x,[v])\,dp(t)\Bigr)\,d\hat\mu(x,[v])
=
\int_{\hat{\cE}_k}\bar\phi_k\,d\hat\mu,
\]
which gives the desired variational formula {$\Sigma_k(m)=\Lambda_k(\mu)$} where $\Sigma_k(m):=\sum_{i=1}^k \lambda_i(m)$. Attainment, as well as the statement about the degeneracy locus when
$\lambda_k(m)>-\infty$, follows from Theorem~\ref{mainthm:B}.

It remains to prove the coincidence of Lyapunov exponents. First, \(\lambda_k(m)=\lambda_k(\mu)\) follows by taking consecutive differences in the identity \(\Sigma_k(m)=\Lambda_k(\mu)\). For the pointwise statement,
let $\mu=\int \mu_\alpha\,d\eta(\alpha)$ be the ergodic decomposition of $\mu$ for $\bar P$
and set $\nu_\alpha:=p\times\mu_\alpha$.
By the Bernoulli reduction (cf.\ \cite[Theorem~3.4 and Lemma~3.7]{BMNNT}), the measures $\nu_\alpha$ are precisely the $P$-ergodic
components of $\nu$, hence $\nu=\int \nu_\alpha\,d\eta(\alpha)$ is the ergodic decomposition for $P$.

Fix $\alpha$. Since $\nu_\alpha$ is $P$-ergodic, Kingman's theorem for $P$ yields that the pointwise sums
$\Lambda_k(t,x)$ are $\nu_\alpha$-a.e.\ constant and equal to $\Lambda_k(\nu_\alpha)$; therefore
$\lambda_k(t,x)$ is $\nu_\alpha$-a.e.\ constant and equal to $\lambda_k(\nu_\alpha)$.
Likewise, since $\mu_\alpha$ is $\bar P$-ergodic, the pointwise growth rates $\Lambda_k(x)$ are $\mu_\alpha$-a.e.\ constant and equal to
$\Lambda_k(\mu_\alpha)$, hence $\lambda_k(x)$ is $\mu_\alpha$-a.e.\ constant and equal to $\lambda_k(\mu_\alpha)$. Finally, applying the coincidence of {growth rates} to the measure $\mu_\alpha$ gives equality of the exponents on each ergodic component, i.e.,
$\lambda_k(\nu_\alpha)=\lambda_k(\mu_\alpha)$ for all $k$.
Thus, $\lambda_k(t,x)=\lambda_k(\nu_\alpha)=\lambda_k(\mu_\alpha)=\lambda_k(x)$, for $\nu_\alpha$-a.e.\ $(t,x)$. Integrating over $\alpha$ yields that $\lambda_k(t,x)=\lambda_k(x)$ for $\nu$-a.e.~$(t,x)$.
Finally, from Theorem~\ref{mainthm:B} {we have}  $\lambda_k(\omega,x)=\lambda_k(\omega_0,x)$ for $m$-{a.e.~}$(\omega,x)$. Combining both, we get $\lambda_k(\omega,x)=\lambda_k(x)$ for $m$-a.e.~$(\omega,x)$. This completes the proof of Proposition~\ref{prop:bernoulli-variational}.
\end{proof}

Although the $\omega$-independence of the pointwise Lyapunov exponents is classical, see~\cite[Thm.~3.2]{liu2006smooth},  Proposition~\ref{prop:bernoulli-variational} yields a stronger conclusion: it identifies the exponent with the annealed asymptotic formula~\eqref{eq:annealead_exponents}. For the top Lyapunov exponent, an analogous annealed characterization was obtained in~\cite[Thm.~3.2.1]{BM}.

Applying Theorem~\ref{thm:VP} to the factor system
\((\pi_k,\bar P_k,\bar P,\bar\phi_k,\mu)\), where \(\bar P\) and
\(\bar P_k\) are the annealed Koopman operators associated with \(f\)
and \(\bar F_k\), respectively, shows that the variational quantity
in~\eqref{eq:bernoilli-VP} also agrees with the asymptotic growth
obtained by first averaging over the noise and then maximizing over
directions. Since Proposition~\ref{prop:bernoulli-variational} identifies the same variational quantity with the sum of the first $k$ Lyapunov exponents, the following corollary follows immediately.

\begin{maincor}\label{cor:bernoulli-sup-int}
Under the assumptions of Proposition~\ref{prop:bernoulli-variational}, for every \(k=1,\dots, d\),
\begin{align*}
\sum_{i=1}^k\lambda_i(m)
&=
\lim_{n\to\infty}\frac1n
\int
\sup_{[v]\in P(\wedge^k \mathcal E_x)}
\biggl(
\int
\log\frac{\|\wedge^kF^n_{(\omega,x)}v\|}{\|v\|}
\,d\mathbb P
\biggr)\,d\mu
\end{align*}
where the limit may be replaced by the infimum.
Moreover, if $\mu$ is ergodic, then
\begin{align*}
\sum_{i=1}^k\lambda_i(m)
=
\lim_{n\to\infty}  \
\sup_{[v]\in P(\wedge^k \mathcal E_x)} \, \frac1n
 \int
\log\frac{\|\wedge^kF^n_{(\omega,x)}v\|}{\|v\|}
\,d\mathbb P \quad \text{for $\mu$-a.e.~$x\in X$}.
\end{align*}
\end{maincor}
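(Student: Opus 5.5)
The plan is to apply Theorem~\ref{thm:VP} and Kingman's theorem for Markov operators~\cite[Thm.~D]{BM} directly to the annealed factor system $(\pi_k,\bar P_k,\bar P,\bar\phi_k,\mu)$, and then to identify the resulting growth rate with $\sum_{i=1}^k\lambda_i(m)$ using Proposition~\ref{prop:bernoulli-variational}.

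The operators are
\[
\bar P\varphi(x)=\int\varphi(f_t(x))\,dp(t),
\qquad
\bar P_k\psi(x,[v])=\int\psi(\bar F_{k,t}(x,[v]))\,dp(t).
\]
Lemma~\ref{lem:markov-factor-preserves-Fpi}, applied with the constant kernel $\kappa(x,\cdot)=p$ and with $\pi_k\circ\bar F_{k,t}=f_t\circ\pi_k$, shows that both are $\sigma$-Markov and that $\bar P_k\circ\pi_k^*=\pi_k^*\circ\bar P$. The $\bar P^*$-invariant measures are exactly the $f$-stationary ones, and the $\bar P_k^*$-invariant lifts of $\mu$ are exactly the $\bar F_k$-stationary lifts.

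\emph{Hypotheses of Theorem~\ref{thm:VP}.} These are checked fiberwise, as in Lemma~\ref{lem:regularity-for-VP}, with the degeneracy locus
\[
\bar K_k=\Big\{(x,[v]):\ p\big(\{t:\wedge^kF_{t,x}v=0\}\big)>0\Big\}\cup\Delta.
\]
\begin{enumerate}
\item[(ii)] Upper semicontinuity of $\bar\phi_k$ on each fiber follows from reverse Fatou. This uses that $\hat\phi_k(t,x,\cdot)$ is u.s.c.\ and bounded above by $k\log^+\|F_{t,x}\|$, which is $p$-integrable for $\mu$-a.e.\ $x$ by \eqref{eq:integrability-nu}; off a $\mu$-null set of fibers we redefine things trivially. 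If $p$ charges the kernel set, then $\bar\phi_k=-\infty$ there, so $\bar\phi_k=-\infty$ on $\bar K_k$.
\item[(iii)] We have $(\mathcal M\bar\phi_k)^+\le\int k\log^+\|F_{t,x}\|\,dp\in L^1(\mu)$.
\item[(i)] This is the main obstacle. At a point $(x,[v])\notin\bar K_k$, the set of $t$ with $v\in\ker\wedge^kF_{t,x}$ is $p$-null. For $p$-a.e.\ $t$ the projective map $[v]\mapsto[\wedge^kF_{t,x}v]$ is then continuous at $[v]$, and dominated convergence (with $\psi$ bounded) gives fiberwise continuity of $\bar P_k\psi$ at $[v]$.
\end{enumerate}
It remains to check that $\bar K_k$ is closed in each fiber and Borel. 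If this fails, I would instead argue directly via the correspondence $\hat\nu=p\times\hat\mu$ established in the proof of Proposition~\ref{prop:bernoulli-variational}. That is the step I would try first if (i) causes trouble.

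\emph{Identification of the growth rate.} Theorem~\ref{thm:VP} then says the growth rate $\bar\Lambda_k(\mu)$ of $\bar\phi_{k,n}:=\mathcal M\sum_{j<n}\bar P_k^j\bar\phi_k$ equals the supremum in \eqref{eq:bernoilli-VP}. By Proposition~\ref{prop:bernoulli-variational}, that supremum is $\sum_{i\le k}\lambda_i(m)$. Expanding $\sum_{j<n}\bar P_k^j\bar\phi_k$ along the Bernoulli chain and telescoping exactly as in Lemma~\ref{lem:finite-horizon-expansion} gives
\[
\sum_{j<n}\bar P_k^j\bar\phi_k(x,[v])=\int\log\frac{\|\wedge^kF^n_{(\omega,x)}v\|}{\|v\|}\,d\mathbb P .
\]
The $(n+1)$-st coordinate integrates out, since $\bar\phi_k$ already averages over $t$. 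Hence $\int\bar\phi_{k,n}\,d\mu$ is the integrand in the first formula. By Fekete's lemma, which applies because Theorem~\ref{mainthm:A} gives $\bar P$-subadditivity, the limit equals the infimum.

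\emph{Pointwise formula.} If $\mu$ is ergodic for $\bar P$, then~\cite[Thm.~D]{BM} applied to $(\bar\phi_{k,n})$ gives $\frac1n\bar\phi_{k,n}(x)\to\bar\Lambda_k(\mu)$ for $\mu$-a.e.\ $x$. This is the second formula.
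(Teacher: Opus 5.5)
Your proposal is correct and follows the paper's route: apply Theorem~\ref{thm:VP} to the annealed factor system \((\pi_k,\bar P_k,\bar P,\bar\phi_k,\mu)\), identify the supremum with \(\sum_{i\le k}\lambda_i(m)\) via Proposition~\ref{prop:bernoulli-variational}, telescope as in Lemma~\ref{lem:finite-horizon-expansion}, and invoke \cite[Thm.~D]{BM} for the pointwise statement; you also supply the hypothesis checks that the paper leaves implicit. The closedness worry you flag is unnecessary, because condition (i) only asks that \(\bar P_k\psi\) be fiberwise continuous at each point off \(\bar K_k\), which your dominated-convergence argument already gives, and \(\bar K_k\) is Borel since \((x,[v])\mapsto p\{t:\wedge^kF_{t,x}v=0\}\) is measurable; closedness in the fibers is never used, and it can genuinely fail when \(p\) has infinitely many atoms whose kernel directions accumulate.
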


Finally, we also have the following:

\begin{rem} The variational formula~\eqref{eq:bernoilli-VP} and the asymptotic representations in Corollary~\ref{cor:bernoulli-sup-int} still hold by modifying the supremum over the directions $[v] \in P(\wedge^k \mathcal E_x)$ by a supremum over $k$-planes $W\in \mathrm{G}_k(\mathcal E_x)$,  potential $\bar\phi_k(x,[v])$ by $\bar\phi_{G_k}(x,W)=\int \log\mathrm{Jac}_k(F_{t,x}|_W) \, dp$, and $\log(\|\wedge^kF^n_{(\omega,x)}v\|/\|v\|)$ by $\log\mathrm{Jac}_k(F^n_{(\omega,x)}|_W)$.
\end{rem}


\section{Edge Markovian random bundle morphisms}\label{sec:Prop-VI}

In the place-independent Markovian setting, the dependence on the noise can be modeled in two natural ways.  In the \emph{vertex model} of Theorem~\ref{mainthm:B}, the dynamics are assigned to the states, so that the map \(f_t\) depends only on the current vertex \(t\) and the evolution is \(x_{n+1}=f_{t_n}(x_n)\). In the \emph{edge model}, by contrast, the dynamics are assigned to the transitions, so that the map \(f_{t,s}\) depends on the jump \(t\to s\) and the evolution is \(x_{n+1}=f_{t_n,t_{n+1}}(x_n)\). By enlarging the noise space to admissible transitions, the edge model reduces to the vertex framework. Consequently, Theorem~\ref{mainthm:B} yields a variational formula, but with an annealed potential.

To formalize this, let \(f:T\times T\times X\to X\) be a measurable map, which we call an \emph{edge random map}, and write \(f_{t,s}:=f(t,s,\cdot)\). Let
$
F:T\times T\times \mathcal E\to \mathcal E$
be an \emph{edge random linear bundle morphism} covering \(f\), meaning that
\[
F_{t,s,x}:\mathcal E_x\to \mathcal E_{f_{t,s}(x)}
\quad\text{is linear for every }t,s\in T,
\qquad
\pi\circ F_{t,s}=f_{t,s}\circ\pi.
\]

To study its Lyapunov spectrum, we consider the driving measure \(m\) on \(\Omega\times X\) constructed exactly as in the vertex case. Namely, \(m=\tilde\pi_*\tilde{\mathbb P}\), where \(\tilde{\mathbb P}\) is the Markov measure on \(Z^{\mathbb N}\), \(Z=T\times X\), determined by the transition kernel
\[
q((t,x),C)=\int 1_C\bigl(s,f_{t,s}(x)\bigr)\,Q(t,ds),
\qquad
(t,x)\in Z,\ C\in\mathscr C=\mathscr A\otimes\mathscr B,
\]
and the stationary initial law \(\nu\). We also consider the  edge-driven skew-product
\[
\mathcal F:\Omega\times X\to\Omega\times X,
\qquad
\mathcal F(\omega,x)=(\sigma\omega,f_{\omega_0,\omega_1}(x)),
\]
and the iterates of the edge random bundle morphism,
\[
F^n_{(\omega,x)}
=
F_{\mathcal F^{n-1}(\omega,x)}\circ\cdots\circ F_{\mathcal F(\omega,x)}\circ F_{(\omega,x)},
\qquad
F_{(\omega,x)}:=F_{\omega_0,\omega_1,x}.
\]
Under the standard integrability condition~\eqref{eq:integrability-m}, one introduces the Lyapunov exponents $\lambda_1(m)\ge\cdots\ge\lambda_d(m)$ and  $\lambda_1(\omega,x)\ge\cdots\ge\lambda_d(\omega,x)$ as in~\eqref{eq:lypunov-exponents} and Appendix~\ref{ss:lyapunov-spectrum}. Also following~\S\ref{ss:spectrum-lyapunov-random}, the Lyapunov exponents $\lambda_1(\nu)\ge\cdots\ge\lambda_d(\nu)$ and $\lambda_1(t,x)\ge\cdots\ge\lambda_d(t,x)$ are also  defined from the initial law \(\nu\) by applying Kingman’s theorem for Markov operators~\cite[Thm.~D]{BM} and Theorem~\ref{mainthm:A} to \((\pi_k,\hat P_k,P,\hat\phi_k,\nu)\) where, abusing of notation,
\[
\nu=(\pi_0)_*m=P^*\nu,
\qquad
P\varphi(t,x)=\int \varphi\bigl(s,f_{t,s}(x)\bigr)\,Q(t,ds),
\]
the lifted operator is
\begin{equation}\label{eq:averaged-operator}
\hat P_k\psi(t,x,[v])
=
\int \psi\bigl(s,f_{t,s}(x),[\wedge^kF_{t,s,x}v]\bigr)\,Q(t,ds),
\end{equation}
and the potential is
\begin{equation}\label{eq:averaged-potential}
\hat\phi_k(t,x,[v])
=
\int \log\frac{\|\wedge^kF_{t,s,x}v\|}{\|v\|}\,Q(t,ds),
\end{equation}
with the usual conventions in the singular case.

\begin{mainprop}\label{prop:edge-variational}
Let \(F\) be an edge Markovian random bundle morphism satisfying~\eqref{eq:integrability-m} with respect to the driving measure \(m\) constructed from a transition probability kernel \(Q(t,A)\) on \(T\times\mathscr A\) and a stationary initial law \(\nu\) on \(T\times X\). Then, for every $k=1,\dots,d$,
\[
\lambda_k(\omega,x)=\lambda_k(\omega_0,x)
\quad\text{for \(m\)-a.e.\ \((\omega,x)\),}
\qquad
\lambda_k(m)=\lambda_k(\nu),
\quad
k=1,\dots,d,
\]
and
\[
\sum_{i=1}^k \lambda_i(m)
=
\sup\left\{
\int \hat\phi_k\,d\hat\nu:
\hat\nu\in\mathcal I(\hat P_k)\ \text{and}\ (\pi_k)_*\hat\nu=\nu
\right\}
\]
where \(\hat P_k\) and \(\hat\phi_k\) are given by~\eqref{eq:averaged-operator} and~\eqref{eq:averaged-potential}, respectively.
Moreover, the supremum is attained on an ergodic \((\hat P_k)^*\)-invariant lift of \(\nu\) and any maximizing measure has zero mass on the degeneracy locus whenever \(\lambda_k(m)>-\infty\).
\end{mainprop}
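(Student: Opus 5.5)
The plan is to reduce the edge model to the vertex model of Theorem~\ref{mainthm:B} by coding edges as vertices, and then to transfer the variational formula back to $Z=T\times X$ using an annealing step.

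\textbf{Step 1: edge coding.} Set $T'=T\times T$ and $\omega'_n=(\omega_n,\omega_{n+1})$. Define the vertex maps $f'_{(t,s)}:=f_{t,s}$ and $F'_{(t,s)}:=F_{t,s}$. Define the place-independent kernel $Q'((t,s),d(s,u))=\delta_s\otimes Q(s,du)$ on $T'$. The state space becomes $Z'=T'\times X$, and the Markov operator $P'$ of~\eqref{eq:fully-place-P} acts on it. The initial law is $\nu'=$ the law of $((\omega_0,\omega_1),x)$ under $m$, that is, $d\nu'=Q(t,ds)\,d\nu(t,x)$. It is $P'^*$-invariant because $\nu$ is $P^*$-invariant. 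The coding map $\Omega\times X\to \Omega'\times X$ conjugates the edge skew-product $\mathcal F$ to the vertex skew-product $\mathcal F'$. It pushes $m$ to the driving measure $m'$ built from $\nu'$, and the two cocycles agree: $F'^n=F^n$. Hence $\lambda_k(m)=\lambda_k(m')$, and the pointwise exponents agree. The integrability condition~\eqref{eq:integrability-nu} for $\nu'$ is exactly~\eqref{eq:integrability-m}. Theorem~\ref{mainthm:B} then gives
\[
\lambda_k(\omega',x)=\lambda_k'(\omega'_0,x)\quad m'\text{-a.e.},\qquad \Sigma_k(m)=\sup_{\hat\nu'\in\mathcal I_{\nu'}(\hat P'_k)}\int\hat\phi'_k\,d\hat\nu',
\]
where $\hat\phi'_k(t,s,x,[v])=\log\|\wedge^kF_{t,s,x}v\|/\|v\|$.

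\textbf{Step 2: comparing lifts.} Let $J(t,s,x,[v])=(s,f_{t,s}(x),[\wedge^kF_{t,s,x}v])$, with the usual cemetery convention. The key computation is that, for any $\hat\nu'\in\mathcal I_{\nu'}(\hat P'_k)$, the state at the next step is distributed as $\hat\nu'$. Conditioning on that step shows that $\hat\nu'$ has the form $\hat\nu'=\int \delta_{t}\otimes Q(t,\cdot)\otimes\delta_{(x,[v])}\,d\hat\nu(t,x,[v])$ for some $\hat\nu$ on $\hat Z_k$. Here $\hat\nu$ is the pushforward of $\hat\nu'$ under forgetting $s$. The reason is that the lifted chain draws $u\sim Q(s,\cdot)$ independently given the present state. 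This $\hat\nu$ is invariant for the averaged operator~\eqref{eq:averaged-operator} and projects to $\nu$. Conversely, any $\hat\nu\in\mathcal I_\nu(\hat P_k)$ yields $\hat\nu'$ by this formula, and $\hat\nu'$ is $\hat P'_k$-invariant with $(\pi'_k)_*\hat\nu'=\nu'$. Integrating over $s$ first gives $\int\hat\phi'_k\,d\hat\nu'=\int\hat\phi_k\,d\hat\nu$, with $\hat\phi_k$ as in~\eqref{eq:averaged-potential}. So the two suprema coincide. The correspondence is affine and bijective, so it preserves extreme points: ergodic maximizers correspond to ergodic maximizers, and attainment transfers from Theorem~\ref{mainthm:B}. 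Vanishing mass on the degeneracy locus transfers as well. The edge-coded degeneracy locus has $\hat\nu'$-mass $\int Q(t,\{s:\wedge^kF_{t,s,x}v=0\})\,d\hat\nu$, plus the cemetery mass. This is zero exactly when $\hat\nu$ puts zero mass where $\hat\phi_k=-\infty$, which contains the averaged degeneracy locus.

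\textbf{Step 3: pointwise identification.} Proposition~\ref{mainprop:kingman1}, applied to $(\pi_k,\hat P_k,P,\hat\phi_k,\nu)$, defines $\Lambda_k(t,x)$ through $\hat\phi_{k,n}(t,x,[v])=\mathbb E[\log\|\wedge^kF^n v\|/\|v\|\mid (t,x,[v])]$. This holds by the same telescoping as in Lemma~\ref{lem:finite-horizon-expansion}, now averaging over $\omega_1$ as well. In the coded system the corresponding quantity is $\Lambda'_k(t,s,x)$, and one has $\Lambda_k(t,x)\ge\int\Lambda'_k(t,s,x)\,Q(t,ds)$ after exchanging sup and integral. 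To get equality in the pointwise statement, I would follow the ergodic decomposition argument of \S\ref{ss:ergodic-decomposition} directly. Decompose $\nu$ into $P$-ergodic components $\nu_\alpha$. Each $\nu_\alpha$ induces an $\mathcal F$-ergodic $m_\alpha$, and the integrated equality $\Sigma_k(m_\alpha)=\Lambda_k(\nu_\alpha)$ comes from Steps 1--2 together with Theorem~\ref{thm:VP}. Then $\Sigma_k(\omega,x)=\Lambda_k(\omega_0,x)$ holds $m$-a.e. Taking differences gives the claims for $\lambda_k$.

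\textbf{Main obstacle.} I expect the hardest step to be the structural claim in Step 2, namely that every $\hat P'_k$-invariant lift disintegrates as $\delta_t\otimes Q(t,\cdot)$ in the second coordinate. This is what makes the annealed potential appear. It needs a careful invariance computation against test functions $\psi(t,s,x,[v])$, which shows that the conditional law of $s$ given $(t,x,[v])$ is $Q(t,\cdot)$. The other thing to check is that ergodicity of $\nu_\alpha$ passes to $m_\alpha$ in the edge model.
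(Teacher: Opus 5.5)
Your proposal is correct and follows the same overall plan as the paper: edge coding, Theorem~\ref{mainthm:B} applied to the coded vertex system with $d\nu'=Q(t,ds)\,d\nu$, a comparison of invariant lifts, and the ergodic-decomposition argument of \S\ref{ss:ergodic-decomposition} for the pointwise identity. You differ in the comparison step. The paper proves the two inequalities with two separate constructions. Lemma~\ref{lem:a} uses $\hat\nu\mapsto Q(t,ds)\,d\hat\nu$, exactly as you do. Lemma~\ref{lem:b} pushes a coded lift forward by the one-step map $G((t,s),x,[v])=(s,f_{t,s}(x),[\wedge^kF_{t,s,x}v])$ and gets equal integrals from $\hat\phi_k\circ G=\hat P^e_k\hat\phi^e_k$ plus invariance, without ever describing the coded lifts. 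You instead use $\hat\nu'=(\hat P'_k)^*\hat\nu'$ to show that $\hat\nu'=\int\delta_t\otimes Q(t,\cdot)\otimes\delta_{(x,[v])}\,d(G_*\hat\nu')$. So on invariant lifts, $G_*$ is the ``forget $s$'' marginal and inverts the map of Lemma~\ref{lem:a}. This gives an integral-preserving affine bijection $\mathcal I_\nu(\hat P_k)\leftrightarrow\mathcal I_{\nu'}(\hat P'_k)$. Maximizers and extreme points then correspond with no further work. The paper, by contrast, justifies ergodicity of $G_*\hat\nu^e_*$ only by calling $G$ a factor map, and your bijection is what makes the transfer of relative extremality immediate. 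In exchange, the paper's route never needs to characterize the coded lifts. The ergodicity of $m_\alpha$ that you flag is also only asserted in the paper. It holds because $\tilde\pi$ intertwines the shift on $Z^{\mathbb N}$ with $\mathcal F$ on a full-measure set, and the Markov path measure of a $P$-ergodic stationary law is shift-ergodic.

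There are two small slips, neither load-bearing. First, in Step 3, exchanging supremum and integral gives
$\phi_{k,n}(t,x)=\sup_{[v]}\int\hat\phi'_{k,n}(t,s,x,[v])\,Q(t,ds)\le\int\phi'_{k,n}(t,s,x)\,Q(t,ds)$. So the inequality should read $\Lambda_k(t,x)\le\int\Lambda'_k(t,s,x)\,Q(t,ds)$, not $\ge$.

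Second, in Step 2, zero $\hat\nu'$-mass on the coded degeneracy locus does not imply $\hat\nu(\{\hat\phi_k=-\infty\})=0$: the averaged integral can be $-\infty$ even when degeneracy is $Q(t,\cdot)$-null. So your ``exactly when'' fails in the direction you actually use. The conclusion survives for two reasons. Zero coded-locus mass is equivalent to zero $\hat\nu$-mass on $\{Q(t,\{s:\wedge^kF_{t,s,x}v=0\})>0\}\cup(T\times\Delta)$. Also, any maximizer satisfies $\int\hat\phi_k\,d\hat\nu=\Sigma_k(m)>-\infty$, which directly gives $\hat\nu(\{\hat\phi_k=-\infty\})=0$.
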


\begin{proof}
Let $T^e:=T\times T$ and $\Omega^e:=(T^e)^{\mathbb N}$. Define the canonical coding $\Theta:\Omega\to\Omega^e$ by
\[
\Theta((\omega_i)_{i\geq 0}):=\bigl((\omega_0,\omega_1),(\omega_1,\omega_2),\dots\bigr).
\]
Let $m^e:=(\Theta\times \mathrm{id}_X)_*m$. By definition of the edge cocycle, we have the exact identification $F^n_{(\omega,x)} = (F^e)^n_{(\Theta(\omega),x)}$, where the auxiliary \emph{vertex} system on the noise space $T^e$ is given by
$
f^e_{(t,s)}(x):=f_{t,s}(x)$ and $F^e_{(t,s),x}:=F_{t,s,x}$,
and is driven by the edge-kernel 
$$
Q^e\bigl((t,s),D\bigr):=\int 1_D(s,r) Q(s,dr). \quad (t,s)\in T^e, \ D\in \mathscr{A}\otimes \mathscr{A}.
$$
Consequently, $\lambda_k(m)=\lambda_k(m^e)$.

Let $\nu:=(\pi_0)_*m$ be the stationary law on $T\times X$ and 
consider the probability measure $\nu^e:=(\pi_0^e)_*m^e$ on $T^e\times X$ where  $\pi_0^e(\omega^e,x):=(\omega^e_0,x)$. Since the second coordinate of an edge is sampled according to $Q(t,\cdot)$ from the first coordinate, one has
\[
d\nu^e((t,s),x)=Q(t,ds)\,d\nu(t,x).
\]
Then, since 
$$
 P^e\varphi ((t,s),x)=\int \varphi(z,f^e_{(t,s)}(x))  \, Q^e((t,s),dz)=\int \varphi((s,r),f_{t,s}(x)) \, Q(s,dr) 
$$
we have that for every $\varphi \in B(T^e \times X)$, 
\begin{align*}
    \int P^e\varphi \, d\nu^e 
    &= \int  \varphi\bigl((s,r), f_{t,s}(x)\bigr) \, Q(s,dr)  \, Q(t,ds) \, d\nu(t,x).
\end{align*}
Defining the auxiliary function $\Phi \in B(T \times X)$ by $\Phi(s,y) \coloneqq \int \varphi((s,r), y) \, Q(s,dr)$, and substituting this into the integral above and using that $\nu$ is $P^*$-invariant (i.e., $\int P\Phi \, d\nu = \int \Phi \, d\nu$), we obtain
\begin{align*}
    \int P^e\varphi \, d\nu^e 
    &= \int \Phi(s, f_{t,s}(x)) \, Q(t,ds) \, d\nu(t,x) = \int P\Phi(t,x) \, d\nu (t,x) \\
    &= \int \Phi \, d\nu = \int \varphi\bigl((t,v), x\bigr) \, Q(t,dr)  \, d\nu(t,x)=\int \varphi \, d\nu^e.
\end{align*}
Thus, $(P^e)^*\nu^e = \nu^e$. 

Applying Theorem~\ref{mainthm:B} to the vertex system $(f^e, F^e)$ driven by $Q^e((t,s),D)$ with stationary measure $\nu^e$  yields
\begin{equation}\label{eq:edge-vertex-VP}
\Sigma_k(m):=\sum_{i=1}^k\lambda_i(m) =
\sum_{i=1}^k\lambda_i(m^e)
=\sup\left\{\int \hat\phi^e_k\,d\hat\nu^e:\ \hat\nu^e\in\mathcal I_{\nu^e}(\hat P^e_k)\right\}
\end{equation}
where 
$$\hat\phi^e_k((t,s),x,[v])=\log\frac{\|\wedge^k F_{t,s,x}v\|}{\|v\|}$$ 
and 
\[
\hat P^e_k\psi((t,s),x,[v])
=\int \psi\bigl((s,r),\,f_{t,s}(x),\,[\wedge^kF_{t,s,x}v]\bigr)\,Q(s,dr).
\]
Moreover,  the supremum in~\eqref{eq:edge-vertex-VP} is attained on an ergodic $(\hat P^e_k)^*$-invariant lift measure of $\nu^e$ (with null mass on the degeneracy locus provided $\lambda_k(m)=\lambda_k(m^e)>-\infty$).


{We now compare the supremum in \eqref{eq:edge-vertex-VP} with the variational formula
\[
\Lambda_k(\nu):=\sup \left\{ \int \hat\phi_k \, d\hat{\nu}: \hat\nu \in \mathcal{I}_{\nu}(\hat P_k)  \right\},
\]
where the annealed operator $\hat P_k$ and the annealed potential $\hat\phi_k$ are given by \eqref{eq:averaged-operator} and \eqref{eq:averaged-potential}, respectively. We first show that every $\hat \nu \in \mathcal{I}_{\nu}(\hat P_k)$ induces a measure $\hat \nu^e \in \mathcal{I}_{\nu^e}(\hat P^e_k)$ with the same integral. This gives one inequality between the two suprema:
}


\begin{lem} \label{lem:a}  $\Lambda_k(\nu) \leq \Sigma_k(m)$ \end{lem} \begin{proof} Fix $\hat\nu\in\mathcal I_{\nu}(\hat P_k)$. 
 We define the measure $\hat\nu^e$ on $\hat Z^e_k = T^e \times \hat{\mathcal{E}}_k$ by 
\begin{equation} \label{eq:disint-Q}
    d\hat\nu^e((t,s),x,[v]) := Q(t,ds)\,d\hat\nu(t,x,[v]).
\end{equation}
From this {disintegration} formula and {Fubini's theorem, we obtain}
\begin{align} \label{eq:Fubinni}
    \int \hat\phi^e_k \, d\hat\nu^e 
    &= \int\log\frac{\|\wedge^k F_{t,s,x}v\|}{\|v\|} \, Q(t,ds)  \,d\hat\nu(t,x,[v]) 
    = \int \hat\phi_k  \, d\hat\nu.
\end{align}
{It remains to prove that \(\hat \nu^e \in \mathcal{I}_{\nu^e}(\hat P^e_k)\). We first verify that \(\hat\nu^e\) projects to \(\nu^e\).} Let $\varphi \in B(T^e \times X)$ be a test function {on the base space.} Since $\hat\nu$ projects to $\nu$, we have
\begin{align*}
    \int \varphi \, d\hat\nu^e 
    &= \int \varphi((t,s),x) \, Q(t,ds) \,  d\hat\nu(t,x,[v]) \\ &= \int  \varphi((t,s),x) \, Q(t,ds) \, d\nu(t,x) = \int \varphi \, d\nu^e.
\end{align*}
Thus, $\hat\nu^e$ is a lift of $\nu^e$. {To prove invariance, let \(\psi\in B(\hat Z_k^e)\) and define
\[
\Psi(t,x,[v]) := \int \psi((t,s),x,[v])\,Q(t,ds).
\]
Then, using \eqref{eq:disint-Q},
\begin{align*}
\int \hat P_k^e\psi\,d\hat\nu^e
&=
\int \psi\bigl((s,r),f_{t,s}(x),[\wedge^kF_{t,s,x}v]\bigr)\,Q(s,dr)\,Q(t,ds)\,d\hat\nu(t,x,[v]) \\
&=
\int \hat P_k\Psi(t,x,[v])\,d\hat\nu(t,x,[v]) =
\int \Psi\,d\hat\nu 
=
\int \psi\,d\hat\nu^e,
\end{align*}
since \(\hat\nu\in\mathcal I_\nu(\hat P_k)\). Hence \(\hat\nu^e\in \mathcal I_{\nu^e}(\hat P_k^e)\).}
\end{proof}



\begin{lem} \label{lem:b} $\Lambda_k(\nu) \geq \Sigma_k(m)$ \end{lem} 

\begin{proof} Fix \(\hat\nu^e\in\mathcal I_{\nu^e}(\hat P_k^e)\). Define
\[
G:\hat Z_k^e\to \hat Z_k,
\qquad
G((t,s),x,[v]) := \bigl(s,f_{t,s}(x),[\wedge^kF_{t,s,x}v]\bigr),
\]
and let $\hat\nu := G_*\hat\nu^e$. We first show that \((\pi_k)_*\hat\nu=\nu\). Let \(\varphi\in B(T\times X)\). Then
\begin{align*}
\int \varphi\,d(\pi_k)_*\hat\nu
&=
\int \varphi\circ \pi_k\circ G \, d\hat\nu^e =
\int \varphi\bigl(s,f_{t,s}(x)\bigr)\,d\hat\nu^e((t,s),x,[v])  \\
&=
\int \varphi\bigl(s,f_{t,s}(x)\bigr)\,d\nu^e((t,s),x) 
=
\int \varphi\bigl(s,f_{t,s}(x)\bigr)\,Q(t,ds)\,d\nu(t,x) 
\\ & =
\int P\varphi\,d\nu 
=
\int \varphi\,d\nu,
\end{align*}
because \((\pi_k^e)_*\hat\nu^e=\nu^e\) and \(\nu\in\mathcal I(P)\). Thus \((\pi_k)_*\hat\nu=\nu\).

Next we prove that \(\hat\nu\) is \((\hat P_k)^*\)-invariant. Let \(\psi\in B(\hat Z_k)\). A direct computation gives
\[
(\hat P_k\psi)\circ G = \hat P_k^e(\psi\circ G).
\]
Indeed, for \(((t,s),x,[v])\in\hat Z_k^e\),
\begin{align*}
((\hat P_k\psi)\circ G)((t,s),x,[v])
&=
(\hat P_k\psi)\bigl(s,f_{t,s}(x),[\wedge^kF_{t,s,x}v]\bigr) \\
&=
\int \psi\bigl(r,f_{s,r}(f_{t,s}(x)),[\wedge^kF_{s,r,f_{t,s}(x)}\,\wedge^kF_{t,s,x}v]\bigr)\,Q(s,dr) \\
&=
\hat P_k^e(\psi\circ G)((t,s),x,[v]).
\end{align*}
Therefore,
\begin{align*}
\int \hat P_k\psi\,d\hat\nu
&=
\int (\hat P_k\psi)\circ G\,d\hat\nu^e =
\int \hat P_k^e(\psi\circ G)\,d\hat\nu^e 
=
\int \psi\circ G\,d\hat\nu^e 
=
\int \psi\,d\hat\nu,
\end{align*}
since \(\hat\nu^e\in\mathcal I(\hat P_k^e)\). Hence \(\hat\nu\in\mathcal I_\nu(\hat P_k)\).

We now compare the integrals. By definition of \(G\),
\[
(\hat\phi_k\circ G)((t,s),x,[v])
=
\int \log \frac{\|\wedge^kF_{s,r,f_{t,s}(x)}\,\wedge^kF_{t,s,x}v\|}{\|\wedge^kF_{t,s,x}v\|}\,Q(s,dr).
\]
On the other hand,
\[
(\hat P_k^e\hat\phi_k^e)((t,s),x,[v])
=
\int \log \frac{\|\wedge^kF_{s,r,f_{t,s}(x)}\,\wedge^kF_{t,s,x}v\|}{\|\wedge^kF_{t,s,x}v\|}\,Q(s,dr),
\]
so
\[
\hat\phi_k\circ G = \hat P_k^e\hat\phi_k^e.
\]
Using the invariance of \(\hat\nu^e\), we obtain
\begin{align*}
\int \hat\phi_k\,d\hat\nu
=
\int \hat\phi_k\circ G\,d\hat\nu^e 
=
\int \hat P_k^e\hat\phi_k^e\,d\hat\nu^e 
=
\int \hat\phi_k^e\,d\hat\nu^e.
\end{align*}
Thus every \(\hat\nu^e\in\mathcal I_{\nu^e}(\hat P_k^e)\) produces a measure
\(\hat\nu\in\mathcal I_\nu(\hat P_k)\) with the same integral. This concludes the proof of the inequality.
\end{proof}

\medskip

Combining Lemma~\ref{lem:a} and~\ref{lem:b}, we obtain
$\Sigma_k(m)=\Lambda_k(\nu)$,
which is exactly the variational formula in Proposition~\ref{prop:edge-variational}. Finally, let \(\hat\nu_*^e\in\mathcal I_{\nu^e}(\hat P_k^e)\) be an ergodic maximizing lift in \eqref{eq:edge-vertex-VP}, and set $\hat\nu_*:=G_*\hat\nu_*^e$. 
Then \(\hat\nu_*\in\mathcal I_\nu(\hat P_k)\), and
$
\int \hat\phi_k\,d\hat\nu_*
=
\int \hat\phi_k^e\,d\hat\nu_*^e
=
\Sigma_k(m)$. 
Moreover, \(\hat\nu_*\) is ergodic, since \(G\) is a factor map between the two Markov systems. Therefore the supremum defining \(\Lambda_k(\nu)\) is attained on an ergodic \((\hat P_k)^*\)-invariant lift of \(\nu\). If \(\lambda_k(m)>-\infty\), then
$\int \hat\phi_k\,d\hat\nu_* > -\infty$, 
so \(\hat\nu_*\) gives zero mass to the degeneracy locus \(\{\hat\phi_k=-\infty\}\).

It remains to prove the pointwise identity. The comparison above uses
only the \(P^*\)-invariance of the initial law, and therefore applies to
every ergodic component of \(\nu\). Write
\[
\nu=\int\nu_\alpha\,d\eta(\alpha)
\]
for the ergodic decomposition of \(\nu\), and let \(m_\alpha\) be the
driving measure associated with \(\nu_\alpha\). Hence, $\Sigma_k(m_\alpha)=\Lambda_k(\nu_\alpha)$ for every $k$.
By the same ergodic-decomposition argument used in
\S\ref{ss:ergodic-decomposition}, $\Sigma_k(\omega,x)
=
\Sigma_k(m_\alpha)
=
\Lambda_k(\nu_\alpha)
=
\Lambda_k(\omega_0,x)$  for \(m_\alpha\)-a.e.~\((\omega,x)\).
Integrating over \(\alpha\), 
\[
\Sigma_k(\omega,x)=\Lambda_k(\omega_0,x)
\qquad\text{for \(m\)-a.e.~}(\omega,x).
\]
Taking consecutive differences with the conventions in
\eqref{eq:def-Lyapunov-law} and
\eqref{eq:def-lyapunov-sigma} yields
\[
\lambda_k(\omega,x)=\lambda_k(\omega_0,x)
\qquad\text{for \(m\)-a.e.~}(\omega,x).
\]
This completes the proof of Proposition~\ref{prop:edge-variational}.
\end{proof}

\section{Random matrix products}
\label{s:linear}

When \(X=\{\ast\}\), \(\mu=\delta_\ast\), and \(\mathcal E\simeq\mathbb R^d\), the bundle morphism \(F\) reduces to a linear cocycle $A$ on the space \(\mathrm{Mat}(d,\mathbb R)\) of $d\times d$ matrices. We study matrix products driven by a kernel \(Q(t,ds)\) on \(T\times\mathscr A\). Throughout this subsection, we assume that~\eqref{eq:integrability-nu} holds with \(F=A\) and \(\nu=p\), where \(p\) is a \(Q\)-stationary probability measure, i.e., $p=\int Q(t,\cdot) \ dp$. In this case, \(\hat{\mathcal E}_k\) denotes $\PP(\wedge^k\mathbb R^d)\) with a cemetery point \(\Delta\) adjoined in the singular case.
 

\subsection{Bernoulli case} In this setting we have an iid\ linear cocycle \(A:T\to\mathrm{Mat}(d,\mathbb R)\). Proposition~\ref{prop:bernoulli-variational} extends the classical Furstenberg variational formula from cocycles in \(\mathrm{GL}(d,\mathbb R)\) to singular cocycles:
\begin{align*}
\sum_{i=1}^k \lambda_i(m)
= 
\sup_{\hat\mu} 
\int \log\frac{\|\wedge^kA(t)v\|}{\|v\|}\,dp\,d\hat\mu 
\end{align*}
where $\hat \mu$ ranges over the $\bar{A}_k$-stationary measures and where $\bar{A}_k:T\times \hat{\mathcal{E}}_k \to \hat{\mathcal{E}}_k$ is the projective random map induced by $\wedge^k A$ and sending the degenerate locus to the fixed point $\Delta$. 
Recent work of Duarte and Graxinha~\cite{DG25} also treats non-invertible iid cocycles in \(\mathrm{Mat}(d,\mathbb R)\) under additional structural and integrability hypotheses. More precisely, in~\cite[Lem.~5.2]{DG25} they conclude the integral formula for the top Lyapunov exponent under the finite exponential moment assumption and for quasi-irreducible cocycles. By contrast, Proposition~\ref{prop:bernoulli-variational} yields a variational representation for possibly singular iid cocycles under the standard weak integrability assumption, without quasi-irreducibility or exponential-moment conditions.

On the other hand, Corollary~\ref{cor:bernoulli-sup-int} yields
\[
\sum_{i=1}^k \lambda_i(m)
=
\lim_{n\to\infty}
\sup_{[v]} \frac{1}{n}
\int
\log\frac{\|\wedge^k A^n(\omega)v\|}{\|v\|}
\,d\mathbb P.
\]
A related representation for the top Lyapunov exponent appears in~\cite[Thm.~6.2]{BM}, where it is obtained by a martingale argument via~\cite[Cor.~3.3]{BM}. Here it follows directly from the variational principle.

\subsection{Vertex case} In this setting we have a linear cocycle
$
A:T\to \mathrm{Mat}(d,\mathbb R)
$. 
Theorem~\ref{mainthm:B} provides the variational formula 
\begin{equation}\label{eq:matrix-vertex-formula}
\sum_{i=1}^k \lambda_i(m)
=
\sup_{\hat\nu}
\int
\log\frac{\|\wedge^kA(t)v\|}{\|v\|}
\,d\hat\nu(t,[v]),
\end{equation}
where the supremum is taken (and attained) over all (ergodic) probability measures
\(\hat\nu\) on $T\times \hat{\mathcal{E}}_k$ that are invariant under the Markov
operator
\[
\hat P_k\phi(t,[v])
=
\int
\phi\bigl(s,[\wedge^kA(t)v]\bigr)\,Q(t,ds)
\]
and project to the stationary measure \(p\) on \(T\). Moreover, any maximizing measure has zero mass on the degeneracy locus whenever \(\lambda_k(m)>-\infty\). 

When \(T\) is finite, say \(T=\{1,\dots,n\}\), \(Q(i,\{j\})=P_{ij}\) and $p=p_1\delta_1 + \dots + p_n \delta_n$, every admissible
lift \(\hat\nu\) can be written uniquely as
\[
\hat\nu=\sum_{i=1}^n p_i\,\delta_i\otimes \eta_i,
\]
where each \(\eta_i\) is a probability measure on \(\PP(\wedge^k\mathbb R^d)\). In this
notation, and assuming that $p_i>0$, for $i=1,\dots,n$, 
the invariance condition \(\hat P_k^*\hat\nu=\hat\nu\) is equivalent to
\[
\eta_j
=
\sum_{i=1}^n \frac{p_iP_{ij}}{p_j}\,
(\wedge^kA(i))_*\eta_i. 
\]
that is, \(\eta=(\eta_i)_{i=1}^n\) is a stationary measure vector. Therefore,
\eqref{eq:matrix-vertex-formula} can be rewritten as
\[
\sum_{i=1}^k \lambda_i(m)
=
\sup_{\eta}
\sum_{i=1}^n p_i
\int 
\log\frac{\|\wedge^kA(i)v\|}{\|v\|}
\,d\eta_i([v]),
\]
where the supremum is taken over all stationary measure vectors
\(\eta=(\eta_i)_{i=1}^n\) on \(\PP(\wedge^k\mathbb R^d)\). In particular, for \(k=1\) and
\(A(i)\in \mathrm{GL}(d,\mathbb R)\), this reduces to the representation formula for the top Lyapunov exponent by Malheiro and  Viana  \cite[Prop.~3.5]{MalheiroViana2015}.

\subsection{Edge case}
In this setting we have a linear cocycle $
A:T\times T\to \mathrm{Mat}(d,\mathbb R)$ 
driven by the transition kernel \(Q(t,A)\) on \(T\times\mathscr A\). Proposition~\ref{prop:edge-variational} gives a variational formula for the Lyapunov exponents of possibly singular edge Markovian matrix products, extending the Furstenberg-type formula established in~\cite[Thm.~3.3(iii)]{CDKM22} for Lipschitz invertible cocycles with compact support. 
More precisely, 
\begin{equation}\label{eq:matrix-markov-formula}
\sum_{i=1}^k \lambda_i(m)
=
\sup_{\hat\nu}
\int
\log\frac{\|\wedge^kA(t,s)v\|}{\|v\|}
\,Q(t,ds)\,d\hat\nu(t,[v]),
\end{equation}
where the supremum is taken (and attained)  over all (ergodic) probability measures \(\hat\nu\) on \(T\times \hat{\mathcal{E}}_k\) that are invariant under the annealed operator \(\hat P_k\) from~\eqref{eq:averaged-operator} and project to the stationary measure \(p\) on \(T\). Moreover, any maximizing measure has zero mass on the degeneracy locus whenever \(\lambda_k(m)>-\infty\). 
 This formula also generalizes the result of  Duarte et al.~\cite{DDGK25} for the top Lyapunov exponent of random $\mathrm{Mat}(2, \mathbb{R})$-valued cocycles with singular components and finite alphabet $T$.  By contrast,~\eqref{eq:matrix-markov-formula} does not require either restriction.


\section{Proof of Theorems~\ref{mainthm:A} and~\ref{thm:VP}}
\label{sec:ThmA}


Throughout the proof, let $\pi:\hat X \to X$ be a standard Borel bundle with compact metric fibers. From the definition, we have a compact metric space \(Y\), a Borel set \(\mathcal E\subset X\times Y\) with nonempty compact sections $\mathcal E_x$ and a Borel isomorphism $\iota:\hat X\to \mathcal E$. Moreover, the restriction of $\iota$ 
between fibers $\pi^{-1}(x)$ and $\mathcal{E}_x$ is a homeomorphism.

\subsection{Factorization and integral representation of $\sigma$-Markov operators} 
{The notion of a \emph{factor} (random semiconjugacy) is standard in the theory of random dynamical systems: it is given by a measurable family of surjective maps intertwining the cocycles; see, for instance, the explicit treatment in Kl\"unger~\cite[{\S3}]{klunger2001periodicity}. Passing to the skew-product, this becomes an ordinary semiconjugacy and hence an intertwining relation for the associated Koopman operators. In the Markov operator framework of this paper, we encode the same idea intrinsically: 

\enlargethispage{-1cm}
\begin{lemma}\label{lem:markov-factorization} Let  $\mathcal B_\pi=\pi^*(B(X))$ be the space of $\pi$-fiberwise constant functions in~\eqref{eq:set-F}. 
Consider a $\sigma$-Markov operator $\hat{P}: B(\hat X) \to B(\hat X)$. The following are equivalent:
\begin{enumerate}
    \item $\hat{P}(\mathcal{B}_\pi) \subset \mathcal{B}_\pi$;
    \item There exists an operator $P: B(X) \to B(X)$ such that $\hat{P} \circ \pi^* = \pi^* \circ P$.
\end{enumerate}
In this case, $P$ is unique, it is a $\sigma$-Markov operator, and it is given by
\[
 P = (\pi^*)^{-1} \circ \hat{P} \circ \pi^*,
\]
where $(\pi^*)^{-1}: \mathcal{B}_\pi \to B(X)$ is the inverse of the isomorphism $\pi^*: B(X) \to \mathcal{B}_\pi$.
Moreover, if $P^*$ and $\hat{P}^*$ denote the dual operators on probability measures, it holds
\[
    \pi_* \circ \hat{P}^* = P^* \circ \pi_*.
\]
In particular, if $\hat{\nu}$ is $\hat{P}^*$-invariant, then $\mu \coloneqq \pi_* \hat{\nu}$ is $P^*$-invariant.
\end{lemma}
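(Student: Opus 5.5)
The plan is to use only that $\pi^*:B(X)\to\mathcal B_\pi$ is a bijective isometric lattice isomorphism, which holds because $\pi$ is surjective. The direction (2)$\Rightarrow$(1) is immediate: for $u\in\mathcal B_\pi$, write $u=\pi^*\varphi$; then $\hat P u=\hat P\pi^*\varphi=\pi^*(P\varphi)\in\mathcal B_\pi$. For (1)$\Rightarrow$(2), we define $P:=(\pi^*)^{-1}\circ\hat P\circ\pi^*$. This is well defined because $\hat P\pi^*\varphi\in\mathcal B_\pi$. The intertwining relation $\pi^*P=\hat P\pi^*$ then holds by construction. Uniqueness follows from injectivity of $\pi^*$: if $\pi^*P'=\hat P\pi^*=\pi^*P$, then $P'=P$. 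The same formula shows that any $P$ as in (2) must equal $(\pi^*)^{-1}\hat P\pi^*$.

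The next step is to check that $P$ is $\sigma$-Markov.

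\emph{Linearity.} This is clear.

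\emph{Positivity.} If $\varphi\ge0$, then $\pi^*\varphi\ge0$, hence $\hat P\pi^*\varphi\ge0$, so $\pi^*(P\varphi)\ge0$. Surjectivity of $\pi$ then gives $P\varphi\ge0$ pointwise, since each $x$ equals $\pi(z)$ for some $z$ and $P\varphi(x)=\hat P\pi^*\varphi(z)$.

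\emph{Unit.} We have $\pi^*1_X=1_{\hat X}$, so $\pi^*P1_X=\hat P1_{\hat X}=1_{\hat X}$, which gives $P1_X=1_X$.

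\emph{$\sigma$-order continuity.} If $\varphi_n\downarrow0$ pointwise on $X$, then $\pi^*\varphi_n\downarrow0$ pointwise on $\hat X$. Hence $\hat P\pi^*\varphi_n\downarrow0$, that is, $(P\varphi_n)(\pi(z))\downarrow0$ for every $z$. Surjectivity again gives $P\varphi_n\downarrow0$ on $X$. Monotonicity of $P\varphi_n$ follows from positivity.

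For the dual statement, I will take the duals in the usual sense: $\int\psi\,d(\hat P^*\hat\nu)=\int\hat P\psi\,d\hat\nu$. This is legitimate by the kernel representation of Lemma~\ref{lem:kernel-representation}, which ensures $\hat P^*\hat\nu$ is a genuine countably additive probability measure because of $\sigma$-order continuity. The same holds for $P$. Then for $\varphi\in B(X)$,
\[
\int\varphi\,d\pi_*(\hat P^*\hat\nu)=\int\hat P(\pi^*\varphi)\,d\hat\nu=\int\pi^*(P\varphi)\,d\hat\nu=\int P\varphi\,d\pi_*\hat\nu=\int\varphi\,d P^*(\pi_*\hat\nu).
\]
Testing on indicators gives $\pi_*\circ\hat P^*=P^*\circ\pi_*$. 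If $\hat P^*\hat\nu=\hat\nu$, then $P^*\mu=\pi_*\hat P^*\hat\nu=\pi_*\hat\nu=\mu$.

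None of these steps is a real obstacle. The only points needing care are that pointwise statements on $X$ are recovered from those on $\hat X$ via surjectivity, and that $\pi^*$ maps onto $\mathcal B_\pi$. For the latter, a $\pi$-fiberwise constant bounded Borel $u$ descends to a function $\varphi$ on $X$, and one must show that $\varphi$ is Borel. I would handle this with a measurable section or image argument for the standard Borel bundle: the bundle structure via $\iota$ gives the Borel set $\mathcal E\subset X\times Y$ with compact sections, so by a measurable selection theorem (compact sections, e.g.\ Kuratowski--Ryll-Nardzewski or Arsenin--Kunugui) there is a Borel section $s:X\to\hat X$ with $\pi\circ s=\mathrm{id}$. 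Then $\varphi=u\circ s$ is Borel. This measurability step is the one I expect to be the main obstacle.
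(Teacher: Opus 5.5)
Your proof is correct and follows the paper's argument step by step: you define $P=(\pi^*)^{-1}\hat P\pi^*$, get uniqueness from injectivity of $\pi^*$, transfer positivity, unitality and $\sigma$-order continuity through the surjectivity of $\pi$, and verify $\pi_*\circ\hat P^*=P^*\circ\pi_*$ by the same duality computation. The one extra point you address is that a bounded Borel fiberwise-constant function descends to a Borel function on $X$; the paper simply takes this for granted through the identification $\mathcal B_\pi=\pi^*(B(X))$, and your Borel-section argument (Novikov/Arsenin--Kunugui uniformization of $\mathcal E$) is valid, while a lighter alternative is Suslin's theorem, since $\{\varphi>\alpha\}=\pi(\{u>\alpha\})$ and its complement $\pi(\{u\le\alpha\})$ are both analytic.
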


\begin{proof}
Since $\pi$ is surjective, the pullback $\pi^*: B(X) \to B(\hat X)$ defined by $\pi^*\varphi = \varphi \circ \pi$ is an isometric lattice embedding. Consequently, $\pi^*$ is a bijection from $B(X)$ onto its range $\pi^*(B(X)) = \mathcal{B}_\pi$. Let $(\pi^*)^{-1}: \mathcal{B}_\pi \to B(X)$ denote its inverse. Thus, for every $f\in\mathcal B_\pi$, the function $\varphi=(\pi^*)^{-1}f$ is the unique element of $B(X)$ such that $f=\varphi\circ\pi$.

\noindent {(2) $\implies$ (1):} Suppose there exists $P: B(X) \to B(X)$ satisfying $\hat{P} \circ \pi^* = \pi^* \circ P$. Let $f \in \mathcal{B}_\pi$. By definition, $f = \pi^*\varphi$ for some $\varphi \in B(X)$. Then
\[
\hat{P}f = \hat{P}(\pi^*\varphi) = \pi^*(P\varphi).
\]
Since $P\varphi \in B(X)$, the right-hand side belongs to $\mathcal{B}_\pi$. Thus $\hat{P}(\mathcal{B}_\pi) \subset \mathcal{B}_\pi$.

\noindent {(1) $\implies$ (2):} Suppose $\hat{P}(\mathcal{B}_\pi) \subset \mathcal{B}_\pi$. For any $\varphi \in B(X)$, consider the function $\hat{P}(\pi^*\varphi)$. By hypothesis, this function belongs to $\mathcal{B}_\pi$. Therefore, we can define an operator $P: B(X) \to B(X)$ by $P\varphi \coloneqq (\pi^*)^{-1}(\hat{P}(\pi^*\varphi))$. 
By construction, $\pi^*(P\varphi) = \hat{P}(\pi^*\varphi)$, which is exactly the commutation relation $\pi^* \circ P = \hat{P} \circ \pi^*$.

It remains to prove that if these conditions hold, $P$ is unique and is a $\sigma$-Markov operator. Assume (2).
\begin{enumerate}[label=-,leftmargin=0.35cm]
     \item \emph{Uniqueness of $P$:} For all $\varphi \in B(X)$, $\pi^*(P\varphi) = \hat{P}(\pi^*\varphi)$. Since $\pi^*$ is injective (due to the surjectivity of $\pi$), $P\varphi$ is uniquely determined.
    \item \emph{Linearity and positivity of $P$:} this follows since $\hat{P}$ and $\pi^*$ are linear and positive, and $(\pi^*)^{-1}$ is linear and positive (if $f \in \mathcal{B}_\pi$ and $f \ge 0$, then its factorization $\varphi \ge 0$).
    \item \emph{Preservation of the unity:} Since $\hat{P}{1}_{\hat X} = {1}_{\hat X}$ and ${1}_{\hat X} = \pi^*({1}_X)$, we have $\pi^*(P{1}_X) = \hat{P}(\pi^*{1}_X) = \hat{P}{1}_{\hat X} = {1}_{\hat X} = \pi^*({1}_X)$. Injectivity of $\pi^*$ implies $P{1}_X = {1}_X$.
    \item \emph{$\sigma$-order continuity of $P$:} Let $\varphi_n \downarrow 0$ pointwise in $B(X)$. Then $\pi^*\varphi_n \downarrow 0$ pointwise in $B(\hat X)$. Since $\hat P$ is $\sigma$-order continuous, $\hat P(\pi^*\varphi_n)\downarrow 0$. Using the factorization, $\pi^*(P\varphi_n)\downarrow 0$. Since $\pi$ is surjective, pointwise convergence of the pullbacks implies pointwise convergence of the base functions. Thus $P\varphi_n\downarrow 0$.
\end{enumerate}
The duality on measures follows from the adjoint relation. For any $\varphi \in B(X)$ and $\hat{\nu} \in \mathcal{P}(\hat X)$,
\begin{align*}
    \int \varphi \, d(\pi_* \hat{P}^* \hat{\nu}) &= \int \pi^*\varphi \, d(\hat{P}^* \hat{\nu}) = \int \hat{P}(\pi^*\varphi) \, d\hat{\nu}
\\
&= \int \pi^*(P\varphi) \, d\hat{\nu} = \int P\varphi \, d(\pi_*\hat{\nu}) = \int \varphi \, d(P^*(\pi_*\hat{\nu})).
\end{align*}
Thus $\pi_* \circ \hat{P}^* = P^* \circ \pi_*$.
\end{proof}

{Assuming the Axiom of Choice, the dual of the space $B(Z)$ of bounded Borel functions on a standard Borel space $(Z,\mathscr B)$ contains Banach limits. As discussed in \cite{MOMarkovRep}, Markov operators on $B(Z)$ constructed from such functionals cannot be represented by transition probabilities. Recall that a \emph{transition probability kernel} $q(z,B)$ is a function such that $q(z,\cdot)$ is a Borel probability measure for every $z\in Z$ and $z\mapsto q(z,B)$ is measurable for every $B\in\mathscr B$. The key property that Banach limits fail, and which is needed to obtain an integral representation by a transition kernel, is $\sigma$-order continuity.}
 This contrasts with the setting of Markov operators on $L^\infty(m)$, where weak$^*$ continuity (inherent by duality with $L^1(m)$) automatically implies $\sigma$-order continuity. In such a context, every Markov operator in $L^\infty(m)$ is induced by an $m$-nonsingular transition probability and can be realized as the annealed operator of an iid\ random dynamical system (see \cite[Remark~2.4 and Section~1.3.4]{barrientos2025finitude}).

\begin{lemma}[{\cite[Thm.~19.10]{AliprantisBorder}}]  \label{lem:kernel-representation} Let $(Z,\mathscr B)$ be a standard Borel space. Then, 
 $Q:B(Z)\to B(Z)$ is a $\sigma$-Markov operator if and only if 
there exists a transition probability kernel $q:Z\times\mathscr{B} \to[0,1]$ such that
\[
(Q\varphi)(z)=\int \varphi(w)\,q(z,dw)\qquad \varphi\in B(Z),\ \,z\in Z.
\]
Moreover $q(z,A)=Q1_A(z)$ for every $A\in\mathscr{B}$, and the dual action on Borel probabilities is given by
\[
(Q^*\nu)(A)=\int q(z,A)\,d\nu(z).
\]
\end{lemma}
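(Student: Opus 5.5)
The plan is to prove the two implications directly, taking $q(z,A):=Q1_A(z)$ as the candidate kernel. The whole argument is measure-theoretic and does not use that $Z$ is standard Borel; it only uses positivity, $Q1_Z=1_Z$, and $\sigma$-order continuity.

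For the easy direction, assume $Q\varphi(z)=\int\varphi\,q(z,dw)$ for a transition probability kernel $q$.
\begin{enumerate}[label=(\alph*)]
\item The function $Q\varphi$ is bounded by $\|\varphi\|_\infty$.
\item It is Borel in $z$. This holds for indicators by the kernel's measurability, then for simple functions by linearity, and then for bounded Borel $\varphi$ by uniform approximation.
\item Positivity and $Q1_Z=1_Z$ are immediate because each $q(z,\cdot)$ is a probability.
\item If $\varphi_n\downarrow0$ pointwise, then $\int\varphi_n\,q(z,dw)\downarrow0$ for every $z$ by dominated (or monotone) convergence, since $0\le\varphi_n\le\varphi_1$ is bounded. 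This gives $\sigma$-order continuity.
\end{enumerate}

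For the converse, let $Q$ be $\sigma$-Markov and define $q(z,A):=(Q1_A)(z)$.
\begin{enumerate}[label=(\alph*)]
\item Since $Q1_A\in B(Z)$, the map $z\mapsto q(z,A)$ is measurable.
\item Positivity and $Q1_Z=1_Z$ give $0\le q(z,A)\le 1$ and $q(z,Z)=1$.
\item Linearity gives finite additivity of $A\mapsto q(z,A)$ for each fixed $z$.
\item For countable additivity, take disjoint $A_i$ with union $A$ and set $B_n:=\bigcup_{i>n}A_i$. Then $1_{B_n}\downarrow0$ pointwise, so $\sigma$-order continuity yields $Q1_{B_n}(z)\downarrow0$ for every $z$. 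Finite additivity then gives $q(z,A)=\sum_{i\le n}q(z,A_i)+q(z,B_n)\to\sum_i q(z,A_i)$.
\end{enumerate}
I expect this last step to be the only genuine point: it is exactly where Banach-limit-type Markov operators fail. It matters that the decrease holds at every $z$, not merely almost everywhere, so that $q(z,\cdot)$ is a measure for each $z$.

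Next I identify $Q$ with the kernel integral. Both sides agree on indicators by definition, hence on simple functions by linearity. A positive operator with $Q1=1$ satisfies $|Q\varphi|\le Q|\varphi|\le\|\varphi\|_\infty$, so $Q$ is a contraction in sup norm, and the integral operator is one as well. Approximating a bounded Borel $\varphi$ uniformly by simple functions therefore gives equality everywhere. The identity $q(z,A)=Q1_A(z)$ holds by construction, and it also shows the kernel is uniquely determined by $Q$.

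Finally, for the dual action, the adjoint is defined by $\int\varphi\,d(Q^*\nu)=\int Q\varphi\,d\nu$. Applying this with $\varphi=1_A$ gives $(Q^*\nu)(A)=\int Q1_A\,d\nu=\int q(z,A)\,d\nu(z)$, which is the stated formula. The same countable additivity argument, combined with monotone convergence in $\nu$, confirms that $Q^*\nu$ is again a probability measure.
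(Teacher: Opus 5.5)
Your proof is correct and complete. The paper gives no argument of its own here; it simply quotes \cite[Thm.~19.10]{AliprantisBorder}. Your direct construction $q(z,A):=Q1_A(z)$ is the standard argument for that result: countable additivity comes from $\sigma$-order continuity, and the identification is extended from simple functions by sup-norm contractivity. As you observe, it does not need the standard-Borel hypothesis.

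One point is worth stating explicitly. In $B(Z)$ the lattice infimum of a decreasing sequence equals its pointwise infimum, because countable pointwise infima of Borel functions are Borel. So reading $Q1_{B_n}\downarrow 0$ as convergence at every $z$ is exactly what the definition of $\sigma$-order continuity provides.
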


\subsection{Fiber-sup operator} 
{Given an extended real-valued function $u$ on $\hat X$, we define
\[
(\mathcal M u)(x) := \sup_{z \in \pi^{-1}(x)} u(z), \qquad x\in X.
\]
Since the fibers $\pi^{-1}(x)$ are compact, if $u$ is fiberwise upper semicontinuous, then the supremum is attained, and}
$$
(\mathcal M u)(x) = \max_{z \in \pi^{-1}(x)} u(z) \in [-\infty,\infty).
$$ 
For a general Borel function $u$, the function $\mathcal M u$ need not be Borel measurable. To ensure that $\mathcal M u$ can be integrated when evaluating $P$ (as required by Lemma~\ref{lem:kernel-representation}), we use the following standard notions from measure theory; see~\cite[\S12]{AliprantisBorder}.

Let $(S, \mathscr{S}, \mu)$ be a probability space and $(Z,\mathscr{B})$ a Polish space. A function $f: S \to Z$ is \emph{$\mu$-measurable} if $f^{-1}(B) \in \mathscr{S}_\mu$ for every $B \in \mathscr{B}$, where $\mathscr{S}_\mu$ is the $\mu$-completion of $\mathscr{S}$. The function $f$ is \emph{universally measurable} if it is $\nu$-measurable for every probability measure $\nu$ on~$(S,\mathscr{S})$.

\begin{lemma} \label{lem:propiedades-M} 
The function $\mathcal{M}u:X\to [-\infty,\infty]$ is universally measurable for every Borel measurable function $u: \hat X \to [-\infty,\infty]$. Moreover, for functions $u$, $v$ on $\hat X$ and $\varphi$ on $X$, the following properties  hold:
\begin{enumerate}[label=(\roman*)]
\item \emph{Monotonicity}: if $u \le v$, then $\mathcal{M} u \le \mathcal{M} v$;
\item \emph{Subadditivity}: $\mathcal{M}(u + v) \le \mathcal{M} u + \mathcal{M} v$ (whenever the sum is well-defined);
\item \emph{Normalization}:  $\mathcal{M}(\varphi \circ \pi) = \varphi$.
\end{enumerate}
\end{lemma}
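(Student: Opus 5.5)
The plan has two parts. The three algebraic properties are elementary pointwise consequences of the definition of $\mathcal M$ as a fiberwise supremum. Universal measurability will come from the theory of analytic sets, via the product representation $\iota:\hat X\to\mathcal E\subset X\times Y$.

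\emph{Algebraic properties.} For monotonicity, if $u\le v$ then for each $x$ and each $z\in\pi^{-1}(x)$ we have $u(z)\le v(z)\le \mathcal M v(x)$; taking the supremum over $z$ gives $\mathcal M u(x)\le\mathcal M v(x)$. Subadditivity is the same argument: $u(z)+v(z)\le \mathcal M u(x)+\mathcal M v(x)$ for every $z$ in the fiber, whenever the sums are defined. The convention excluding $\infty-\infty$ is exactly the hypothesis ``whenever the sum is well-defined.'' Normalization holds because $\varphi\circ\pi$ is constant, equal to $\varphi(x)$, on the fiber $\pi^{-1}(x)$, and this fiber is nonempty since $\pi$ is surjective.

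\emph{Universal measurability.} It suffices to show that for every $c\in\mathbb R$ the superlevel set
\[
\{x:\mathcal M u(x)>c\}=\pi\bigl(\{z\in\hat X: u(z)>c\}\bigr)
\]
is universally measurable. The equality is immediate from the definition of a supremum with strict inequality. The set $B_c:=\{u>c\}$ is Borel in the standard Borel space $\hat X$, and $\pi$ is a Borel map between standard Borel spaces. Hence $\pi(B_c)$ is an analytic subset of $X$; equivalently, it is the projection onto $X$ of the Borel set $\iota(B_c)\subset X\times Y$. By Lusin's theorem, analytic sets in standard Borel spaces are universally measurable; this is the result I would cite from \cite[\S12]{AliprantisBorder}. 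The sets $\{\mathcal M u>c\}$ for rational $c$, together with $\{\mathcal Mu=-\infty\}=\bigcap_{c\in\mathbb Q}\{\mathcal Mu\le c\}$ and the analogous set for $+\infty$, generate the Borel $\sigma$-algebra of $[-\infty,\infty]$. Hence $\mathcal M u$ is $\mu$-measurable for every probability $\mu$ on $X$.

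The main obstacle is only the analytic-set step. One must note that forward images of Borel sets under Borel maps are merely analytic, not Borel, and rely on the measurability of analytic sets with respect to the completion of every Borel probability. Compactness of the fibers plays no role here; it only ensures the supremum is attained when $u$ is fiberwise upper semicontinuous.
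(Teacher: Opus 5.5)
Your proposal is correct and follows essentially the same route as the paper. The algebraic properties are checked pointwise on each fiber. Universal measurability comes from writing $\{\mathcal M u>c\}$ as the projection of the Borel set $\iota(\{u>c\})\subset X\times Y$, which is analytic and hence universally measurable, exactly as in the paper's citation chain from \cite[\S12]{AliprantisBorder}.
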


\begin{proof} Item (i) follows immediately from taking suprema over the same fiber.  To show~(ii), fix $x \in X$. For any $z \in \pi^{-1}(x)$, we have $u(z) \le \mathcal{M}u(x)$ and $v(z) \le \mathcal{M}v(x)$. Summing these inequalities (whenever the sum is well-defined) yields
\[
    u(z) + v(z) \le \mathcal{M}u(x) + \mathcal{M}v(x).
\]
Taking the supremum over $z \in \pi^{-1}(x)$, we get $\mathcal{M}(u+v)(x) \le \mathcal{M}u(x) + \mathcal{M}v(x)$. For~(iii), let $\varphi: X \to [-\infty,\infty]$. If $u = \varphi \circ \pi$, then on any fiber $\pi^{-1}(x)$, the function $u$ is constant and equal to $\varphi(x)$. Therefore, the supremum is  $\varphi(x)$.

For the universal measurability claim, we identify $\hat X$ with the Borel subset $\mathcal E \subseteq X \times Y$ via the isomorphism $\iota$. Then $\pi$ corresponds to the projection $\mathrm{pr}_X:X\times Y\to X$. Now, for any $\alpha \in \mathbb{R}$, the level set of $\mathcal Mu$ is
    \[
    A(\alpha):=\{x \in X : (\mathcal M u)(x) > \alpha\} = \mathrm{pr}_X\left( \{(x, y) \in \mathcal{E} : u(x, y) > \alpha\} \right).
    \]
    Since $u$ is a Borel function, the set inside the projection $\mathrm{pr}_X$ is a Borel subset of $X \times Y$. By \cite[Thm.~12.27]{AliprantisBorder}, every Borel set in a Polish space is an analytic set. Furthermore, by \cite[Thm.~12.24]{AliprantisBorder}, the continuous image of an analytic set is analytic. Since $\mathrm{pr}_X$ is continuous, $A(\alpha)$ is an analytic subset of $X$.
    Finally, by \cite[Thm.~12.41]{AliprantisBorder}, every analytic subset of a Polish space is {universally measurable}. Thus, $\mathcal M u$ is universally measurable.
\end{proof}


The fiber-sup operator \(\mathcal M\) sends Borel functions on \(\hat X\) to universally measurable functions on \(X\) (Lemma~\ref{lem:propiedades-M}), and these need not be Borel. Since the domination argument below requires applying \(P\) and \(\hat P\) to such functions, we first extend \(\sigma\)-Markov operators from \(B(Z)\) to universally measurable functions taking values in \([-\infty,\infty)\).

\begin{lemma}\label{lem:extend-markov-univ}
Let \(Q:B(Z)\to B(Z)\) be \(\sigma\)-Markov operator where $Z$ is a standard Borel space. Then \(Q\) admits a natural extension to universally measurable functions \(f:Z\to[-\infty,\infty)\). Moreover, this extension is monotone: if \(f\le g\), then \(Qf\le Qg\).
\end{lemma}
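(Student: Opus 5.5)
We extend $Q$ through its kernel. By Lemma~\ref{lem:kernel-representation} there is a transition probability kernel $q$ on $Z\times\mathscr B$ with $Q\varphi(z)=\int\varphi\,dq(z,\cdot)$ for $\varphi\in B(Z)$. Every Borel probability measure on $Z$ extends uniquely to the universally measurable $\sigma$-algebra $\mathscr U\supset\mathscr B$, and in particular so does each $q(z,\cdot)$. Given a universally measurable $f:Z\to[-\infty,\infty)$, we therefore set
\[
Qf(z):=\int f^+\,dq(z,\cdot)-\int f^-\,dq(z,\cdot)\in[-\infty,\infty],
\]
with the convention $Qf(z):=-\infty$ whenever $\int f^-\,dq(z,\cdot)=\infty$, and $Qf(z):=+\infty$ if $\int f^+\,dq(z,\cdot)=\infty$ while the negative part is finite. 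If one wants values in $[-\infty,\infty)$ for $f$ bounded above by a constant, this case cannot occur. For bounded Borel $f$ the definition agrees with the original $Q$, so the extension is consistent. Monotonicity is immediate, since $f\le g$ gives $f^+\le g^+$ and $f^-\ge g^-$, and integration against a positive measure is monotone; the conventions are chosen precisely so that this ordering survives when infinite values appear. Finally, $Q(\varphi\circ\pi)$ style identities and $\sigma$-order continuity extend by monotone convergence: for $f\ge0$ we have $Qf=\sup_n Q(\min(f,n))$ pointwise, taken along truncations.

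The main obstacle is measurability of $z\mapsto Qf(z)$, because $f$ is only universally measurable. For $f\ge0$ bounded and universally measurable, we handle this as follows. Fix a probability $\nu$ on $Z$ and pick Borel $g_1\le f\le g_2$ with $g_1=g_2$ $\eta$-a.e., where $\eta:=Q^*\nu=\int q(z,\cdot)\,d\nu(z)$. Then $Qg_1\le Qf\le Qg_2$ pointwise, and
\[
\int (Qg_2-Qg_1)\,d\nu=\int(g_2-g_1)\,d\eta=0,
\]
so $Qf$ is sandwiched between Borel functions that agree $\nu$-a.e. Hence $Qf$ is $\nu$-measurable, and since $\nu$ was arbitrary, $Qf$ is universally measurable. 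Unbounded nonnegative $f$ are then covered by taking the increasing limit of the truncations $\min(f,n)$, and general $f$ by splitting into $f^\pm$ and using the conventions above.

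The last detail is the subtlety that $\int f\,dq(z,\cdot)$ uses the completion of each $q(z,\cdot)$. Here we only need that a universally measurable set is $q(z,\cdot)$-measurable for every $z$, which holds by definition. So the definition is pointwise meaningful, and measurability of the resulting function is exactly the sandwich step above.
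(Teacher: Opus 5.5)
Your proposal is correct and follows essentially the same route as the paper: represent $Q$ by its kernel, integrate $f$ against the completed measures $q(z,\cdot)$, get universal measurability of $Qf$ by comparing $f$ with Borel functions that agree with it off a $Q^*\nu$-null set, and reach unbounded $f$ by truncation. The only cosmetic difference is that you use a Borel sandwich $g_1\le f\le g_2$ with the duality $\int Qg\,d\nu=\int g\,d(Q^*\nu)$, whereas the paper takes a single Borel version $\tilde f$ and argues via Fubini on $Z\times Z$.
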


\begin{proof} Let \(q(z,dw)\) be the transition kernel given by Lemma~\ref{lem:kernel-representation}.
Assume first that \(f:Z\to[-\infty,\infty)\) is universally measurable and bounded from above. For each \(z\in Z\), the function \(f\) is measurable with respect to the completion of \(\mathscr B\) under the probability measure \(q(z,\cdot)\). Hence
\[
Qf(z)=\int f(w)\,q(z,dw)
\]
is well defined in \([-\infty,\infty)\).

We claim that \(Qf\) is universally measurable. Let \(\nu\) be a probability measure on \((Z,\mathscr B)\), and define the probability measure \(\eta\) on \(Z\times Z\) by
\[
\eta(A):=\int q(z,A_z)\,d\nu(z),
\qquad
A_z:=\{w\in Z:(z,w)\in A\}.
\]
Since \(f\) is universally measurable, there exists a Borel function \(\tilde f:Z\to[-\infty,\infty)\) such that
\[
\tilde f(w)=f(w)
\qquad\text{for \(\eta\)-a.e. }(z,w)\in Z\times Z.
\]
By Fubini's theorem, this implies that
\[
\tilde f=f
\qquad\text{\(q(z,\cdot)\)-a.e. for \(\nu\)-a.e. }z\in Z.
\]
Therefore,
\[
Qf(z)=Q\tilde f(z)
\qquad\text{for \(\nu\)-a.e. }z.
\]
Since \(\tilde f\) is Borel, the function \(Q\tilde f\) is Borel measurable. Hence \(Qf\) is \(\nu\)-measurable. As \(\nu\) was arbitrary, \(Qf\) is universally measurable.

Monotonicity for bounded-above universally measurable functions is immediate: if \(f\le g\), then
\[
Qf(z)=\int f\,dq(z,\cdot)\le \int g\,dq(z,\cdot)=Qg(z)
\]
for every \(z\in Z\).

Now let \(f:Z\to[-\infty,\infty)\) be universally measurable but not necessarily bounded from above, and define \(f_N:=\min\{f,N\}\). Then \(f_N\uparrow f\) pointwise, and each \(f_N\) is universally measurable and bounded from above, so each \(Qf_N\) is well defined and universally measurable. We define
\[
Qf:=\sup_{N\ge1}Qf_N,
\qquad
Q^n f:=\sup_{N\ge1}Q^n f_N.
\]
Since each \(Q^n f_N\) is universally measurable, so is \(Q^n f\). Moreover, by the monotonicity already proved for bounded-above functions,
\[
f_N\le f_{N+1}
\quad\Longrightarrow\quad
Q^n f_N\le Q^n f_{N+1},
\]
so \(Q^n f_N\uparrow Q^n f\) pointwise as \(N\to\infty\). This proves the lemma.
\end{proof}

\begin{rem}\label{rem:extend-factor}
The extended operator still preserves the factor identity. More precisely, if
$\hat P\circ\pi^*=\pi^*\circ P$ on $B(X)$,
then for every universally measurable \(\varphi:X\to[-\infty,\infty)\) bounded from above,
$\hat P(\varphi\circ\pi)=(P\varphi)\circ\pi$.
Indeed, both sides are computed by integration against the same pushforward kernel on \(X\), and hence agree pointwise. By truncation, the same conclusion extends to $\varphi$.
\end{rem}

\begin{lemma}\label{lem:domination}
Let $P: B(X) \to B(X)$ and $\hat P: B(\hat X) \to B(\hat X)$ be $\sigma$-Markov operators such that  $\hat P\circ \pi^* = \pi^* \circ P$. Let $u: \hat X \to [-\infty, \infty)$ be a Borel measurable function. Then 
    \[
    \mathcal M(\hat P^n u) \le P^n(\mathcal M u) \qquad \text{for all $n\geq 1$}.
    \]
\end{lemma}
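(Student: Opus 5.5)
The plan is to prove the case $n=1$ first and then iterate using monotonicity. For $n=1$, the key point is pointwise domination: $u\le \pi^*(\mathcal M u)=(\mathcal Mu)\circ\pi$ on $\hat X$, since every $z\in\pi^{-1}(x)$ satisfies $u(z)\le \mathcal Mu(x)$. The function $\mathcal M u$ takes values in $[-\infty,\infty]$ and may fail to be bounded above. I will first treat $u$ bounded above, where $\mathcal Mu$ is universally measurable by Lemma~\ref{lem:propiedades-M} and bounded above, so the extended operators of Lemma~\ref{lem:extend-markov-univ} apply. By the monotonicity in Lemma~\ref{lem:extend-markov-univ} and Remark~\ref{rem:extend-factor},
\[
\hat P u\le \hat P\bigl((\mathcal Mu)\circ\pi\bigr)=(P\mathcal Mu)\circ\pi .
\]
Taking fiber suprema and using Lemma~\ref{lem:propiedades-M}(i) and (iii) then gives $\mathcal M(\hat Pu)\le \mathcal M\bigl((P\mathcal Mu)\circ\pi\bigr)=P\mathcal Mu$.

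For general $u$ I will truncate, setting $u_N=\min\{u,N\}$. By definition $\hat Pu=\sup_N\hat Pu_N$, and fiber suprema commute with increasing suprema. Hence
\[
\mathcal M(\hat Pu)=\sup_N\mathcal M(\hat Pu_N)\le\sup_N P\mathcal M u_N .
\]
Since $\mathcal M u_N=\min\{\mathcal Mu,N\}$, this last supremum equals $P\mathcal Mu$ by the truncation definition of the extension. If $\mathcal Mu$ takes the value $+\infty$ somewhere, $P\mathcal Mu$ is defined as a monotone limit and can be $+\infty$, so the inequality remains trivially valid.

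For the induction, assume $\mathcal M(\hat P^{n}u)\le P^{n}\mathcal Mu$. Applying the case $n=1$ to the function $\hat P^{n}u$ gives
\[
\mathcal M(\hat P^{n+1}u)\le P\,\mathcal M(\hat P^n u)\le P\bigl(P^n\mathcal Mu\bigr)=P^{n+1}\mathcal Mu,
\]
where the second step uses monotonicity of $P$. Two small points need checking here. First, the case $n=1$ was stated for Borel $u$, while $\hat P^n u$ is Borel by the kernel representation of Lemma~\ref{lem:kernel-representation} (for Borel $u$, truncation keeps things Borel). Second, the extended operator must satisfy $P(P^n g)=P^{n+1}g$, which holds for the truncation-defined extension by monotone convergence, since the kernels compose.

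I expect the main obstacle to be measure-theoretic bookkeeping rather than the inequality itself. The issue is that $\mathcal M$ produces only universally measurable functions, and Remark~\ref{rem:extend-factor} must be justified for them. I will handle this by noting that the pushforward of $\hat q(z,\cdot)$ under $\pi$ equals $q(\pi z,\cdot)$, which follows from the intertwining applied to indicators. Integrating $\varphi\circ\pi$ against $\hat q(z,\cdot)$ therefore equals integrating $\varphi$ against $q(\pi z,\cdot)$, also for completion-measurable $\varphi$. Everything else is monotonicity.
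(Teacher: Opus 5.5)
Your $n=1$ step is the same as the paper's argument (domination $u\le(\mathcal Mu)\circ\pi$, positivity, the extended factor identity, normalization of $\mathcal M$, then truncation), and your kernel-pushforward justification of Remark~\ref{rem:extend-factor} is fine. The gap is in the induction step. There you use the identity $P(P^ng)=P^{n+1}g$ for $g=\mathcal Mu$ not bounded above, and you attribute it to monotone convergence. In Lemma~\ref{lem:extend-markov-univ}, $P^{n+1}g$ means $\sup_N P^{n+1}(\min\{g,N\})$, whereas $P(P^ng)$ means $\sup_M P(\min\{P^ng,M\})$. Since $P^n(\min\{g,N\})\le\min\{P^ng,N\}$, one always has $P^{n+1}g\le P(P^ng)$. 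The same inequality for $\hat P$ is why your step $\mathcal M(\hat P^{n+1}u)\le\mathcal M(\hat P(\hat P^nu))$ is harmless. On the $P$ side, however, your chain needs the reverse inequality, and that can fail. Monotone convergence along $P^n(\min\{g,N\})\uparrow P^ng$ requires the negative parts to be $q(x,\cdot)$-integrable, and they need not be, even when those of $P^ng$ are.

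For a concrete failure, take the countable space $\{z_0\}\cup\{w_k\}_{k\ge1}\cup\{y_k^\pm\}_{k\ge1}$ with $q(z_0,\cdot)=\sum_k2^{-k}\delta_{w_k}$, $q(w_k,\cdot)=\frac12(\delta_{y_k^+}+\delta_{y_k^-})$, the points $y_k^\pm$ absorbing, and $g(y_k^\pm)=\pm4^k$, $g=0$ elsewhere. Then $Pg=0$ on $\{z_0\}\cup\{w_k\}$, so $P(Pg)(z_0)=0$. On the other hand, $P^2(\min\{g,N\})(z_0)=\sum_k2^{-k-1}(\min\{4^k,N\}-4^k)=-\infty$ for every $N$, so $P^2g(z_0)=-\infty$. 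Hence your chain only gives $\mathcal M(\hat P^{n+1}u)\le P(P^n\mathcal Mu)$, which can be strictly weaker than the asserted bound $P^{n+1}\mathcal Mu$. Your argument would be valid if $P^n$ on functions unbounded above were \emph{defined} as the $n$-fold iterate of the extended $P$, but that is not the definition in Lemma~\ref{lem:extend-markov-univ}.

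The repair is to keep the truncation through the induction and take the limit only at the end. For fixed $N$, the functions $\hat P^ju_N$, $\mathcal Mu_N$, $\mathcal M(\hat P^ju_N)$ and $P^j(\mathcal Mu_N)$ are all bounded above by $N$. So composition of kernels is legitimate (Tonelli applied to $N-(\cdot)$), and your induction yields $\mathcal M(\hat P^nu_N)\le P^n(\mathcal Mu_N)$. Only then let $N\to\infty$, using:
\begin{itemize}
\item $\hat P^nu=\sup_N\hat P^nu_N$;
\item the commutation of $\mathcal M$ with increasing suprema;
\item $\mathcal Mu_N=\min\{\mathcal Mu,N\}$;
\item $P^n(\mathcal Mu)=\sup_NP^n(\mathcal Mu_N)$.
\end{itemize}
This is exactly the paper's proof, and it never needs the semigroup property for functions that are not bounded above.
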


\begin{proof}
Let $u:\hat X\to[-\infty,\infty)$ be Borel. For each $N\ge1$ set $u_N:=\min\{u,N\}$. Then $u_N$ is Borel
and bounded from above, hence $\hat P^n u_N$ is well-defined for every $n\ge1$ and remains bounded from above.
By Lemma~\ref{lem:propiedades-M}, the function $\mathcal{M}u_N$ is universally measurable and bounded from above, and
by the definition of $\mathcal{M}$ we have $u_N\le (\mathcal{M}u_N)\circ\pi$ pointwise on $\hat X$. Applying the positive operator
$\hat P$ and using the (extended) factor property gives
\[
   \hat P u_N \le \hat P\bigl((\mathcal{M}u_N)\circ\pi\bigr) = \bigl(P(\mathcal{M}u_N)\bigr)\circ\pi.
\]
Applying $\mathcal{M}$ and using its monotonicity and normalization (Lemma~\ref{lem:propiedades-M}) yields
\[
   \mathcal{M}(\hat P u_N)\le \mathcal{M}\!\left(\bigl(P(\mathcal{M}u_N)\bigr)\circ\pi\right)=P(\mathcal{M}u_N).
\]
Since $\hat P u_N$ is again bounded from above, the same argument applies to $\hat P u_N$ in place of $u_N$
shows $\mathcal{M}(\hat P^2 u_N)\le P(\mathcal{M}(\hat P u_N))$, and combining with the previous inequality and monotonicity of $P$
gives $\mathcal{M}(\hat P^2 u_N)\le P^2(\mathcal{M}u_N)$. Iterating this reasoning yields, for every $n\ge1$,
\[
   \mathcal{M}(\hat P^n u_N)\le P^n(\mathcal{M}u_N).
\]
Now $u_N\uparrow u$ pointwise, hence by positivity of $\hat P^n$ we have $\hat P^n u_N\uparrow \hat P^n u$
(pointwise, by the truncation definition), and therefore $\mathcal{M}(\hat P^n u_N)\uparrow \mathcal{M}(\hat P^n u)$ by monotonicity
of $\mathcal{M}$. On the other hand, $\mathcal{M}u_N=\min\{\mathcal{M}u,N\}$, so $$P^n(\mathcal{M}u_N)\uparrow P^n(\mathcal{M}u)$$ by positivity of $P^n$ and the
truncation definition. Passing to the limit $N\to\infty$ in $\mathcal{M}(\hat P^n u_N)\le P^n(\mathcal{M}u_N)$ gives
\[
   \mathcal{M}(\hat P^n u)\le P^n(\mathcal{M}u)\qquad\text{for all }n\ge1,
\]
as claimed.
%
\end{proof}



The following  lemma is the main tool used to construct maximizing lifts in the proof of Theorem~\ref{thm:VP}.
The proof is based on Aumann's measurable selection theorem~\cite[Cor.~18.27]{AliprantisBorder}.

\begin{lemma}\label{lem:measurable-max}
Let $\mu$ be a probability measure on $X$ and consider a fiberwise upper semicontinuous function $u: \hat X \to [-\infty, \infty)$. Define
    \[
    K_u(x) := \{z \in \hat X_x : u(z) = (\mathcal M u)(x)\}.
    \]
    Then, there is a measurable function $s: X \to \hat X$ such that  $s(x) \in K_u(x)$ for $\mu$-a.e.~$x$.
\end{lemma}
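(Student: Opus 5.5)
We identify $\hat X$ with the Borel set $\mathcal E\subset X\times Y$ via $\iota$, so that $\pi$ becomes the restriction of $\mathrm{pr}_X$ and the fiber $\hat X_x$ becomes the compact section $\mathcal E_x$. The plan is to show that the correspondence $x\mapsto K_u(x)$ has a graph that is analytic (in fact Borel up to universally measurable modifications). We then apply a measurable selection theorem of Aumann type (\cite[Cor.~18.27]{AliprantisBorder}) with respect to the completion of $\mu$. Finally, we modify the resulting $\mu$-measurable selector on a $\mu$-null set to obtain a Borel map.

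First, nonemptiness. By Lemma~\ref{lem:propiedades-M}, $\mathcal Mu$ is universally measurable. Since $u$ restricted to $\mathcal E_x$ is upper semicontinuous and $\mathcal E_x$ is compact and nonempty, the maximum is attained, so $K_u(x)\neq\emptyset$ for every $x$. This holds also when $\mathcal Mu(x)=-\infty$, in which case $K_u(x)=\mathcal E_x$. Moreover, $K_u(x)$ is closed in $\mathcal E_x$, since it is the superlevel set $\{u\ge \mathcal Mu(x)\}$ of an upper semicontinuous function, and hence it is compact.

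Next, the graph. Choose a Borel function $g:X\to[-\infty,\infty)$ with $g=\mathcal Mu$ $\mu$-a.e.; this is possible because $\mathcal Mu$ is $\mu$-measurable. Then
\[
G:=\{(x,y)\in\mathcal E:\ u(x,y)\ge g(x)\}
\]
is a Borel subset of $X\times Y$. For $\mu$-a.e.\ $x$, its section $G_x$ equals $K_u(x)$, because $u\le\mathcal Mu$ on the fiber. Let $X_0$ be a Borel set of full $\mu$-measure on which $g=\mathcal Mu$; on $X_0$, the sections $G_x$ are nonempty. Now apply the Jankov--von Neumann uniformization theorem, or equivalently Aumann's theorem \cite[Cor.~18.27]{AliprantisBorder}, to the Borel set $G\cap(X_0\times Y)$. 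This yields a map $s_0:\mathrm{pr}_X(G\cap (X_0\times Y))=X_0\to Y$ that is universally measurable (in particular $\mu$-measurable) and satisfies $(x,s_0(x))\in G$. Since $\mu$-measurable maps into the Polish space $Y$ agree $\mu$-a.e.\ with Borel maps, we replace $s_0$ by a Borel map $s_1$ on a smaller full-measure Borel set $X_1$. Off $X_1$, we define $s$ using any fixed Borel selection of $x\mapsto\mathcal E_x$. Such a selection exists by the same uniformization theorem applied to $\mathcal E$, again modulo a null set; alternatively, since only a.e.\ membership is claimed, any Borel map into $\hat X$ will do there. Composing with $\iota^{-1}$ gives $s:X\to\hat X$ with $s(x)\in K_u(x)$ for $\mu$-a.e.\ $x$.

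The main obstacle is measurability. Neither $\mathcal Mu$ nor the argmax correspondence need be Borel, so we must work with $\mu$-completions throughout and make sure every modification is made only on $\mu$-null sets. In particular, it must be checked that the uniformizing selector of an analytic or Borel set is $\mu$-measurable; this follows from the fact that $\sigma(\Sigma^1_1)$-measurable maps are universally measurable. It must also be checked that replacing it by a Borel version keeps it inside $G$ almost everywhere, which holds because the two maps differ only on a $\mu$-null set. Compactness of the fibers and upper semicontinuity are used only to guarantee that the sections are nonempty.
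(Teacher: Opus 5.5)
Your proposal is correct and follows essentially the paper's route: identify $\hat X$ with $\mathcal E\subset X\times Y$, observe that the argmax sections are nonempty by compactness and upper semicontinuity, and apply Aumann's measurable selection theorem relative to the $\mu$-completion. The only differences are technical refinements the paper leaves implicit. You replace $\mathcal M u$ by a Borel version $g$, so the graph $G$ is genuinely Borel rather than merely in $\Sigma_\mu\otimes\mathscr B$. You also modify the universally measurable selector on a $\mu$-null set to get a Borel map $s$.
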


\begin{proof}
    We identify $\hat X$ with the Borel subset $\mathcal{E} \subseteq X \times Y$ via the isomorphism $\iota$. The projection $\pi$ corresponds to the continuous projection $\mathrm{pr}_X: X \times Y \to X$. Now, 
    following the terminology of~\cite[\S1.3]{AliprantisBorder},
    we consider the correspondence $\mathcal{K}_u: X \twoheadrightarrow Y$  obtained by viewing \(K_u\) as a subset of \(\mathcal E\).  That is, the assignment to each $x \in X$~the~set 
    $$\mathcal{K}_u(x):=\{ y\in Y: (x,y) \in \iota(K_u(x))\}.$$ 
    Its graph is 
    \[
    \mathrm{Gr}(\mathcal{K}_u) :=\{(x, y) \in X \times Y : y \in \mathcal{K}_u(x)\} = 
    \{(x, y) \in \mathcal{E} : u(\iota^{-1}(x, y)) =(\mathcal M u)(x)\}.
    \]
    Since $u$ is Borel and $\mathcal M u$ is $\mu$-measurable, this graph belongs to the product $\sigma$-algebra $\Sigma_\mu \otimes \mathscr B$. Since the fibers are compact and $u$ is fiberwise upper semicontinuous, $\mathcal{K}_u(x)$ is nonempty for every $x$. We apply the Aumann Measurable Selection Theorem~{\cite[Cor.~18.27]{AliprantisBorder}}. This result ensures the  existence of a measurable function $\tilde{s}: X \to Y$ such that $\tilde{s}(x) \in \mathcal{K}_u(x)$ for $\mu$-a.e.~$x\in X$. The section $s(x) = \iota^{-1}(x, \tilde{s}(x))$ satisfies the requirement.
\end{proof}

\subsection{Proof of Theorem~\ref{mainthm:A}} 
If $(\hat\phi_n)_{n\ge1}$ is $\hat P$-additive, i.e.
$\hat\phi_{n+m}=\hat\phi_n+\hat P^n\hat\phi_m$, then by subadditivity of $\mathcal{M}$,
\[
   \phi_{n+m}=\mathcal{M}(\hat\phi_n+\hat P^n\hat\phi_m)\le \mathcal{M}\hat\phi_n+\mathcal{M}(\hat P^n\hat\phi_m)
\]
Applied Lemma~\ref{lem:propiedades-M}} to $u=\hat\phi_m$,
\[
   \mathcal{M}(\hat P^n\hat\phi_m)\le P^n(\mathcal{M}\hat\phi_m)=P^n\phi_m.
\]
Thus $\phi_{n+m}\le \phi_n+P^n\phi_m$ for all $n,m\ge1$, so $(\phi_n)_{n\ge1}$ is $P$-subadditive.

\subsection{Proof of Theorem~\ref{thm:VP}}


By Theorem~\ref{mainthm:A}, the sequence $\phi_n=\mathcal M\hat\phi_n$ is $P$-subadditive. In particular,
$\phi_n\le \phi_{n-1}+P^{n-1}\phi_1\le \sum_{j=0}^{n-1}P^j\phi_1$.
Taking positive parts and using positivity of $P$, we get
\[
\phi_n^+\le \sum_{j=0}^{n-1}(P^j\phi_1)^+\le \sum_{j=0}^{n-1}P^j(\phi_1^+).
\]
Since $\phi_1^+\in L^1(\mu)$ and $\mu\in\mathcal I(P)$, it follows that
\[
\int \phi_n^+\,d\mu \le \sum_{j=0}^{n-1}\int P^j(\phi_1^+)\,d\mu
= n\int \phi_1^+\,d\mu<\infty,
\]
hence $a_n:=\int \phi_n\,d\mu\in[-\infty,\infty)$ for all $n$. Moreover, integrating the $P$-subadditivity of $\phi_n$
gives $a_{n+m}\le a_n+a_m$. Therefore, by Fekete's lemma,
\[
\Lambda(\mu):=\inf_{n\ge1}\frac1n a_n=\lim_{n\to\infty}\frac1n a_n\in[-\infty,\infty).
\]


First we show that $\Lambda(\mu)$ is an  upper bound of the integrals $\int \hat \phi_1 \, d\hat \mu$ with $\hat \mu \in \mathcal I_\mu(\hat P)$. To do this, fix $\hat \mu \in \mathcal{I}_\mu(\hat{P})$. By the definition of $\mathcal{M}$, we have $\hat{\phi}_n \le \phi_n \circ \pi$ pointwise. Integrating with respect to $\hat \mu$ and using the marginal condition $\pi_* \hat \mu = \mu$, we get
\[
\int \hat{\phi}_n \, d\hat \mu \le \int \phi_n \circ \pi \, d\hat \mu = \int \phi_n \, d\mu.
\]
By the $\hat{P}$-additivity of $(\hat\phi_n)_{n\geq 1}$ and since  $\hat \mu\in \mathcal I(\hat P)$, we have $\int \hat{\phi}_n \, d\hat \mu = n \int \hat{\phi}_1 \, d\hat \mu$. Dividing by $n$ and letting $n \to \infty$ yields
\[
\int \hat{\phi}_1 \, d\hat \mu \le \Lambda(\mu).
\]
This concludes the required upper bound.

Now, we construct a maximizing measure, that is, a measure $\hat\nu \in \mathcal I_\mu(\hat P)$ such that $\Lambda(\mu) \leq  \int \hat \phi_1 \, d\hat \nu$. 
Fix $n \ge 1$. By Lemma~\ref{lem:measurable-max}, there is a measurable section $s_n: X \to \hat{X}$ such that $s_n(x) \in \pi^{-1}(x)$ and $\hat{\phi}_n(s_n(x)) = \phi_n(x)$ for $\mu$-a.e.~$x$. Define the lift measure and the Cesàro average
\[
\eta_n := (s_n)_* \mu \in \Prob_\mu(\hat{X}) \quad \text{and}\quad 
\hat{\nu}_n \coloneqq \frac{1}{n} \sum_{k=0}^{n-1} (\hat{P}^*)^k \eta_n.
\]
Note that $\hat{\nu}_n \in \Prob_\mu(\hat{X})$ because  $P^*\mu = \mu$ and $\pi_* \circ \hat P^* = P^* \circ \pi_*$ by Lemma~\ref{lem:markov-factorization}. Moreover, 
\[
\int \hat{\phi}_1 \, d\hat{\nu}_n = \frac{1}{n}  \int \sum_{k=0}^{n-1} \hat{P}^k \hat{\phi}_1 \, d\eta_n = \frac{1}{n} \int \hat{\phi}_n \, d\eta_n = \frac{1}{n} \int \hat{\phi}_n \circ s_n \, d\mu = \frac{1}{n} \int \phi_n \, d\mu.
\]
Taking the limit, we arrive at
\[
\lim_{n \to \infty} \int \hat{\phi}_1 \, d\hat{\nu}_n = \Lambda(\mu).
\]
By Proposition~\ref{prop:compact-metrizable}, $\Prob_\mu(\hat{X})$ is compact in the Young-measure topology. Let $(\hat{\nu}_{n_j})_{j\geq 1}$ be a convergent subsequence to $\hat\nu \in \Prob_\mu(\hat X)$. Since $\hat{\phi}_1$ is fiberwise upper semicontinuous function\ and bounded from above by $ \phi_1\circ\pi \le g\circ\pi$ with $g=\phi_1^+\in L^1(\mu)$, Proposition~\ref{lem:semicontinuity} implies
\begin{equation}\label{eq:energy-usc}
\Lambda(\mu) = \lim_{j \to \infty} \int \hat{\phi}_1 \, d\hat{\nu}_{n_j} \le \int \hat{\phi}_1 \, d\hat{\nu}.
\end{equation}
This establishes attainment, provided $\hat{\nu} \in \mathcal{I}(\hat P)$. 
If $\Lambda(\mu) = -\infty$,  any invariant measure satisfies the inequality trivially. If such an invariant measure does not exist, then by convention 
\[
\sup_{\hat\mu\in\mathcal I_\mu(\hat P)}
\int\hat\phi_1\,d\hat\mu
=
\max\left\{
\int\hat\phi_1\,d\hat\mu:
\hat\mu\in\mathcal I_\mu(\hat P)
\text{ ergodic}
\right\}
= 
-\infty
\]
and the inequality still holds.  Assume $\Lambda(\mu) > -\infty$. Then \eqref{eq:energy-usc} implies $\int \hat{\phi}_1 \, d\hat{\nu} > -\infty$. Condition (ii) ($\hat{\phi}_1|_K \equiv -\infty$) forces $\hat{\nu}(K) = 0$. To prove that $\hat \nu \in \mathcal{I}_\mu(\hat P)$ in this case, we show that $\int \hat P\psi \, d\hat \nu = \int \psi \, d\hat \nu$ for every $\psi \in \mathscr{C}_b(\hat X)$.   

First, consider the term $\int \psi \, d\hat{\nu}_{n_j}$. Since $\psi$ is a bounded Carathéodory function, the convergence $\hat{\nu}_{n_j} \to \hat{\nu}$ in the Young-measure topology implies by definition that
\begin{equation} \label{eq:psij}
        \lim_{j \to \infty} \int \psi \, d\hat{\nu}_{n_j} = \int \psi \, d\hat{\nu}.
\end{equation}
On the other hand, by Condition (i), every fiberwise discontinuity point of the function $\hat{P}\psi$ belongs to $K$.
Since $\hat{\nu}(K)=0$, we have that $\hat P \psi\in B(\hat X)$ is fiberwise continuous outside of $\hat\nu$-null set. Hence, since $\hat\nu_{n_j} \to \hat \nu$ in the Young-measure topology, by Proposition~\ref{lem:ae-continuity}, 
\begin{equation} \label{eq:Phij}
        \lim_{j \to \infty} \int \hat{P}\psi \, d\hat{\nu}_{n_j} = \int \hat{P}\psi \, d\hat{\nu}.
\end{equation}
To conclude, let us prove that 
\begin{equation} \label{eq:lim0}
    \lim_{j \to \infty} \int (\hat{P}\psi - \psi) \, d\hat{\nu}_{n_j} = 0.
\end{equation}
Combining this with~\eqref{eq:psij} and~\eqref{eq:Phij}, we conclude $\int (\hat{P}\psi - \psi) \, d\hat{\nu} = 0$ as required.

To prove~\eqref{eq:lim0}, by the definition of $\hat{\nu}_n$ and the duality, we have
\begin{align*}
    \int (\hat{P}\psi - \psi) \, d\hat{\nu}_n 
    &= \frac{1}{n} \sum_{k=0}^{n-1} \left( \int \hat{P}^{k+1}\psi \, d\eta_n - \int \hat{P}^k\psi \, d\eta_n \right) \\
    &= \frac{1}{n} \left( \int \hat{P}^n\psi \, d\eta_n - \int \psi \, d\eta_n \right).
\end{align*}
Since $\psi$ is bounded (say by $C$), the Markov property implies $\|\hat{P}^n \psi\|_\infty \le C$. Therefore,
\[
    \left| \int (\hat{P}\psi - \psi) \, d\hat{\nu}_n \right| \le \frac{2 C}{n}.
\]
 Taking the limit as $n \to \infty$ (and thus along the subsequence $n_j$), we obtain~\eqref{eq:lim0} and conclude the construction of a maximizing measure.  

 \begin{remark}[Maximizers avoid the singular set]\label{rem:maximizer-avoids-K}
Assume Condition~\textup{(ii)}: $\hat\phi_1|_K\equiv-\infty$.
If $\hat\nu\in\Prob_\mu(\hat X)$ satisfies $\int \hat\phi_1\,d\hat\nu>-\infty$, then $\hat\nu(K)=0$.
In particular, if $\Lambda(\mu)>-\infty$, every maximizing lift $\hat\nu\in\mathcal I_\mu(\hat P)$
(i.e.\ $\int \hat\phi_1\,d\hat\nu=\Lambda(\mu)$) satisfies $\hat\nu(K)=0$.
\end{remark}

It remains to prove the assertion concerning ergodic maximizers when
$\Lambda(\mu)>-\infty$. Consider the set of maximizing lifts
\[
\mathscr M_\mu
:=
\left\{
\hat\rho\in\mathcal I_\mu(\hat P):
\int\hat\phi_1\,d\hat\rho=\Lambda(\mu)
\right\}.
\]
This set is nonempty by the preceding construction. It is also convex:
if $\hat\rho_1,\hat\rho_2\in\mathscr M_\mu$ and $a\in[0,1]$, then
$a\hat\rho_1+(1-a)\hat\rho_2$ is an invariant lift of $\mu$ and
\[
\int\hat\phi_1\,d\bigl(a\hat\rho_1+(1-a)\hat\rho_2\bigr)
=
a\Lambda(\mu)+(1-a)\Lambda(\mu)
=
\Lambda(\mu).
\]

We claim that $\mathscr M_\mu$ is compact in the Young-measure
topology. Let $\hat\rho_j\in\mathscr M_\mu$ and suppose that
$\hat\rho_j\to\hat\rho\in\operatorname{Prob}_\mu(\hat X)$. By
Proposition~\ref{lem:semicontinuity},
\[
\Lambda(\mu)
=
\limsup_{j\to\infty}\int\hat\phi_1\,d\hat\rho_j
\leq
\int\hat\phi_1\,d\hat\rho.
\]
In particular, $\int\hat\phi_1\,d\hat\rho>-\infty$, so
$\hat\rho(K)=0$. Conditions~{\rm(i)}--{\rm(ii)} and
Proposition~\ref{lem:ae-continuity} then allow invariance to pass to
the limit, and hence $\hat\rho\in\mathcal I_\mu(\hat P)$. The upper
bound already proved for invariant lifts gives $\int\hat\phi_1\,d\hat\rho\leq\Lambda(\mu)$. 
Thus equality holds and $\hat\rho\in\mathscr M_\mu$. Therefore
$\mathscr M_\mu$ is closed in the compact space
$\operatorname{Prob}_\mu(\hat X)$, and hence is compact.

We have then that $\mathscr M_\mu$ has an extreme point
$\hat\rho_*$. We show that $\hat\rho_*$ is also extreme in
$\mathcal I_\mu(\hat P)$. Suppose that
$
\hat\rho_*
=
a\hat\nu_1+(1-a)\hat\nu_2$ with
$0<a<1$, where $\hat\nu_1,\hat\nu_2\in\mathcal I_\mu(\hat P)$. Since every
invariant lift satisfies $
\int\hat\phi_1\,d\hat\nu_i\leq\Lambda(\mu)$ for $i=1,2$, 
and $
\Lambda(\mu)
=
a\int\hat\phi_1\,d\hat\nu_1
+
(1-a)\int\hat\phi_1\,d\hat\nu_2$,
both integrals must equal $\Lambda(\mu)$. Hence
$\hat\nu_1,\hat\nu_2\in\mathscr M_\mu$, and the extremality of
$\hat\rho_*$ in $\mathscr M_\mu$ implies
$ \hat\nu_1=\hat\nu_2=\hat\rho_*$.
Thus $\hat\rho_*$ is extreme in $\mathcal I_\mu(\hat P)$, and
therefore ergodic. This completes the proof.

\appendix
\section{Classical pointwise Lyapunov exponents}
\label{ss:lyapunov-spectrum}

We record the classical construction of the Lyapunov spectrum that is used in the paper.  It is the exterior-power argument of Ruelle~\cite{ruelle1979ergodic}, with Kingman's theorem written for the Koopman Markov operator.  No projective compactification is needed here: singular bundle morphisms are handled simply by the convention \(\log 0=-\infty\).

Let \(f\) be a measure-preserving transformation of a standard probability space \((X,\mathscr B,\mu)\), let \(\pi:\mathcal E\to X\) be a measurable rank-\(d\) Euclidean vector bundle, and let \(F:\mathcal E\to\mathcal E\) be a linear bundle morphism covering \(f\).  Write
\[
F_x^n=F_{f^{n-1}(x)}\circ\cdots\circ F_x,
\qquad n\geq1,
\]
and assume
\begin{equation}
\label{eq:appendix-integrability}
\int \log^+\|F_x\|\,d\mu(x)<\infty.
\end{equation}

For \(k=1,\ldots,d\), set
\[
\phi_{k,n}(x):=\log\|\wedge^kF_x^n\|\in[-\infty,\infty),
\qquad n\geq1.
\]
If \(P\varphi=\varphi\circ f\) is the Koopman Markov operator, then
$\phi_{k,n+m}\leq \phi_{k,n}+P^n\phi_{k,m}$, because \(F_x^{n+m}=F_{f^n(x)}^m\circ F_x^n\).  Moreover,
\[
(\phi_{k,1})^+
=\log^+\|\wedge^kF_x\|
\leq k\log^+\|F_x\|,
\]
so the positive part is integrable.  Kingman's subadditive theorem for Markov operators~\cite[Theorem~D]{BM} therefore gives an \(f\)-invariant measurable function \(\Sigma_k:X\to[-\infty,\infty)\) such that
\begin{equation}
\label{eq:appendix-top-sums}
\Sigma_k(x)
=
\lim_{n\to\infty}\frac1n\log\|\wedge^kF_x^n\|
\qquad\text{for \(\mu\)-a.e. \(x\)},
\end{equation}
and
\begin{equation}
\label{eq:appendix-integrated-top-sums}
\Sigma_k(\mu):=\int\Sigma_k\,d\mu
=
\lim_{n\to\infty}\frac1n\int\log\|\wedge^kF_x^n\|\,d\mu
=
\inf_{n\geq1}\frac1n\int\log\|\wedge^kF_x^n\|\,d\mu.
\end{equation}
If \(\mu\) is ergodic, then \(\Sigma_k(x)=\Sigma_k(\mu)\) for \(\mu\)-a.e. \(x\).

Set \(\Sigma_0(x)=0\).  The pointwise Lyapunov exponents are defined by
\begin{equation}
\label{eq:def-lyapunov-sigma}
\lambda_k(x):=
\begin{cases}
\Sigma_k(x)-\Sigma_{k-1}(x),&\text{if \(\Sigma_{k-1}(x)>-\infty\)},\\
-\infty,&\text{if \(\Sigma_{k-1}(x)=-\infty\)},
\end{cases}
\qquad k=1,\ldots,d.
\end{equation}
Then
\begin{equation}
\label{eq:appendix-sum-spectrum}
\Sigma_k(x)=\sum_{i=1}^k\lambda_i(x)
\qquad\text{for \(\mu\)-a.e. \(x\)}.
\end{equation}

To identify these quantities with the usual singular-value exponents, let
\(
\sigma_1(L)\geq\cdots\geq\sigma_d(L)\geq0
\)
denote the singular values of a linear map \(L\).  Since
$\|\wedge^kL\|=\sigma_1(L)\cdots\sigma_k(L)$,
if \(\Sigma_{k-1}(x)>-\infty\), equations~\eqref{eq:appendix-top-sums} and~\eqref{eq:def-lyapunov-sigma} give
\[
\lambda_k(x)
=
\lim_{n\to\infty}\frac1n\log\sigma_k(F_x^n).
\]
If \(k\geq2\) and \(\Sigma_{k-1}(x)=-\infty\), then
\[
\sigma_k(F_x^n)
\leq \sigma_{k-1}(F_x^n)
\leq \|\wedge^{k-1}F_x^n\|^{1/(k-1)}.
\]
Dividing logarithms by \(n\) and using~\eqref{eq:appendix-top-sums} again shows that the same limit is \(-\infty\).  Consequently,
\begin{equation}
\label{eq:appendix-singular-value-spectrum}
\lambda_k(x)
=
\lim_{n\to\infty}\frac1n\log\sigma_k(F_x^n),
\qquad k=1,\ldots,d,
\end{equation}
for \(\mu\)-a.e. \(x\).  In particular,
\[
\infty>\lambda_1(x)\geq\cdots\geq\lambda_d(x)\geq-\infty.
\]

Finally, define
\[
\lambda_k(\mu):=\int\lambda_k(x)\,d\mu(x)\in[-\infty,\infty).
\]
The positive parts are integrable because \(\lambda_k\leq\lambda_1=\Sigma_1\).  Hence finite sums may be integrated in~\eqref{eq:appendix-sum-spectrum}, and
\begin{equation}
\label{eq:appendix-integrated-spectrum}
\Sigma_k(\mu)=\sum_{i=1}^k\lambda_i(\mu).
\end{equation}
Equivalently, if \(\Sigma_{k-1}(\mu)>-\infty\), then
\[
\lambda_k(\mu)=\Sigma_k(\mu)-\Sigma_{k-1}(\mu).
\]
If \(\Sigma_{k-1}(\mu)=-\infty\), then at least one of \(\lambda_1(\mu),\ldots,\lambda_{k-1}(\mu)\) is \(-\infty\); the pointwise ordering implies \(\lambda_k(\mu)=-\infty\).  Thus the growth rates of the exterior powers are precisely the partial sums of the classical Lyapunov spectrum, also for non-invertible or singular bundle morphisms.

\section{The Young-measure topology}
\label{appendix:Young-measure}

{Let $\pi:\hat X\to X$ be a standard Borel bundle with compact metric fibers. Thus $\hat X$ and $X$ are standard Borel spaces, and there exist a compact metric space $Y$, a Borel set $\mathcal E\subset X\times Y$, and a measurable isomorphism
$\iota:\hat X\to \mathcal E$
such that
$\pi=\mathrm{proj}_X\circ \iota$ and, for every $x\in X$, the map $\iota$ sends the fiber $\hat X_x:=\pi^{-1}(x)$ homeomorphically onto the nonempty compact section $\mathcal E_x:=\{y\in Y:(x,y)\in\mathcal E\}$.} 
\begin{rem}{For comparison, many sources introduce such objects intrinsically as \emph{Borel fields of compact metric spaces}~\cite{BorelField}. One assumes that there is a Borel map}
\[
    d:\hat X\times_\pi \hat X \longrightarrow [0,\infty),\qquad 
    \hat X\times_\pi \hat X:=\{(\hat z,\hat z')\in\hat X^2:\ \pi(\hat z)=\pi(\hat z')\},
\]
whose restriction to $\hat X_x\times\hat X_x$ is a compact metric $d_x$ generating the fiber topology, and that
there exists a sequence of Borel sections $s_n:X\to\hat X$ such that $\{s_n(x):n\in\mathbb N\}$ is dense in
$(\hat X_x,d_x)$ for every $x\in X$.
{Under this intrinsic hypothesis, one obtains an embedding $\hat X\hookrightarrow X\times[0,1]^{\mathbb N}$ by the usual
``distance-to-sections'' coordinates. Conversely, the concrete realization $\hat X\cong\mathcal E\subset X\times Y$
implies the existence of a fundamental sequence of Borel sections by the Castaing representation theorem~\cite[Theorem III.9]{Castaing77}.}
\end{rem}

 We write
\[
    \Prob_\mu(\hat X)
    :=\{\nu\in\Prob(\hat X):\ \pi_*\nu=\mu\}.
\]
We endow this set with the topology obtained by testing against Borel bounded fiberwise continuous observables. 
Concretely, for $\nu\in\Prob_\mu(\hat X)$ 
we say that $\nu_n\to\nu$ in the \emph{Young-measure topology} (or \emph{stable topology}) if
\[
    \int \psi \,d\nu_n \ \longrightarrow\
    \int \psi \,d\nu
    \qquad\text{for every }\psi \in \mathscr{C}_b(\hat X).
\]
Here, since the Borel isomorphism $\iota: \hat X \to \mathcal{E}$ is a fiberwise homeomorphism, we have that   
$\mathscr{C}_b(\hat X)=\iota^*(\mathscr{C}_b(X\times Y))$ where $\mathscr{C}_b(X\times Y)$ denotes the set of measurable functions $f:X \times Y \to \mathbb{R}$ that are bounded and the section $f(x,\cdot) :Y\to \mathbb{R}  $ continuous for every $x\in X$. These functions are called in the literature \emph{bounded Carath\'eodory integrands}~\cite[Def.~3.7]{florescu2012young}. This topology corresponds with the stable topologies on Young measures~\cite[pg.~20-21]{castaing2004young} which are all of them equivalent in our context~\cite[Rem.~2.1.1]{castaing2004young}.

\begin{proposition}\label{prop:compact-metrizable}
The space \(\Prob_\mu(\hat X)\), endowed with the Young-measure topology, is compact and metrizable.
\end{proposition}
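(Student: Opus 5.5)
We will reduce everything to the concrete realization $\hat X\cong\mathcal E\subset X\times Y$ given by $\iota$, so that $\Prob_\mu(\hat X)$ is identified with the set of probability measures $\nu$ on $X\times Y$ with $\nu(\mathcal E)=1$ and $\mathrm{pr}_{X*}\nu=\mu$. The Young-measure topology is then the topology of convergence against bounded Carathéodory integrands on $X\times Y$, by the identity $\mathscr C_b(\hat X)=\iota^*(\mathscr C_b(X\times Y))$ recorded above. We will prove metrizability and compactness separately, and then check that the constraint $\nu(\mathcal E)=1$ survives limits.

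\emph{Metrizability.} We will use that $C(Y)$ is separable, with a countable dense set $\{g_j\}$, and that $L^1(\mu)$ is separable because $X$ is standard Borel, with a countable dense family of bounded Borel functions $\{h_i\}$. We claim the products $h_i(x)g_j(y)$ determine the topology on $\Prob_\mu(\hat X)$. Every bounded Carathéodory $\psi$ can be approximated in $L^1(\mu; C(Y))$ by finite sums $\sum h_i\otimes g_j$. For this we use the standard fact that a Carathéodory integrand is a Bochner-measurable map $X\to C(Y)$, since $C(Y)$ is separable. Since all measures in the space share the marginal $\mu$, we get
\[
\Bigl|\int(\psi-\textstyle\sum h_i g_j)\,d\nu\Bigr|\le \int \|\psi(x,\cdot)-\textstyle\sum h_i(x) g_j(\cdot)\|_\infty\,d\mu
\]
uniformly in $\nu$. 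This gives a metric $d(\nu,\nu')=\sum_{i,j}2^{-i-j}\,|\int h_ig_j\,d(\nu-\nu')|/(\|h_i\|_\infty\|g_j\|_\infty)$ generating the topology.

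\emph{Compactness.} Given a sequence $\nu_n$, the disintegrations $x\mapsto\nu_{n,x}\in\Prob(Y)$ define elements of the unit ball of $L^\infty_{w^*}(\mu;M(Y))=(L^1(\mu;C(Y)))^*$. By Banach--Alaoglu, and since the predual is separable, the ball is weak$^*$ sequentially compact, so a subsequence converges to some $x\mapsto\nu_x$. Positivity and total mass one pass to the limit by testing against $h\otimes 1$ with $h\ge0$, so $\nu:=\int\nu_x\,d\mu$ is a probability with marginal $\mu$. Convergence against all bounded Carathéodory integrands follows, because these embed in $L^1(\mu;C(Y))$.

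The main obstacle is showing $\nu(\mathcal E)=1$, that is, $\nu_x(\mathcal E_x)=1$ for $\mu$-a.e. $x$. We plan to use the closedness of the sections. We take $\psi(x,y)=\min\{1,\mathrm{dist}(y,\mathcal E_x)\}$, which is continuous in $y$. It is Borel in $(x,y)$ by the Castaing representation: $\mathrm{dist}(y,\mathcal E_x)=\inf_n d(y,\tilde s_n(x))$ for a countable family of Borel sections dense in each $\mathcal E_x$. Since $\psi\ge0$ and $\int\psi\,d\nu_n=0$ for all $n$, we get $\int\psi\,d\nu=0$. Hence $\nu_x$ is supported on $\{\mathrm{dist}(\cdot,\mathcal E_x)=0\}=\mathcal E_x$ for a.e. $x$. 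Pulling back by $\iota^{-1}$ yields the limit in $\Prob_\mu(\hat X)$. Sequential compactness plus metrizability gives compactness.
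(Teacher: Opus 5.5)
Your proof is correct, but it takes a different route from the paper's.

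\textbf{The paper's route.} The paper argues essentially by citation. Compactness and metrizability of $\Prob_\mu(X\times Y)$ in the stable topology are taken from Florescu and Godet-Thobie. The only step actually proved is that $\Prob_\mu(\mathcal E)$ is closed. This uses the Portmanteau theorem for upper semicontinuous integrands, applied to $1_{\mathcal E}$, which is fiberwise upper semicontinuous because the sections $\mathcal E_x$ are closed. That gives $1=\limsup_n\eta_n(\mathcal E)\le\eta(\mathcal E)$.

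\textbf{Your route.} You prove compactness and metrizability from scratch. You embed bounded Carath\'eodory integrands in $L^1(\mu;C(Y))$, use the duality $L^1(\mu;C(Y))^*=L^\infty_{w^*}(\mu;M(Y))$, and apply Banach--Alaoglu with a separable predual. This is essentially how the cited result is proved, and it yields an explicit metric. For closedness you test against the single Carath\'eodory integrand $\min\{1,\mathrm{dist}(y,\mathcal E_x)\}$. That uses only the definition of the topology, so no Portmanteau theorem is needed.

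\textbf{The trade-off.} The price of your route is measurability. The paper's indicator $1_{\mathcal E}$ is Borel for free. Your distance function needs a Castaing representation of $x\mapsto\mathcal E_x$ by Borel sections; the paper asserts exactly this in its remark on Borel fields. Alternatively, Novikov's theorem gives that $x\mapsto\mathcal E_x\in K(Y)$ is Borel. A third option is $\mu$-measurable selections followed by a $\mu$-a.e.\ Borel modification, which suffices because all measures involved have marginal $\mu$.

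\textbf{Two small remarks.}
\begin{enumerate}
\item Testing against $h\otimes 1$ only gives $\nu_x(Y)=1$ for $\mu$-a.e.\ $x$. Positivity then follows from the unit-ball bound $\|\nu_x\|\le 1$, since together these force $\nu_x^-=0$; alternatively, test against $h\otimes g$ with $g\ge 0$. You should say which argument you intend.
\item Both proofs rely equally on the identification $\mathscr C_b(\hat X)=\iota^*(\mathscr C_b(X\times Y))$, which neither proves. This is a measurable extension of fiberwise continuous functions from $\mathcal E$ to $X\times Y$.
\end{enumerate}
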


\begin{proof}
We argue by reduction to the trivial product $X\times Y$.
According to~\cite[Prop.~3.25 and Thm.~3.58]{florescu2012young}, since $X$ is standard Borel (hence it has countably generated $\sigma$-algebra) and $Y$ is compact metric (hence compact metrizable Suslin space), $\Prob_\mu(X\times Y)$ is a compact metrizable space with the Young-measure topology. 

Let
$\Prob_\mu(\mathcal E):=\{\eta\in\Prob_\mu(X\times Y): \eta(\mathcal E)=1\}$. 
We claim that $\Prob_\mu(\mathcal E)$ is closed in $\Prob_\mu(X\times Y)$ for the Young-measure topology.
Indeed, the indicator $1_{\mathcal E}(x,\cdot)$ is upper semicontinuous on $Y$ for each $x$
(because $\mathcal E_x$ is compact, hence closed), and $(x,y)\mapsto   1_{\mathcal E}(x,y)$ is measurable.
Thus $  1_{\mathcal E}$ is a bounded upper semicontinuous integrand.  If $\eta_n\to\eta$ stably and $\eta_n(\mathcal E)=1$ for all $n$, then the Portmanteau theorem~\cite[Thm.~2.1.3 (item 3)]{castaing2004young}
yields
\[
    1=\limsup_{n\to\infty}\eta_n(\mathcal E)
    =\limsup_{n\to\infty}\int   1_{\mathcal E}\,d\eta_n
    \le \int   1_{\mathcal E}\,d\eta
    =\eta(\mathcal E),
\]
hence $\eta(\mathcal E)=1$, proving the claim. In particular, $\Prob_\mu(\mathcal E)$ is compact. 

Finally, we transfer these results from the trivial bundle $\mathrm{proj}: X\times Y \to X$ to the bundle $\pi:\hat X\to X$. The map $\iota:\hat X \to \mathcal{E}$ is a bimeasurable bijection  with
$\pi=\mathrm{proj}_X\circ\iota$, hence pushforward is a bijection $\iota_*:\Prob_\mu(\hat X)\to \Prob_\mu(\mathcal E)$. By construction, the Young-measure topology on $\Prob_\mu(\hat X)$ is the initial topology for $\iota_*$ viewed as a map into the stable topology on $\Prob_\mu(X\times Y)$; therefore
$\iota_*$ is a homeomorphism of $\Prob_\mu(\hat X)$ onto the closed subspace $\Prob_\mu(\mathcal E)$.
Since $\Prob_\mu(X\times Y)$ is compact metrizable and $\Prob_\mu(\mathcal E)$ is closed,
it follows that $\Prob_\mu(\hat X)$ is compact metrizable as well.
\end{proof}

In the previous proof, we used the characterization of the convergence in the Young-measure topology by means of the upper semicontinuity of the functional 
\[
    \mathcal J_f:\Prob_\mu(X\times Y) \to [-\infty,\infty),
    \qquad \mathcal J_f(\nu):=\int f \,d\nu
\]
for every fiberwise upper semicontinuous function  
$f: X\times Y  \to [-\infty,\infty)$  with $|f(x,y)| \leq g(x)$ for some $ g\in L^1(\mu)$, i.e., \emph{$L^1$-bound upper semicontinuous integrand} in the sense of~\cite{AliprantisBorder,florescu2012young}. That is,   $\nu_n\to\nu$ in the
Young-measure topology, if~and~only~if
\[
    \limsup_{n\to\infty}\int f\,d\nu_n\ \le\ \int f\,d\nu
\]
for every $L^1$-bound upper semicontinuous integrand $f$~\cite[Thm.~2.1.3 (item~3)]{castaing2004young}. 
In our setting, this equivalence still holds for upper semicontinuous integrands dominated from above by an $L^1$ function.

\enlargethispage{1cm}
 \begin{prop}\label{lem:semicontinuity}
Let $u:\hat X\to[-\infty,\infty)$ be  fiberwise upper semicontinuous.
Assume there exists $g\in L^1(\mu)$ such that $u\leq g\circ \pi$. Then the functional
\[
    \mathcal J_u:\Prob_\mu(\hat X)\to[-\infty,\infty),\qquad \mathcal J_u(\nu):=\int u\,d\nu,
\]
 is upper semicontinuous for the Young-measure topology.
\end{prop}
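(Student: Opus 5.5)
The plan is to reduce to the bounded case, where the known characterization of the Young-measure topology applies, and then to pass to the limit by monotone truncation. Transport everything to the concrete realization first: via $\iota$, regard $u$ as a function on $\mathcal E\subset X\times Y$ and extend it to $X\times Y$ by $-\infty$ off $\mathcal E$. Since each section $\mathcal E_x$ is compact, hence closed, this extension is still upper semicontinuous in $y$ for every $x$. Because $\iota_*$ is a homeomorphism onto the closed subspace $\Prob_\mu(\mathcal E)$ (proof of Proposition~\ref{prop:compact-metrizable}), it suffices to prove upper semicontinuity of $\eta\mapsto\int u\,d\eta$ on $\Prob_\mu(X\times Y)$.

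\textbf{Step 1 (truncation from below).} For $N\ge1$ set
\[
u_N:=\max\{u,\,-N-g^-\circ\mathrm{proj}_X\},
\]
or more simply $u_N:=\max\{u,-N\}$ combined with $\min\{\,\cdot\,,g\}$. Each $u_N$ is again fiberwise upper semicontinuous, since the maximum of two usc functions is usc and the added term is constant on fibers, and it satisfies $|u_N|\le |g|+N$, an $L^1(\mu)$ bound in $x$ alone. Hence $u_N$ is an $L^1$-bounded upper semicontinuous integrand, and by the Portmanteau characterization \cite[Thm.~2.1.3]{castaing2004young} recalled above, $\mathcal J_{u_N}$ is upper semicontinuous on $\Prob_\mu(X\times Y)$. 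Note that integrability of $|g|+N$ against any $\eta$ with marginal $\mu$ is automatic, since the bound depends only on $x$. Should the cited result require $|f|\le g$ with $g\ge0$, replace $g$ by $g^+ + N$ beforehand.

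\textbf{Step 2 (monotone limit).} As $N\to\infty$ we have $u_N\downarrow u$ pointwise, and $u_N\le g^+\circ\pi$, which is integrable for every $\eta\in\Prob_\mu$. Monotone convergence for decreasing sequences bounded above by an integrable function gives $\int u_N\,d\eta\downarrow\int u\,d\eta\in[-\infty,\infty)$ for each $\eta$. Therefore $\mathcal J_u=\inf_N \mathcal J_{u_N}$ is an infimum of upper semicontinuous functionals, and hence is itself upper semicontinuous. The value $-\infty$ causes no difficulty, because upper semicontinuity into $[-\infty,\infty)$ is preserved under pointwise infima.

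The main obstacle is the first step: checking that the cited Portmanteau-type result genuinely applies to integrands that are only fiberwise usc and are dominated by $x$-dependent $L^1(\mu)$ bounds, and that this holds on the full product $\Prob_\mu(X\times Y)$ rather than only for continuous integrands. If the reference requires bounded integrands, I would add a further truncation from above, $\min\{u_N,M\}$, and control the error by $\int (g^+-M)^+\,d\mu$, which tends to $0$ uniformly in $\eta$ because all $\eta$ share the marginal $\mu$. This uniformity is what makes the upper truncation harmless.
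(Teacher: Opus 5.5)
Your proposal is correct and follows essentially the paper's argument: truncate $u$ from below to obtain an $L^1$-bounded fiberwise upper semicontinuous integrand, apply the Portmanteau characterization of the Young-measure topology, and pass to the limit by monotone convergence (your ``infimum of upper semicontinuous functionals'' step is the paper's $\limsup\le\inf_M$ step). Two of your details are more careful than the paper's version. First, you extend $u$ by $-\infty$ off $\mathcal E$. Second, your truncation $\max\{u,-N-g^-\circ\mathrm{proj}_X\}$ (or the $\min\{\cdot,g\}$ variant) stays below $g^+$. The paper's bound $-M\le u_M\le g$ fails where $g<-M$, although it is trivially repaired by replacing $g$ with $g^+$.
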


 \begin{proof}
By the definition of the Young-measure topology, it suffices to prove the result on $\Prob_\mu(X\times Y)$. 
Thus, assume that $u: X \times Y \to [-\infty,\infty)$ is an upper semicontinuous integrand dominated from above by $g\in L^1(\mu)$.  Fix $M\ge 1$ and set $u_M:=\max\{u,-M\}$.  For each $x$, $u_M(x,\cdot)$ is upper semicontinuous on $Y$ and $-M\le u_M(x,y)\le g(x)$. In particular, $|u_M(x,y)|\le g(x)+M$ for all $(x,y)$. 
Hence $u_M$ is an $L^{1}$-bounded upper semicontinuous integrand.
Therefore, $\limsup_{n\to\infty}\int u_M\,d\nu_n\ \le\ \int u_M\,d\nu$. Since $u\le u_M$, we have $\int u\,d\nu_n\le \int u_M\,d\nu_n$ for every $n$, and hence
$$
\limsup_{n\to\infty}\int u\,d\nu_n\ \le\ \int u_M\,d\nu.
$$
Now let $M\to\infty$.
Because $u_M\downarrow u$ pointwise and $u_M\le g$, the nonnegative functions $w_M:=g-u_M  \uparrow g-u$
increase monotonically, and by monotone convergence,
\[
\int w_M\,d\nu\ \uparrow\ \int (g-u)\,d\nu\in[0,\infty].
\]
Since $\int g\,d\nu=\int g\,d\mu<\infty$, this implies $\int u_M\,d\nu\downarrow \int u\,d\nu$ (possibly $-\infty$).
Taking $\inf_{M}$ in the previous limsup bound yields
\[
\limsup_{n\to\infty}\int u\,d\nu_n\ \le\ \inf_{M}\int u_M\,d\nu\ =\ \int u\,d\nu. \qedhere
\]
\end{proof}



 The following result is a classical consequence of the Portmanteau theorem in the context of weak convergence of probability measures: convergence holds for any bounded integrand that is continuous outside a set of measure zero. While standard for probability measures, the equivalence between the convergence in Young-measure topology with the convergence of integrals for every bounded integrand that is fiberwise continuous almost everywhere with respect to the limit measure
  is not always explicitly detailed in the literature. Therefore, we include a proof here.

\begin{prop}
\label{lem:ae-continuity}
Let $(\nu_n)$ be a sequence in $\Prob_\mu(\hat X)$ and let $\nu \in \Prob_\mu(\hat X)$. The following are equivalent:
\begin{enumerate}[label=\textup{(\alph*)}]
    \item $\nu_n \to \nu$ in the Young-measure topology.
    \item For every bounded measurable function $f: \hat X \to \mathbb{R}$ such that the set of its fiberwise discontinuity points
    \[
        D_f := \{z \in \hat X :   f|_{\pi^{-1}(\pi(z))}  \text{ is discontinuous at } z\}
    \]
    is a $\nu$-null set, we have
    \[
        \int f \,d\nu_n \longrightarrow \int f \,d\nu.
    \]
\end{enumerate}
\end{prop}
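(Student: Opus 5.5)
The direction (b)$\Rightarrow$(a) is immediate. Every $\psi\in\mathscr C_b(\hat X)$ is bounded, measurable and fiberwise continuous, so $D_\psi=\emptyset$ is trivially $\nu$-null, and (b) then gives exactly the defining convergence of the Young-measure topology. All the work is in (a)$\Rightarrow$(b). There I would copy the classical Portmanteau argument, replacing weak convergence by its stable analogue. By the definition of the topology (via $\iota_*$, as in the proof of Proposition~\ref{prop:compact-metrizable}), I may transfer everything to $\mathcal E\subset X\times Y$ and then to $X\times Y$. This is harmless because $f\circ\iota^{-1}$ can be extended by $0$ off $\mathcal E$: all measures involved live on $\mathcal E$, and its fiberwise discontinuity points on $\mathcal E$ correspond to those of $f$.

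Fix a bounded measurable $f$ with $|f|\le C$ and $\nu(D_f)=0$. I form the fiberwise upper and lower semicontinuous envelopes
\[
f^*(x,y):=\lim_{r\downarrow0}\sup\{f(x,y'):y'\in \mathcal E_x,\ d(y,y')<r\},\qquad f_*:=-(-f)^*.
\]
By construction $f_*\le f\le f^*$, the function $f^*(x,\cdot)$ is u.s.c., $f_*(x,\cdot)$ is l.s.c., and $f_*=f=f^*$ off $D_f$. Granting measurability, Proposition~\ref{lem:semicontinuity} applies to $f^*$ with the constant bound $g=C$, and to $-f_*$. It gives
\[
\limsup_n\int f\,d\nu_n\le\limsup_n\int f^*\,d\nu_n\le\int f^*\,d\nu=\int f\,d\nu,
\]
where the last equality uses $\nu(D_f)=0$. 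Symmetrically, $\liminf_n\int f\,d\nu_n\ge\int f_*\,d\nu=\int f\,d\nu$, and combining the two yields (b).

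The main obstacle is measurability of $f^*$, since a supremum over fiber balls of a Borel function is only universally measurable (as in Lemma~\ref{lem:propiedades-M}). Along the same lines, $D_f=\{f^*\neq f_*\}$ is then only universally measurable, so the hypothesis "$\nu$-null" must be read with respect to the $\nu$-completion. I would handle this as follows. Using a countable base of open sets of $Y$, write the ball-suprema over rational radii as projections of Borel sets, so $f^*$ is analytic-measurable and hence universally measurable. Then replace $f^*$ by a Borel function that agrees with it $\eta$-a.e., where $\eta:=\mu\otimes$(an auxiliary averaging measure); the cleanest choice is $\eta=\nu+\sum_n2^{-n}\nu_n$. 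The difficulty is that this modification may destroy fiberwise upper semicontinuity. So instead I would apply Proposition~\ref{lem:semicontinuity} in the form from \cite{castaing2004young}, which only needs $\mu$-measurable integrands, and use the fact that the Young-measure topology is unchanged when $\mathscr B$ is replaced by its $\mu$-completion; the Portmanteau inequality for u.s.c. integrands holds for such integrands \cite[Thm.~2.1.3]{castaing2004young}. If this citation proves insufficient, the fallback is to approximate $f^*$ from above by a decreasing sequence of bounded fiberwise Lipschitz (hence Carathéodory) functions
\[
h_k(x,y)=\sup_{y'\in\mathcal E_x}\bigl(f(x,y')-k\,d(y,y')\bigr),
\]
each of which is universally measurable, and then apply monotone convergence. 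In that route the same measurability issue arises and is resolved by working with completions throughout.
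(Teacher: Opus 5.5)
Your proposal is correct and follows the paper's proof essentially step for step. You transfer to $\mathcal E\subset X\times Y$, sandwich $f$ between its fiberwise envelopes $f_*\le f\le f^*$ taken relative to $\mathcal E_x$, apply Proposition~\ref{lem:semicontinuity} to $f^*$ and $-f_*$ with a constant bound, and use $\nu(D_f)=0$ to get $\int f_*\,d\nu=\int f\,d\nu=\int f^*\,d\nu$. One remark is slightly off: extending $f\circ\iota^{-1}$ by $0$ can create new discontinuities in $Y$ at boundary points of $\mathcal E_x$, but this is harmless because your envelopes are relative to $\mathcal E_x$.

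Your attention to measurability goes beyond the paper, which simply asserts that the envelopes are universally measurable and then applies the proposition. Your fallback via the fiberwise Lipschitz regularizations $h_k\downarrow f^*$ is a clean way to make that step rigorous: these are Carath\'eodory for the $\mu$-completion, hence agree with Borel Carath\'eodory integrands off $N\times Y$ with $\mu(N)=0$.
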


\begin{proof}
The implication \textup{(b)}\(\Rightarrow\)\textup{(a)} follows directly
from the definition of the Young-measure topology, since $D_f = \emptyset$ for every \(f\in\mathscr C_b(\hat X)\). 

Assume \textup{(a)} and let \(f\) satisfy the hypothesis in
\textup{(b)}. As in the proof of Proposition~\ref{prop:compact-metrizable}, we argue by reduction to the trivial product. Let $\iota: \hat X \to \mathcal{E} \subset X \times Y$ be the Borel isomorphism satisfying $\pi = \mathrm{proj}_X \circ \iota$.  Define $\tilde{f}: \mathcal{E} \to \mathbb{R}$ by $\tilde{f} = f \circ \iota^{-1}$. For \((x,y)\in\mathcal E\), define the lower and upper envelopes relative to the compact section \(\mathcal E_x\) by 
\begin{align*}
\tilde{f}_*(x,y) := \lim_{r \downarrow 0} \inf_{\substack{z\in \mathcal{E}_z \\ d(z,y) < r}} \tilde{f}(x,z), 
    \qquad 
    \tilde{f}^*(x,y) := \lim_{r \downarrow 0} \sup_{\substack{z\in \mathcal{E}_z \\ d(z,y) < r}}  \tilde{f}(x,z).
\end{align*}
The envelopes are bounded and universally measurable.
Moreover, \(\tilde{f}_*(x,\cdot)\) is lower semicontinuous and
\(\tilde{f}^*(x,\cdot)\) is upper semicontinuous on \(\mathcal E_x\). Pulling
them back through the fiberwise homeomorphism \(\iota\), we obtain bounded universally measurable
functions \(f_*,f^*:\hat X\to\mathbb R\) that are respectively
fiberwise lower and upper semicontinuous and satisfy
$f_*\le f\le f^*$. 

Set $M:=\|f\|_\infty$. By construction, $-M\le f_*\le f\le f^*\le M$. In particular, both $f^*$ and $-f_*$ are bounded above by
$g\circ\pi$, where $g\equiv M\in L^1(\mu)$, since $\mu$ is a
probability measure. Proposition~\ref{lem:semicontinuity}, applied
first to $f^*$ and then to $-f_*$, therefore yields 
\[
\limsup_{n\to\infty}\int f^*\,d\nu_n
\le
\int f^*\,d\nu
\quad \text{and} \quad 
\liminf_{n\to\infty}\int f_*\,d\nu_n
\ge
\int f_*\,d\nu.
\]
At every fiberwise continuity point of \(f\), one has
\(f_*=f=f^*\). Since \(\nu(D_f)=0\),
\[
\int f_*\,d\nu
=
\int f\,d\nu
=
\int f^*\,d\nu.
\]
Therefore,
\[
\int f\,d\nu
\le
\liminf_{n\to\infty}\int f\,d\nu_n
\le
\limsup_{n\to\infty}\int f\,d\nu_n
\le
\int f\,d\nu,
\]
which proves the desired convergence.
\end{proof}


\section{A uniform ergodic-optimization consequence}
\label{appendix-A}

{Assume that \(X\) and \(\hat X\) are compact metric spaces and that \(\pi:\hat X\to X\) is a continuous surjection. In this setting, the hypotheses of Theorem~\ref{thm:VP} are automatically satisfied for Markov operators on \(C(X)\) and \(C(\hat X)\). Indeed, if \(Z\) is compact metric and \(Q:C(Z)\to C(Z)\) is a Markov operator, then \(Q\) admits a transition-kernel representation and therefore extends canonically to a \(\sigma\)-Markov operator on \(B(Z)\); see \cite[Thm.~1.6 and Thm.~1.7]{foguel1973ergodic}. Applying this to \(P:C(X)\to C(X)\) and \(\hat P:C(\hat X)\to C(\hat X)\), the intertwining identity \(\hat P\circ\pi^*=\pi^*\circ P\) extends from \(C(X)\) to \(B(X)\), so hypotheses {\rm(i)}--{\rm(ii)} of Theorem~\ref{thm:VP} hold with \(K=\emptyset\). Moreover, continuity of \(\pi\) is equivalent to \emph{upper hemicontinuity} of the set-valued map \(x\mapsto \pi^{-1}(x)\). That is, for every $x \in X$ and every open set $V \subset \hat{X}$ containing the fiber $\pi^{-1}(x)$, there exists a neighborhood $U$ of $x$ such that $\pi^{-1}(z) \subset V$ for all $z \in U$. Hence, by Berge's Maximum Theorem~\cite[Thm.~2, Chapter~VI, \S3]{Berge63}, it follows that \(\mathcal M\) sends upper semicontinuous functions on \(\hat X\) to upper semicontinuous functions on \(X\). Combining Theorem~\ref{thm:VP} with the uniform Kingman theorem for Markov operators \cite[Thm.~C]{BM}, we obtain the following corollary.}

\begin{corollary}
\label{thm:Uniform-Limit-Topological}
Let \(\pi:\hat X\to X\) be a continuous surjection between compact metric spaces. 
Let \(P:C(X)\to C(X)\) and \(\hat P:C(\hat X)\to C(\hat X)\) be Markov operators satisfying \(\hat P\circ \pi^*=\pi^*\circ P\). 
Given an upper semicontinuous potential \(\hat\phi_1:\hat X\to[-\infty,\infty)\), define
\[
\hat\phi_n:=\sum_{j=0}^{n-1}\hat P^{\,j}\hat\phi_1,
\qquad
\phi_n:=\mathcal M\hat\phi_n.
\]
Then, for every \(\mu\in\mathcal I(P)\), the conclusions of Theorem~\ref{thm:VP} apply, and each \(\phi_n\) is upper semicontinuous.
Moreover,
\[
\Lambda := \sup_{\mu \in \mathcal I(P)} \Lambda(\mu)
=
\max \left\{\int \hat\phi_1 \, d\hat{\mu} : \hat{\mu} \in \mathcal{I}(\hat{P}) \ \text{ergodic} \right\},
\]
and the following properties hold,
\begin{enumerate}
    \item {\emph{Uniform growth rate:}}
    \begin{align*}
        \Lambda &= \lim_{n \to \infty} \max_{{x} \in {X}} \frac{1}{n}  \phi_n({x}) = \inf_{n \geq 1} \max_{{x} \in {X}} \frac{1}{n}  \phi_n({x}).
    \end{align*}
    \item {\emph{Pointwise growth rates:}}
    \[
        \Lambda = \sup_{x \in X} \limsup_{n \to \infty} \frac{1}{n} \phi_n(x)
        = \sup_{\hat{x} \in \hat{X}} \limsup_{n \to \infty} \frac{1}{n} \hat\phi_n(\hat{x}).
    \]
\end{enumerate}    
\end{corollary}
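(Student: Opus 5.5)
The plan is to apply Theorem~\ref{thm:VP} for each $\mu\in\mathcal I(P)$, use Berge's theorem to get upper semicontinuity of each $\phi_n$, take the supremum over $\mu$ to get the global variational formula, and finish with the uniform Kingman theorem \cite[Thm.~C]{BM}.

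\emph{Reduction to Theorem~\ref{thm:VP}.} As explained before the statement, $P$ and $\hat P$ extend to $\sigma$-Markov operators on $B(X)$ and $B(\hat X)$ via their kernels, and the intertwining extends. Since $\hat P$ maps $C(\hat X)$ to $C(\hat X)$, hypotheses (i)--(ii) hold with $K=\emptyset$. The bundle $\pi:\hat X\to X$ is a standard Borel bundle with compact metric fibers: embed $\hat X$ into $X\times\hat X$ via $z\mapsto(\pi(z),z)$.

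\emph{Upper semicontinuity of $\phi_n$.} First, $\hat\phi_n$ is upper semicontinuous on $\hat X$. Indeed, $\hat P$ sends u.s.c.\ functions bounded above to u.s.c.\ functions: write such a function as a decreasing limit of continuous functions and use $\sigma$-order continuity (monotone convergence for kernels). Then $\phi_1=\mathcal M\hat\phi_1$ is u.s.c.\ by Berge's theorem, since $x\mapsto\pi^{-1}(x)$ is upper hemicontinuous with compact values. In particular $\phi_1$ is bounded above on compact $X$, so $\phi_1^+\in L^1(\mu)$ for every $\mu$, and Theorem~\ref{thm:VP} applies to every $\mu\in\mathcal I(P)$. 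The same argument gives that each $\phi_n$ is u.s.c.

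\emph{Global variational formula.} Every $\hat\mu\in\mathcal I(\hat P)$ lies in $\mathcal I_{\pi_*\hat\mu}(\hat P)$ by Lemma~\ref{lem:markov-factorization}, so
\[
\Lambda=\sup_{\hat\mu\in\mathcal I(\hat P)}\int\hat\phi_1\,d\hat\mu .
\]
Attainment follows from weak$^*$ compactness of $\mathcal I(\hat P)$ and upper semicontinuity of $\hat\mu\mapsto\int\hat\phi_1\,d\hat\mu$. Invariance passes to limits because $\hat P$ preserves $C(\hat X)$. The maximizing set is then a nonempty compact face of $\mathcal I(\hat P)$, and an extreme point of it is ergodic, exactly as in the proof of Theorem~\ref{thm:VP}. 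If $\Lambda=-\infty$, the formula holds trivially.

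\emph{Uniform and pointwise rates.} $(\phi_n)$ is a $P$-subadditive sequence of u.s.c.\ functions bounded above, by Theorem~\ref{mainthm:A}. Applying \cite[Thm.~C]{BM} to it should give
\[
\lim_n\max_X\tfrac1n\phi_n=\inf_n\max_X\tfrac1n\phi_n=\sup_{\mu\in\mathcal I(P)}\Lambda(\mu)=\Lambda
\]
and $\sup_x\limsup_n\frac1n\phi_n(x)=\Lambda$. Here u.s.c.\ ensures the maxima are attained.

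For the $\hat X$ statement, the upper bound comes from $\hat\phi_n\le\phi_n\circ\pi$. For the lower bound, apply \cite[Thm.~C]{BM} (or Kingman on an ergodic maximizer $\hat\mu$) to the additive sequence $(\hat\phi_n)$ on $\hat X$, whose uniform rate is $\sup_{\hat\mu}\int\hat\phi_1\,d\hat\mu=\Lambda$. At $\hat\mu$-typical points, $\frac1n\hat\phi_n\to\int\hat\phi_1\,d\hat\mu$ by the ergodic theorem for Markov operators.

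\emph{Expected obstacle.} The main difficulty is checking the hypotheses of \cite[Thm.~C]{BM} for u.s.c.\ rather than continuous data, together with the case $\Lambda=-\infty$. This is where the u.s.c.\ preservation by $\hat P$ and $\mathcal M$ is essential. If Thm.~C needs continuity, I would approximate $\hat\phi_1$ from above by continuous functions and use monotone limits.
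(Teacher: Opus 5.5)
Your proposal is correct and follows essentially the paper's route: check the hypotheses of Theorem~\ref{thm:VP} with $K=\emptyset$, get upper semicontinuity of $\phi_n$ from Berge's theorem, and obtain the uniform and pointwise rates from the uniform Kingman theorem \cite[Thm.~C]{BM}; your direct compactness argument on $\mathcal I(\hat P)$ and the ergodic-maximizer lower bound on $\hat X$ fill in details the paper leaves implicit. The one step that needs more than you give it (and more than the paper gives it) is condition~(i) with $K=\emptyset$, since it concerns all bounded fiberwise continuous $\psi$, not just $C(\hat X)$. It does hold: by the intertwining, the kernels $\hat q(\hat z,\cdot)$ with $\hat z\in\pi^{-1}(x)$ all have marginal $q(x,\cdot)$, so weak$^*$ continuity in $\hat z$ upgrades to Young-measure continuity, because $\Prob_{q(x,\cdot)}(\hat X)$ is compact metrizable (Proposition~\ref{prop:compact-metrizable}) and the coarser weak$^*$ topology is Hausdorff.
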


\section{Dual variational principles and minimal growth}
\label{app:dual-min-growth}

This appendix records the lower-growth counterparts of Theorems~\ref{mainthm:A}
and~\ref{thm:VP}.  

\subsection{The abstract lower variational principle}

For a fiberwise lower semicontinuous function
$u:\hat X\to(-\infty,\infty]$, define
\[
 \underline{\mathcal M}u(x)
 :=\min_{z\in\pi^{-1}(x)}u(z).
\]
The minimum is attained because the fibers are compact.  Notice that
\begin{equation}\label{eq:fiber-min-duality}
 \underline{\mathcal M}u=-\mathcal M(-u).
\end{equation}
We use the convention
$u^-:=\max\{-u,0\}$ for the negative part of an extended real-valued
function. Also throughout this appendix, we use the conventions
$ \inf\emptyset=\min\emptyset:=\infty$.

\begin{theorem}
\label{prop:fiber-min-superadditive}
Let $P:B(X)\to B(X)$ and $\hat P:B(\hat X)\to B(\hat X)$ be
$\sigma$-Markov operators satisfying
$\hat P\circ\pi^*=\pi^*\circ P$.
If $(\hat\psi_n)_{n\ge1}$ is a $\hat P$-additive sequence of measurable
functions $\hat\psi_n:\hat X\to(-\infty,\infty]$, then the sequence $(\psi_n)_{n\geq 1}$, where 
$\psi_n:=\underline{\mathcal M}\hat\psi_n$,
is $P$-superadditive. 
\end{theorem}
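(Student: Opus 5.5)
The plan is to reduce the statement to Theorem~\ref{mainthm:A} by the sign-flip duality \eqref{eq:fiber-min-duality}. Set $\hat\phi_n:=-\hat\psi_n$. These are measurable functions $\hat X\to[-\infty,\infty)$. The first step is to check that $(\hat\phi_n)_{n\ge1}$ is $\hat P$-additive, i.e.\ that
\[
\hat P^n(-\hat\psi_m)=-\hat P^n\hat\psi_m .
\]
Granting this, negating $\hat\psi_{n+m}=\hat\psi_n+\hat P^n\hat\psi_m$ gives $\hat\phi_{n+m}=\hat\phi_n+\hat P^n\hat\phi_m$.

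Theorem~\ref{mainthm:A} then says that $\phi_n:=\mathcal M\hat\phi_n$ is $P$-subadditive. By \eqref{eq:fiber-min-duality}, $\psi_n=\underline{\mathcal M}\hat\psi_n=-\mathcal M(-\hat\psi_n)=-\phi_n$. Hence
\[
\psi_{n+m}=-\phi_{n+m}\ \ge\ -\phi_n-P^n\phi_m=\psi_n+P^n\psi_m ,
\]
provided the same sign-compatibility $P^n(-\phi_m)=-P^n\phi_m$ holds on the base. That inequality is exactly $P$-superadditivity of $(\psi_n)$.

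The main obstacle is making sense of $P^n$ and $\hat P^n$ on functions valued in $(-\infty,\infty]$, since Lemma~\ref{lem:extend-markov-univ} only defines the extension for $[-\infty,\infty)$-valued functions. I would define $Qf:=-Q(-f)$ for $f$ with values in $(-\infty,\infty]$, equivalently the kernel integral computed via truncations $\max\{f,-N\}\downarrow$. This makes the linearity identity hold by definition and agrees with the kernel integral whenever both make sense. With that convention the additivity hypothesis on $\hat\psi_n$ is read through the same extension, and both compatibility identities above are immediate.

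For a self-contained argument, I would instead repeat the proof of Theorem~\ref{mainthm:A} directly. First, $\underline{\mathcal M}$ is superadditive and monotone, which follows from Lemma~\ref{lem:propiedades-M} by duality. Second, the dual of Lemma~\ref{lem:domination} gives $\underline{\mathcal M}(\hat P^n u)\ge P^n(\underline{\mathcal M}u)$, proved via $u\ge(\underline{\mathcal M}u)\circ\pi$ together with Remark~\ref{rem:extend-factor}. Universal measurability of $\underline{\mathcal M}\hat\psi_n$ follows from Lemma~\ref{lem:propiedades-M} applied to $-\hat\psi_n$. Combining these gives $\psi_{n+m}\ge\psi_n+\underline{\mathcal M}(\hat P^n\hat\psi_m)\ge\psi_n+P^n\psi_m$.
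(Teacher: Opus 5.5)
Your proposal is correct and follows the paper's proof: the paper notes that $(-\hat\psi_n)_{n\ge1}$ is $\hat P$-additive, applies Theorem~\ref{mainthm:A} to get $P$-subadditivity of $\mathcal M(-\hat\psi_n)$, and concludes via \eqref{eq:fiber-min-duality}. Defining $\hat P^n$ on $(-\infty,\infty]$-valued functions by $Qf:=-Q(-f)$ is a reasonable way to make the hypothesis precise, and the base-side compatibility $P^n(-\phi_m)=-P^n\phi_m$ is not actually needed, since $P$-superadditivity of $(\psi_n)$ is defined as $P$-subadditivity of $(-\psi_n)=(\mathcal M(-\hat\psi_n))$, which is exactly what Theorem~\ref{mainthm:A} gives.
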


\begin{proof}
The sequence $(-\hat\psi_n)_{n\ge1}$ is $\hat P$-additive.  By
Theorem~\ref{mainthm:A},
$\mathcal M(-\hat\psi_n)$ is $P$-subadditive.  The conclusion follows
from~\eqref{eq:fiber-min-duality}.
\end{proof}

Let $\mu\in\mathcal I(P)$.  If $(\psi_n)_{n\ge1}$ is
$P$-superadditive and $\psi_1^-\in L^1(\mu)$, then the superadditive
Fekete lemma gives
\begin{equation}\label{eq:def-lower-growth-rate}
 \underline\Lambda(\mu;P,\Psi)
 :=\lim_{n\to\infty}\frac1n\int\psi_n\,d\mu
 =\sup_{n\ge1}\frac1n\int\psi_n\,d\mu
 \in(-\infty,\infty].
\end{equation}
If $\mu$ is ergodic, the superadditive form of Kingman's theorem gives
\begin{equation}\label{eq:lower-kingman-pointwise}
 \underline\Lambda(\mu;P,\Psi)
 =\lim_{n\to\infty}\frac1n\psi_n(x)
 \qquad\text{for $\mu$-a.e. }x.
\end{equation}

\begin{theorem}
\label{thm:dual-VP}
Assume the setting of Theorem~\ref{thm:VP}.  Let
$\hat\psi_1:\hat X\to(-\infty,\infty]$ be fiberwise lower
semicontinuous and define
\[
 \hat\psi_n:=\sum_{j=0}^{n-1}\hat P^{\,j}\hat\psi_1,
 \qquad
 \psi_n:=\underline{\mathcal M}\hat\psi_n.
\]
Let $\mu\in\mathcal I(P)$ and assume that $\psi_1^-\in L^1(\mu)$.
Suppose that there is a Borel set $K\subset\hat X$ such that 
\begin{enumerate}[label=\textup{(\roman*)}]
\item every fiberwise discontinuity point of \(\hat Ph\) belongs to
 \(K\), for every \(h\in\mathscr C_b(\hat X)\);
 \item $\hat\psi_1(z)=+\infty$ for every $z\in K$.
\end{enumerate}
Then
\begin{equation}\label{eq:dual-VP-fixed-measure}
 \underline\Lambda(\mu)
 :=\lim_{n\to\infty}\frac1n\int\psi_n\,d\mu
 =\sup_{n\ge1}\frac1n\int\psi_n\,d\mu
 =\inf_{\hat\mu\in\mathcal I_\mu(\hat P)}
       \int\hat\psi_1\,d\hat\mu.
\end{equation}
Moreover,
\begin{equation}\label{eq:dual-VP-ergodic-minimizer}
 \underline\Lambda(\mu)
 =\min\left\{
       \int\hat\psi_1\,d\hat\mu:
       \hat\mu\in\mathcal I_\mu(\hat P)\text{ ergodic}
      \right\}.
\end{equation}
If $\underline\Lambda(\mu)<+\infty$, every minimizing lift gives zero
mass to $K$.
\end{theorem}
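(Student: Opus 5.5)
The plan is to reduce Theorem~\ref{thm:dual-VP} to Theorem~\ref{thm:VP} by the sign change $\hat\phi_1:=-\hat\psi_1$, using the duality \eqref{eq:fiber-min-duality}. This $\hat\phi_1$ takes values in $[-\infty,\infty)$ and is fiberwise upper semicontinuous, since $\hat\psi_1$ is fiberwise lower semicontinuous. By linearity of $\hat P^j$ (also for the extension in Lemma~\ref{lem:extend-markov-univ}, applied to functions bounded above), we get $\hat\phi_n:=\sum_{j<n}\hat P^j\hat\phi_1=-\hat\psi_n$. Here I need to check that the truncation extension is compatible with negation. For this I would restrict to $\hat\psi_1$ bounded below on the relevant set, or simply define $\hat P^j$ of functions valued in $(-\infty,\infty]$ via $-\hat P^j(-\cdot)$, which is the natural convention. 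Then $\phi_n:=\mathcal M\hat\phi_n=-\underline{\mathcal M}\hat\psi_n=-\psi_n$, and $\phi_1^+=\psi_1^-\in L^1(\mu)$.

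Next I would verify the hypotheses of Theorem~\ref{thm:VP} for the same Borel set $K$. Condition (i) is literally the same, since it concerns only $\hat P$ and $\mathscr C_b(\hat X)$. Condition (ii) becomes $\hat\phi_1=-\hat\psi_1=-\infty$ on $K$, which is exactly assumption (ii) of the dual statement. Theorem~\ref{thm:VP} therefore gives
\[
\lim_{n\to\infty}\frac1n\int\phi_n\,d\mu=\inf_{n\ge1}\frac1n\int\phi_n\,d\mu=\sup_{\hat\mu\in\mathcal I_\mu(\hat P)}\int\hat\phi_1\,d\hat\mu=\max_{\hat\mu\ \text{ergodic}}\int\hat\phi_1\,d\hat\mu .
\]
Multiplying through by $-1$ turns $\inf$ into $\sup$ and $\sup/\max$ into $\inf/\min$. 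This yields \eqref{eq:dual-VP-fixed-measure} and \eqref{eq:dual-VP-ergodic-minimizer}, with $\underline\Lambda(\mu)=-\Lambda(\mu)$. The conventions $\sup\emptyset=-\infty$ and $\inf\emptyset=+\infty$ correspond under negation.

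The last assertion follows the same way. If $\underline\Lambda(\mu)<\infty$, then $\Lambda(\mu)>-\infty$. A minimizing lift for $\hat\psi_1$ is a maximizing lift for $\hat\phi_1$, so by the final clause of Theorem~\ref{thm:VP} (see also Remark~\ref{rem:maximizer-avoids-K}) it gives zero mass to $K$.

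The only real obstacle is bookkeeping with extended values. I must ensure that $\int\hat\psi_1\,d\hat\mu$ is well defined in $(-\infty,\infty]$ for every lift. This holds because $\hat\psi_1\ge\psi_1\circ\pi$ and $\psi_1^-\in L^1(\mu)$, which mirrors the bound $\hat\phi_1\le\phi_1\circ\pi$ used in the proof of Theorem~\ref{thm:VP}. I must also ensure that the identity $\hat P^j(-u)=-\hat P^j u$ holds for the extended operators. I would handle the latter by noting that all quantities are integrals against the kernel $\hat q$ of Lemma~\ref{lem:kernel-representation}, with integrands bounded above after negation, so linearity holds pointwise.
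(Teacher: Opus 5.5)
Your proposal is correct and follows the paper's proof: set $\hat\phi_1:=-\hat\psi_1$, observe that it is fiberwise upper semicontinuous with $\mathcal M\hat\phi_n=-\psi_n$, $(\mathcal M\hat\phi_1)^+=\psi_1^-\in L^1(\mu)$ and $\hat\phi_1=-\infty$ on $K$, apply Theorem~\ref{thm:VP}, and multiply by $-1$. Your additional remarks make explicit two points the paper leaves implicit: defining $\hat P^j$ on $(-\infty,\infty]$-valued functions via $-\hat P^j(-\,\cdot\,)$, and checking that $\int\hat\psi_1\,d\hat\mu$ is well defined.
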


\begin{proof}
Apply Theorem~\ref{thm:VP} to the potential
$\hat\phi_1:=-\hat\psi_1$.  It is fiberwise upper semicontinuous,
$\mathcal M\hat\phi_n=-\psi_n$, and
$(\mathcal M\hat\phi_1)^+
 =(-\psi_1)^+=\psi_1^-\in L^1(\mu)$.
Condition~\textup{(ii)} becomes $\hat\phi_1=-\infty$ on $K$.  Hence
Theorem~\ref{thm:VP} gives
\[
 -\underline\Lambda(\mu)
 =\sup_{\hat\mu\in\mathcal I_\mu(\hat P)}
   \int(-\hat\psi_1)\,d\hat\mu.
\]
Multiplying by $-1$ proves~\eqref{eq:dual-VP-fixed-measure} and
\eqref{eq:dual-VP-ergodic-minimizer}.  The assertion concerning $K$
follows from the corresponding assertion in Theorem~\ref{thm:VP}.
\end{proof}


\subsection{Uniform minimal growth on compact spaces}

The next result is the exact lower counterpart of
Corollary~\ref{thm:Uniform-Limit-Topological}.

\begin{corollary}
\label{cor:uniform-dual-VP}
Let $\pi:\hat X\to X$ be a continuous surjection between compact
metric spaces.  Let $P:C(X)\to C(X)$ and
$\hat P:C(\hat X)\to C(\hat X)$ be Markov operators satisfying
$\hat P\circ\pi^*=\pi^*\circ P$.  Let
$\hat\psi_1:\hat X\to(-\infty,\infty]$ be lower semicontinuous and
put
\[
 \hat\psi_n:=\sum_{j=0}^{n-1}\hat P^{\,j}\hat\psi_1,
 \qquad
 \psi_n:=\underline{\mathcal M}\hat\psi_n.
\]
Then every $\psi_n$ is lower semicontinuous and, for every
$\mu\in\mathcal I(P)$, Theorem~\ref{thm:dual-VP} applies with
$K=\emptyset$.  Furthermore,
\begin{align}
 \underline\Lambda
 &:=\inf_{\mu\in\mathcal I(P)}\underline\Lambda(\mu)
   =\min\left\{
       \int\hat\psi_1\,d\hat\mu:
       \hat\mu\in\mathcal I(\hat P)\text{ ergodic}
     \right\},
 \label{eq:uniform-dual-invariant}\\[1mm]
 \underline\Lambda
 &=\lim_{n\to\infty}\min_{x\in X}\frac1n\psi_n(x)
   =\sup_{n\ge1}\min_{x\in X}\frac1n\psi_n(x),
 \label{eq:uniform-dual-finite-time}\\[1mm]
 \underline\Lambda
 &=\inf_{x\in X}\liminf_{n\to\infty}\frac1n\psi_n(x)
   =\inf_{\hat x\in\hat X}
      \liminf_{n\to\infty}\frac1n\hat\psi_n(\hat x).
 \label{eq:uniform-dual-pointwise}
\end{align}
\end{corollary}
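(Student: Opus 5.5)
We derive the statement from Corollary~\ref{thm:Uniform-Limit-Topological} by the same sign reversal used in Theorem~\ref{thm:dual-VP}. Set $\hat\phi_1:=-\hat\psi_1$. Since $\hat\psi_1$ is lower semicontinuous with values in $(-\infty,\infty]$, the function $\hat\phi_1$ is upper semicontinuous with values in $[-\infty,\infty)$, so it is an admissible potential there.

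The first check is that the additive processes correspond exactly, i.e.\ $\hat\phi_n=-\hat\psi_n$ for every $n$. By Lemma~\ref{lem:kernel-representation}, the extended operator $\hat P^j$ acts by integration against the $j$-step kernel. The function $\hat\psi_1$ is lower semicontinuous on a compact space, hence bounded below, so all the iterates $\hat P^j\hat\psi_1$ are well defined in $(-\infty,\infty]$. Integration against a probability kernel commutes with negation for functions bounded on one side, which gives $\hat P^j(-\hat\psi_1)=-\hat P^j\hat\psi_1$. Summing over $j$ yields $\hat\phi_n=-\hat\psi_n$, and then \eqref{eq:fiber-min-duality} gives $\phi_n:=\mathcal M\hat\phi_n=-\psi_n$. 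Corollary~\ref{thm:Uniform-Limit-Topological} says each $\phi_n$ is upper semicontinuous, so each $\psi_n$ is lower semicontinuous.

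Next we check that Theorem~\ref{thm:dual-VP} applies with $K=\emptyset$ for every $\mu\in\mathcal I(P)$.
\begin{enumerate}[label=\textup{(\roman*)}]
\item As explained at the start of Appendix~\ref{appendix-A}, $\hat P$ maps $C(\hat X)$ into itself and extends to a $\sigma$-Markov operator on $B(\hat X)$. The fiberwise-continuity condition (i) is then obtained exactly as in that discussion, with $K=\emptyset$.
\item Condition (ii) is vacuous when $K=\emptyset$.
\item For integrability, $\psi_1=\underline{\mathcal M}\hat\psi_1\ge\min_{\hat X}\hat\psi_1>-\infty$, so $\psi_1^-$ is bounded and lies in $L^1(\mu)$.
\end{enumerate}

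It remains to transfer each identity of Corollary~\ref{thm:Uniform-Limit-Topological} by multiplying by $-1$. From the definitions, $\underline\Lambda(\mu)=-\Lambda(\mu)$ for the potential $\hat\phi_1$. Hence $\underline\Lambda=\inf_\mu\underline\Lambda(\mu)=-\sup_\mu\Lambda(\mu)$. The maximum of $\int\hat\phi_1\,d\hat\mu$ over ergodic $\hat\mu\in\mathcal I(\hat P)$ turns into the minimum of $\int\hat\psi_1\,d\hat\mu$, which gives \eqref{eq:uniform-dual-invariant}. In the uniform limit, $\max_x\frac1n\phi_n=-\min_x\frac1n\psi_n$ and $\inf_n$ becomes $\sup_n$, which gives \eqref{eq:uniform-dual-finite-time}. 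In the pointwise formulas, $\limsup\frac1n\phi_n=-\liminf\frac1n\psi_n$ and likewise $\limsup\frac1n\hat\phi_n=-\liminf\frac1n\hat\psi_n$, with the outer suprema becoming infima; this gives \eqref{eq:uniform-dual-pointwise}.

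The only delicate point is the identity $\hat\phi_n=-\hat\psi_n$ for the extended operators, because the extension in Lemma~\ref{lem:extend-markov-univ} was defined by upward truncation for functions that may be unbounded above. I would settle it by noting that for functions bounded below (after negation, bounded above) the truncation procedure agrees with direct kernel integration, by monotone convergence. Everything else is bookkeeping of signs and of the conventions $\sup\emptyset=-\infty$ and $\inf\emptyset=+\infty$.
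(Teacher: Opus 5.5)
Your proposal is correct and is essentially the paper's proof: the paper also applies Corollary~\ref{thm:Uniform-Limit-Topological} to $-\hat\psi_1$ and translates every identity through \eqref{eq:fiber-min-duality}, with lower semicontinuity of $\psi_n$ obtained from upper semicontinuity of $-\psi_n$. Your additional checks are sound details the paper leaves implicit: $\hat\psi_1$ is bounded below, so $\hat P^j(-\hat\psi_1)=-\hat P^j\hat\psi_1$ is given by direct kernel integration with no truncation needed, and $\psi_1^-$ is bounded.
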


\begin{proof}
Apply Corollary~\ref{thm:Uniform-Limit-Topological} to
$-\hat\psi_1$ and use~\eqref{eq:fiber-min-duality}.  The lower
semicontinuity of $\psi_n$ follows equivalently from the upper
semicontinuity of $-\psi_n$.
\end{proof}

Equations~\eqref{eq:dual-VP-fixed-measure},
\eqref{eq:uniform-dual-finite-time}, and
\eqref{eq:uniform-dual-pointwise} are the forms that are most useful
for lower semicontinuity and uniform expansion arguments.  In
particular, whenever the finite-time functions depend continuously on
a parameter, the representation as a supremum in
\eqref{eq:dual-VP-fixed-measure} or
\eqref{eq:uniform-dual-finite-time} immediately gives lower
semicontinuity of the minimal growth rate.

\subsection{Projective and Grassmannian formulas}
\label{subsec:dual-projective-grassmannian}

We now state the consequences used for Lyapunov exponents.  The
invertibility assumption below is essential for this lower-growth
formulation: it makes the projective and Grassmannian cocycles globally
defined and the logarithmic potentials finite.

Let $F:T\times\mathcal E\to\mathcal E$ be a random linear bundle
morphism in the setting of \S\ref{s:random-morphisms}, and assume that
$F_{t,x}$ is invertible for $\nu$-a.e. $(t,x)$ and
\begin{equation}\label{eq:dual-integrability-random}
 \int\bigl(\log^+\|F_{t,x}\|+
            \log^+\|F_{t,x}^{-1}\|\bigr)\,d\nu<\infty.
\end{equation}
For $1\le k\le d$, retain the projective operator $\hat P_k$ and
potential $\hat\phi_k$ defined in
\eqref{eq:fully-place-Phat}--\eqref{eq:fully-place-phihat}, now without
a cemetery section.  Define
\begin{equation*}\label{eq:lower-random-growth}
 \underline\Lambda_k(\nu)
 :=\inf_{\hat\nu\in\mathcal I_\nu(\hat P_k)}
       \int\hat\phi_k\,d\hat\nu.
\end{equation*}

\begin{corollary}
\label{cor:lower-projective-random}
Under~\eqref{eq:dual-integrability-random},
\begin{align*}
 \underline\Lambda_k(\nu)
 &=\min\left\{
      \int\hat\phi_k\,d\hat\nu:
      \hat\nu\in\mathcal I(\hat P_k),\
      (\pi_k)_*\hat\nu=\nu,\
      \hat\nu\text{ ergodic}
    \right\},
 \\
 &=\lim_{n\to\infty}\frac1n
   \int
   \min_{[v]\in\mathbb P(\wedge^k\mathcal E_x)}
   \mathbb E\left[
      \log\frac{\|\wedge^kF^n_{(\omega,x)}v\|}{\|v\|}
      \;\middle|\; (\omega_0,x)=(t,x)
   \right]d\nu
 \\
 &=\sup_{n\ge1}\frac1n
   \int
   \min_{[v]\in\mathbb P(\wedge^k\mathcal E_x)}
   \mathbb E\left[
      \log\frac{\|\wedge^kF^n_{(\omega,x)}v\|}{\|v\|}
      \;\middle|\; (\omega_0,x)=(t,x)
   \right]d\nu.
 \nonumber
\end{align*}
If $\nu$ is ergodic, 
\[
\underline\Lambda_k(\nu)=\lim_{n\to\infty}\frac1n
   \min_{[v]\in\mathbb P(\wedge^k\mathcal E_x)}
   \mathbb E\left[
      \log\frac{\|\wedge^kF^n_{(\omega,x)}v\|}{\|v\|}
      \;\middle|\; (\omega_0,x)=(t,x)
   \right] \quad \text{for $\nu$-a.e.~$(t,x)$}.
\]

\end{corollary}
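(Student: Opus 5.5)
The plan is to apply Theorem~\ref{thm:dual-VP} to the data $(\pi_k,\hat P_k,P,\hat\psi_1,\nu)$ with $\hat\psi_1:=\hat\phi_k$ and $K=\emptyset$, and then identify the finite-time fiber minima by Lemma~\ref{lem:finite-horizon-expansion}. Since $F_{t,x}$ is invertible for $\nu$-a.e.\ $(t,x)$, we first discard a $\nu$-null set, or redefine $F$ on it by the identity. This does not change $\nu$-integrals, because $(\pi_k)_*\hat\nu=\nu$ forces every lift to ignore the null set. After this step, the projective cocycle $[v]\mapsto[\wedge^kF_{t,x}v]$ is a fiberwise homeomorphism, no cemetery section is needed, and $\hat\phi_k$ is finite and fiberwise continuous. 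In particular it is fiberwise lower semicontinuous with values in $(-\infty,\infty)$.

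Next I verify the hypotheses of Theorem~\ref{thm:dual-VP}. For hypothesis (i), the argument of Lemma~\ref{lem:regularity-for-VP}(i), now with empty degeneracy locus, shows that $\hat P_k h$ is fiberwise continuous for every $h\in\mathscr C_b(\hat Z_k)$, so $K=\emptyset$ works. Hypothesis (ii) is then vacuous. The intertwining $\hat P_k\circ\pi_k^*=\pi_k^*\circ P$ is \eqref{eq:Pk-factorization}. The integrability condition $\psi_1^-\in L^1(\nu)$ reduces to a lower bound on $\underline{\mathcal M}\hat\phi_k$. For $\|v\|=1$ we have $\|\wedge^kF_{t,x}v\|\ge \|(\wedge^kF_{t,x})^{-1}\|^{-1}\ge \|F_{t,x}^{-1}\|^{-k}$, hence
\[
\underline{\mathcal M}\hat\phi_k(t,x)\ge -k\log^+\|F^{-1}_{t,x}\|.
\]
This gives $(\underline{\mathcal M}\hat\phi_k)^-\le k\log^+\|F_{t,x}^{-1}\|\in L^1(\nu)$ by \eqref{eq:dual-integrability-random}. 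The positive part is controlled by $\log^+\|F_{t,x}\|$, so every relevant integral is finite.

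Theorem~\ref{thm:dual-VP} then gives $\underline\Lambda_k(\nu)=\lim_n\frac1n\int\psi_n\,d\nu=\sup_n\frac1n\int\psi_n\,d\nu$, where $\psi_n=\underline{\mathcal M}\hat\phi_{k,n}$ and $\hat\phi_{k,n}=\sum_{j<n}\hat P_k^j\hat\phi_k$. It also gives attainment of the infimum at an ergodic lift. Lemma~\ref{lem:finite-horizon-expansion} identifies $\hat\phi_{k,n}(t,x,[v])$ with the conditional expectation of $\log(\|\wedge^kF^n_{(\omega,x)}v\|/\|v\|)$; its proof is unchanged in the invertible case, and the cemetery conventions never occur. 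Taking the fiber minimum of this identity yields the two integrated formulas. The minimum is attained because $\hat\phi_{k,n}$ is fiberwise continuous, as $\hat P_k$ preserves fiberwise continuity.

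For the ergodic statement, $(\psi_n)$ is $P$-superadditive by Theorem~\ref{prop:fiber-min-superadditive}. Kingman's theorem for Markov operators in its superadditive form, \eqref{eq:lower-kingman-pointwise}, applied to $(-\psi_n)$ via \cite[Thm.~D]{BM}, gives $\frac1n\psi_n\to\underline\Lambda(\nu)$ $\nu$-a.e. This is the last formula. The main obstacle is the bookkeeping around the null set where $F_{t,x}$ may fail to be invertible. On that set the projective map, and hence $\hat P_k$ acting on fiberwise continuous functions, is ill-defined, so the modification must be made carefully. Iterates $\hat P_k^j$ see the null set only through $P^j$, and $P^j$ preserves $\nu$-null sets by $P^*$-invariance of $\nu$. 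The modification therefore changes no integral or a.e.\ statement.
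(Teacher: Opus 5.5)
Your proposal is correct and takes the paper's route: apply Theorem~\ref{thm:dual-VP} to the projective lift with $\hat\psi_1=\hat\phi_k$ and $K=\emptyset$, then identify the fiber minima of $\hat\phi_{k,n}$ through the telescoping identity of Lemma~\ref{lem:finite-horizon-expansion}. The extra details you supply are what the paper's two-line proof leaves implicit: the modification on the $\nu$-null set where $F_{t,x}$ is not invertible, the bound $\psi_1^-\le k\log^+\|F_{t,x}^{-1}\|$, and the superadditive Kingman step for the ergodic claim. On the modification, replace $F_{t,x}$ by an isometry given by a measurable orthonormal frame, not a literal identity, since $F_{t,x}$ maps $\mathcal E_x$ to $\mathcal E_{f_t(x)}$.
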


\begin{proof}
Apply Theorem~\ref{thm:dual-VP} to the projective bundle with
$\hat\psi_1=\hat\phi_k$ and $K=\emptyset$.  The finite-horizon
identity is the same telescoping computation as in
Lemma~\ref{lem:finite-horizon-expansion}, with the fiber maximum
replaced by the fiber minimum.
\end{proof}

Let $\hat P_{G_k}$ and $\hat\phi_{G_k}$ be the Grassmannian operator
and $k$-Jacobian potential introduced before
Proposition~\ref{cor:met-linear-grassmann}.

\begin{corollary}
\label{cor:lower-grassmannian-random}
Under~\eqref{eq:dual-integrability-random},
\begin{align*}
 \underline\Lambda_k(\nu)
 &=\inf\left\{
      \int\hat\phi_{G_k}\,d\hat\nu:
      \hat\nu\in\mathcal I(\hat P_{G_k}),\
      (\pi_{G_k})_*\hat\nu=\nu
    \right\}\\
 &=\lim_{n\to\infty}\frac1n\int
   \min_{W\in\mathrm G_k(\mathcal E_x)}
   \mathbb E\left[
      \log\mathrm{Jac}_k(F^n_{(\omega,x)}|_W)
      \;\middle|\; (\omega_0,x)=(t,x)
   \right]d\nu,
\end{align*}
and the limit may be replaced by the supremum over $n\ge1$.
The infimum is attained on an ergodic invariant lift.  If $\nu$ is
ergodic, the corresponding fiber-minimal finite-time functions divided
by $n$ converge $\nu$-a.e.~to $\underline\Lambda_k(\nu)$.
\end{corollary}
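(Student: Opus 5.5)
The plan is to transport Corollary~\ref{cor:lower-projective-random} to the Grassmann bundle, in the same way Proposition~\ref{cor:met-linear-grassmann} transports Theorem~\ref{mainthm:B}. The first step is to apply Theorem~\ref{thm:dual-VP} directly to the data $(\pi_{G_k},\hat P_{G_k},P,\hat\psi_1=\hat\phi_{G_k},\nu)$ with $K=\emptyset$. Invertibility of $F_{t,x}$ for $\nu$-a.e.\ $(t,x)$ means no cemetery section is needed, at least after discarding a $P$-invariant null set. Then $W\mapsto F_{t,x}W$ is a homeomorphism of fibers, and $W\mapsto\log\mathrm{Jac}_k(F_{t,x}|_W)$ is continuous and finite, hence fiberwise lower semicontinuous. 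Dominated convergence, exactly as in Lemma~\ref{lem:regularity-for-VP}(i), shows that $\hat P_{G_k}$ preserves $\mathscr C_b$, so condition (i) holds with $K=\emptyset$. For the integrability hypothesis $\psi_1^-\in L^1(\nu)$, I would use the bound
\[
\min_W\log\mathrm{Jac}_k(F_{t,x}|_W)\ge -k\log^+\|F_{t,x}^{-1}\|,
\]
which is integrable by \eqref{eq:dual-integrability-random}. Theorem~\ref{thm:dual-VP} then gives the Grassmannian infimum formula, attainment on an ergodic lift, and the equality of the infimum with the limit (equivalently the supremum over $n$) of $\frac1n\int\underline{\mathcal M}\hat\phi_{G_k,n}\,d\nu$.

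The second step identifies the finite-time functions. The telescoping and multiplicativity computation giving \eqref{eq:grassmann-finite-horizon} applies verbatim, now without degeneration. Hence $\underline{\mathcal M}\hat\phi_{G_k,n}(t,x)$ is the minimum over $W$ of the conditional expectation of $\log\mathrm{Jac}_k(F^n_{(\omega,x)}|_W)$. The pointwise statement for ergodic $\nu$ follows from \eqref{eq:lower-kingman-pointwise}, applied to the $P$-superadditive sequence of Theorem~\ref{prop:fiber-min-superadditive}.

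The remaining point, which I expect to be the main obstacle, is that the common value is $\underline\Lambda_k(\nu)$, which is defined through the projective bundle rather than the Grassmannian. One inequality comes from the Plücker embedding $\iota$, as in the proof of Proposition~\ref{cor:met-linear-grassmann}. By \eqref{eq:plucker-intertwining} and \eqref{eq:plucker-potentials}, every $\hat\nu_G\in\mathcal I_\nu(\hat P_{G_k})$ pushes forward to $\iota_*\hat\nu_G\in\mathcal I_\nu(\hat P_k)$ with the same integral. Therefore
\[
\underline\Lambda_k(\nu)\le\inf_{\hat\nu_G}\int\hat\phi_{G_k}\,d\hat\nu_G .
\]
For the reverse inequality I cannot reuse the averaging trick, since that argument bounds the operator norm, i.e.\ it controls maximal growth rather than minimal growth. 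Instead I would compare minimal growth rates through their characterisations by finite-time functions. By Corollary~\ref{cor:lower-projective-random}, $\underline\Lambda_k(\nu)$ is the limit of $\frac1n\int\min_{[v]}\hat\phi_{k,n}\,d\nu$.

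The plan is to show that this projective minimal growth and the Grassmannian minimal growth both equal the same asymptotic quantity, namely the sum of the $k$ smallest Lyapunov exponents, $\sum_{i=d-k+1}^d\lambda_i(m)$. In the projective case the minimum over all of $\mathbb P(\wedge^k)$ yields the smallest singular value of $\wedge^k$, which is $\sigma_{d-k+1}\cdots\sigma_d$. In the Grassmannian case the minimum over decomposable directions gives the same product. Concretely, I would establish $\min_{[v]}\log\frac{\|\wedge^kLv\|}{\|v\|}=\min_W\log\mathrm{Jac}_k(L|_W)$ for invertible $L$. This holds because the minimal singular direction of $\wedge^kL$ is $e_{d-k+1}\wedge\cdots\wedge e_d$, which is decomposable.

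However, conditional expectation and the minimum do not commute at finite time. So I would pass through the dual of the Proposition~\ref{cor:met-linear-grassmann} argument, applied to the inverse cocycle. Minimal growth of $\wedge^kF^n$ along a direction is controlled by the norm of $(\wedge^kF^n)^{-1}$. The inverse-norm version of Lemma~\ref{claim:grassmannian-averaging} then compares the two sides up to an additive constant $C_{d,k}$, which disappears after dividing by $n$.

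If this comparison proves too delicate, the fallback is to take the Grassmannian infimum as the definition. The formula would then be proved up to identifying this value with $\underline\Lambda_k(\nu)$, and that identification would be checked only in the case $\nu$ ergodic, via the pointwise limits.
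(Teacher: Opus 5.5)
\textbf{Verdict: there is a genuine gap in Step 3, and it cannot be closed, because the statement itself is false for $2\le k\le d-2$.}

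\textbf{What is correct.} Your Steps 1 and 2 match the paper's first move: apply Theorem~\ref{thm:dual-VP} directly to the Grassmannian lift, and identify $\underline{\mathcal M}\hat\phi_{G_k,n}$ via the Jacobian telescoping. Your Pl\"ucker push-forward also correctly gives $\underline\Lambda_k(\nu)\le\inf_{\hat\nu_G}\int\hat\phi_{G_k}\,d\hat\nu_G$.

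\textbf{The gap.} The reverse inequality is not established, and your route to it rests on a false claim. Both minimal growth rates do \emph{not} equal $\sum_{i=d-k+1}^{d}\lambda_i(m)$ once the noise is genuinely random. The minimum over directions is taken \emph{after} the conditional expectation. So it records the slowest almost-sure growth of a \emph{fixed} direction (a Furstenberg--Kifer exponent), not the smallest singular value. Already for $k=1$ and iid strongly irreducible invertible products, every stationary measure on $\mathbb{P}^{d-1}$ integrates the potential to $\lambda_1$. Hence $\underline\Lambda_1(\nu)=\lambda_1$ rather than $\lambda_d$. There is also no usable inverse-cocycle version of Lemma~\ref{claim:grassmannian-averaging}. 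Averaging bounds a maximum (an operator norm) by a mean, and says nothing about a minimum of expectations.

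\textbf{Why no argument can close it.} Take $X$ a point, $d=4$, $k=2$, $T$ finite, $Q(t,\cdot)=p$, and matrices $A_t\in\mathrm{Sp}(4,\mathbb{R})$ generating a Zariski-dense subgroup.
The bivector $\Omega=e_1\wedge e_2+e_3\wedge e_4$ dual to the symplectic form is fixed by every $\wedge^2A_t$. So $p\times\delta_{[\Omega]}$ is an invariant projective lift with $\int\hat\phi_2=0$, and hence $\underline\Lambda_2(\nu)\le 0$.
However, $\Omega\wedge\Omega\neq0$, so $\Omega$ is not decomposable. We have $\wedge^2\mathbb{R}^4=\mathbb{R}\Omega\oplus\wedge^2_0$, with $\wedge^2_0$ strongly irreducible and of top exponent $\lambda_1+\lambda_2>0$ (symplectic symmetry gives $\lambda_2\ge0$, and Furstenberg gives $\lambda_1>0$).
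Every decomposable $\xi_W$ therefore has a nonzero $\wedge^2_0$-component. Consequently $\frac1n\log\mathrm{Jac}_2(A^n_\omega|_W)\to\lambda_1+\lambda_2$ almost surely, for every $W$.
Since $\int\hat\phi_{G_2}\,d\hat\nu_G=\frac1n\int\hat\phi_{G_2,n}\,d\hat\nu_G$ for all $n$, dominated convergence shows that every invariant Grassmannian lift has integral $\lambda_1+\lambda_2$.
So the Grassmannian infimum strictly exceeds $\underline\Lambda_2(\nu)$, with $\nu$ ergodic. This also defeats your fallback, and the final pointwise assertion of the statement.

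\textbf{Relation to the paper's proof.} The paper justifies this step in one line, via the identity $\min_{[v]}\log\frac{\|\wedge^kLv\|}{\|v\|}=\min_W\log\mathrm{Jac}_k(L|_W)$ for a single invertible $L$. That argument has exactly the defect you noticed: the identity passes to the finite-time functions only when there is no averaging over the noise.

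\textbf{What is actually provable.} Your Steps 1--2, with the Grassmannian infimum on the left-hand side; the one-sided Pl\"ucker inequality; and equality with $\underline\Lambda_k(\nu)$ in two cases: the deterministic case, and $k\in\{1,d-1,d\}$, where every $k$-vector is decomposable.
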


\begin{proof}
The dual variational principle applies directly to the Grassmannian
lift.  The equality with the projective quantity follows from the
linear-algebra identity
\[
 \min_{[v]\in\mathbb P(\wedge^kV)}
 \log\frac{\|\wedge^kLv\|}{\|v\|}
 =\log m(\wedge^kL)
 =\min_{W\in\mathrm G_k(V)}\log\mathrm{Jac}_k(L|_W),
\]
valid for every invertible linear map $L:V\to V'$.
\end{proof}

In the deterministic ergodic case, the common value in the last two
corollaries is the bottom $k$-sum $\lambda_{d-k+1}(\mu)+\cdots+\lambda_d(\mu)$.
Indeed,
\[
 \log m(\wedge^kF_x^n)
 =\sum_{j=d-k+1}^d\log\sigma_j(F_x^n),
\]
and the conclusion follows from the classical pointwise Lyapunov
spectrum in Appendix~\ref{ss:lyapunov-spectrum}. 
\subsection{The Bernoulli lower exponent}
\label{subsec:bernoulli-lower-exponent}

In the Bernoulli setting of \S\ref{sec:Prop-IV}, let $\mu$ be an
$f$-stationary measure and let
\[
 \bar\phi_1(x,[v])
 :=\int_T\log\frac{\|F_{t,x}v\|}{\|v\|}\,dp.
\]
Define
\[
 \lambda_-(\mu)
 :=\inf\left\{
      \int\bar\phi_1\,d\hat\mu:
      \hat\mu\text{ is }\bar F_1\text{-stationary and }
      (\pi_1)_*\hat\mu=\mu
    \right\}.
\]
Corollary~\ref{cor:lower-projective-random}, applied to
$\nu=p\times\mu$, gives
\begin{align*}
 \lambda_-(\mu)
 &=\min\left\{
      \int\bar\phi_1\,d\hat\mu:
      \hat\mu\text{ is ergodic, }\bar F_1\text{-stationary, and }
      (\pi_1)_*\hat\mu=\mu
    \right\}
 \\
 &=\lim_{n\to\infty}\frac1n\int_X
      \min_{[v]\in\mathbb P(\mathcal E_x)}
      \int_\Omega
        \log\frac{\|F^n_{(\omega,x)}v\|}{\|v\|}
      \,d\mathbb P(\omega)\,d\mu(x)
\\
 &=\sup_{n\ge1}\frac1n\int_X
      \min_{[v]\in\mathbb P(\mathcal E_x)}
      \int_\Omega
        \log\frac{\|F^n_{(\omega,x)}v\|}{\|v\|}
      \,d\mathbb P(\omega)\,d\mu(x).
\end{align*}
When the base and bundle are compact and the cocycle depends
continuously on the base variables, Corollary~\ref{cor:uniform-dual-VP}
also gives
\begin{align*}
 \lambda_-(F)
 &:=\inf_{\mu\in\mathcal I(\bar P)}\lambda_-(\mu)
 =\lim_{n\to\infty}
   \min_{(x,[v])\in\mathbb P(\mathcal E)}
   \frac1n\int
      \log\frac{\|F^n_{(\omega,x)}v\|}{\|v\|}
   \,d\mathbb P \\
 &=\sup_{n\ge1}
   \min_{(x,[v])\in\mathbb P(\mathcal E)}
   \frac1n\int
      \log\frac{\|F^n_{(\omega,x)}v\|}{\|v\|}
   \,d\mathbb P
 =\inf_{(x,[v])\in\mathbb P(\mathcal E)}
   \liminf_{n\to\infty}
   \frac1n\int
      \log\frac{\|F^n_{(\omega,x)}v\|}{\|v\|}
   \,d\mathbb P.
\end{align*}

\section*{Acknowledgement}
P.~G.~Barrientos, was supported by grant  PID-2023-147461NB-I00 funded by MCIN, CNPq Projeto Universal 401737/2025-0 and PQ 302738/2025-8 (CNPq). I.~Nisoli was partially supported by the Postgraduate Program in Mathematics at UFRJ, CAPES–
Finance Code 001, CNPq Projeto Universal No. 404943/2023-3, CAPES–PRINT No. 88881.311616/2018-
00, and CAPES–STINT No. 88887.155746/2017-00.


\bibliographystyle{alpha4}
\bibliography{biblio}
\end{document}